\newtheorem{thm}{Theorem}[section]
\newtheorem{cor}[thm]{Corollary}
\newtheorem{lem}[thm]{Lemma}
\newtheorem{prop}[thm]{Proposition}
\theoremstyle{definition}
\theoremstyle{remark}
\newtheorem{rem}[thm]{Remark}
\numberwithin{equation}{section}
\newcommand{\R}{\mathbb R}
\newcommand{\Z}{\mathbb Z}
\newcommand{\C}{{\mathbb C}}
\renewcommand{\H}{\mathbb H}
\newcommand{\be}{\begin{equation}}
\newcommand{\ee}{\end{equation}}
\newcommand{\supp}{\operatorname{supp}}
\renewcommand{\Re}{\operatorname{Re}}
\renewcommand{\Im}{\operatorname{Im}}
\title[ Spherical means on the Heisenberg group]
{On the maximal function associated to the \\ 
 spherical means on the Heisenberg group}
\author[S. Bagchi, S. Hait, L. Roncal and S. Thangavelu]
{Sayan Bagchi \and Sourav Hait \and Luz Roncal \and Sundaram Thangavelu}
\address[S. Bagchi]{Stat-Math Unit, Indian Statistical Institute, Kolkata, India.}
\curraddr{Department of Mathematics and Statistics\\
 Indian Institute of Science Education and Research Kolkata\\
Mohanpur 741246, Nadia, West Bengal, India} 
\email{sayansamrat@gmail.com}
\address[S. Hait]{Department of Mathematics\\
 Indian Institute of Science\\
560 012 Bangalore, India} 
\email{souravhait@iisc.ac.in}
\address[L. Roncal]{BCAM - Basque Center for Applied Mathematics \\
48009 Bilbao, Spain and Ikerbasque, Basque Foundation for Science, 48011 Bilbao, Spain}
\email{lroncal@bcamath.org}
\address[S. Thangavelu]{Department of Mathematics\\
 Indian Institute of Science\\
560 012 Bangalore, India\\
and BCAM - Basque Center for Applied Mathematics \\
48009 Bilbao, Spain}
\email{veluma@iisc.ac.in}
\keywords{Spherical means, Heisenberg group, $L^p$-improving estimates, sparse domination, weighted theory.}
\subjclass[2010]{Primary: 43A80. Secondary: 22E25, 22E30, 42B15, 42B25.}
\dedicatory{Dedicated to the memory of Eli Stein} 
\begin{document}

\maketitle

\begin{abstract}
In this paper we deal with lacunary and full versions of the spherical maximal function on the Heisenberg group $\H^n$, for $n\ge 2$. 
By suitable adaptation of an approach developed by M. Lacey in the Euclidean case, we obtain sparse bounds for these maximal functions, which lead to new unweighted and weighted estimates. In particular, we deduce the $L^p$ boundedness, for $1<p<\infty$, of the lacunary maximal function associated to the spherical means on the Heisenberg group. In order to prove the sparse bounds, we establish $ L^p-L^q $ estimates for local (single scale) variants of the spherical means.  
\end{abstract}

\tableofcontents


\section{Introduction and main results}

A celebrated theorem of Stein \cite{Stein} proved in 1976 says that the spherical maximal function $ M $  defined by
$$M^{\operatorname{Euc}}_{\operatorname{full}}f(x) = \sup_{r>0} |f\ast \sigma_r(x)| =  \sup_{r>0}\Big| \int_{|y|=r} f(x-y) d\sigma_r(y)\Big|$$
is bounded on $ L^p(\R^n)$, $n \ge 3$, if and only if $ p > n/(n-1).$ Here $ \sigma_r $ stands for the normalised surface measure on the sphere $ S_r = \{ x\in \R^n: |x|=r\} $ in $ \R^n.$ The case $ n =2 $ was proved later by Bourgain \cite{Bourgain}.  As opposed to this,  in 1979, C. P. Calder\'on  \cite{C} proved that the lacunary spherical maximal function 
$$ 
M_{\text{lac}}^{\operatorname{Euc}}f(x) := \sup_{ j \in \Z} \Big|\int_{|y|=2^j} f(x-y) d\sigma_{2^j}(y)\Big|
$$
is bounded on $ L^p(\R^n) $ for all $ 1 < p < \infty $ for $ n \ge 2$. 

In a recent article, Lacey \cite{Lacey} revisited the spherical maximal function. Using a new approach, he managed to prove certain sparse bounds for these maximal functions which led him to obtain new weighted norm inequalities. One of the goals in this paper is to adapt the method of Lacey to obtain sparse bounds for certain spherical means on the Heisenberg group. As consequences, unweighted and weighted analogues of Calder\'on's theorem follow in this context. Up to our knowledge, these results are new.

Let $\H^n=\C^n\times \R$ be the $(2n+1)$-dimensional Heisenberg group with the group law
$$
(z,t)(w,s)=\Big(z+w,t+s+\frac12\Im z\cdot \overline{w}\Big). 
$$
Given a function $f$ on $\H^n$, consider the spherical means
\begin{equation}
\label{eq:defin}
A_rf(z,t):=f\ast \mu_r(z,t)=\int_{|w|=r}f\Big(z-w,t-\frac12\Im z\cdot \overline{w}\Big)\,d\mu_r(w)
\end{equation}
where $\mu_r$ is the normalised surface measure on the sphere $S_r=\{(z,0):|z|=r\}$ in $\H^n$. 
The maximal function associated to these spherical means was first studied by Nevo and Thangavelu in \cite{NeT}. Later, improving the results in \cite{NeT}, Narayanan and Thangavelu \cite{NaT}, and M\"uller and Seeger \cite{MS}, independently, proved the following sharp maximal theorem: the full maximal function 
$$ 
M_{\operatorname{full}}f(z,t) := \sup_{r>0} |A_rf(z,t)| 
$$ 
is bounded on $ L^p(\H^n), n \geq 2 $ if and only if $ p > (2n)/(2n-1).$

In this work we first consider the lacunary maximal function associated to the spherical means
$$
M_{\operatorname{lac}}f(z,t):=\sup_{j\in \Z}|A_{2^j}f(z,t)|, 
$$
 and prove the following result.
\begin{thm} 
\label{thm:spherical}
Assume that $ n \geq 2.$ Then the associated lacunary maximal funcion $ M_{\operatorname{lac}}$ is bounded on $ L^p(\H^n) $ for any $ 1 < p < \infty$.

\end{thm}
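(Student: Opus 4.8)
\emph{Proof strategy.} The plan is to run the classical Littlewood--Paley-plus-interpolation argument for lacunary spherical maximal functions, transplanted to $\H^n$ through the group Fourier transform and the special Hermite (Laguerre) calculus. For $\la\neq 0$ the operator $\widehat{\mu_r}(\la)$ is diagonal in the Hermite calculus, $\widehat{\mu_r}(\la)=\sum_{k\ge 0}R_k(\la,r)\,P_k(\la)$, where $P_k(\la)$ projects onto the $k$-th eigenspace of the $\la$-scaled Hermite operator and $R_k(\la,r)$ is an explicit Laguerre function of $|\la|r^{2}$ normalised so that $R_k(\la,0)=1$. Fixing a dyadic partition of unity $1=\sum_{\ell\ge 0}\beta_\ell$ in the joint-frequency variable $\tau_k(\la,r):=\big((2k+n)|\la|\big)^{1/2}r$, set
\[
\widehat{\mu^{\ell}_r}(\la):=\sum_{k\ge 0}\beta_\ell\big(\tau_k(\la,r)\big)R_k(\la,r)\,P_k(\la),\qquad A^{\ell}_r f:=f\ast\mu^{\ell}_r,
\]
so that $A_r=\sum_{\ell\ge 0}A^{\ell}_r$ and, by homogeneity, the kernel $\mu^{\ell}_r$ is the Heisenberg rescaling of $\mu^{\ell}_1$. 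It then suffices to prove, for each $1<p<\infty$, a decaying estimate $\big\|\sup_{j\in\Z}|A^{\ell}_{\delta^{j}}f|\big\|_{L^{p}(\H^{n})}\lesssim 2^{-\eps(p,n)\ell}\|f\|_{L^{p}(\H^{n})}$ with $\eps(p,n)>0$, and then to sum over $\ell$.

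The $L^{p}$ bound for $p$ near $1$, and crucially \emph{uniformly in} $\ell$, comes from domination by the Hardy--Littlewood maximal function. At $r=1$ the kernel $\mu^{\ell}_1$ is an $L^{1}$ function of mass $\lesssim 1$, essentially a bump of height $\sim 2^{2\ell}$ living in the Heisenberg $2^{-\ell}$-neighbourhood of $S_1$, a set of measure $\sim 2^{-2\ell}$. Covering that neighbourhood by $\sim 2^{2n\ell}$ Heisenberg balls of radius $2^{-\ell}$ displays $|\mu^{\ell}_1|$ as $\lesssim\sum_{i}c_i\,|B_i|^{-1}\mathbf 1_{B_i}$ with $\sum_i c_i\lesssim 1$ \emph{independently of} $\ell$; rescaling, the same is true of every $\mu^{\ell}_{\delta^{j}}$, and hence $\sup_{j}|f\ast\mu^{\ell}_{\delta^{j}}|\lesssim M_{\mathrm{HL}}f$, where $M_{\mathrm{HL}}$ is the non-isotropic Hardy--Littlewood maximal operator on $\H^{n}$. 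Consequently each sublinear operator $f\mapsto\sup_j|A^{\ell}_{\delta^{j}}f|$ is bounded on $L^{p_1}(\H^n)$ for every $p_1\in(1,\infty)$ with a bound independent of $\ell$, and trivially on $L^{\infty}$.

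The gain in $\ell$ is an $L^{2}$ square-function estimate, and this is where the hypothesis $0<\delta<\tfrac1{96}$ is used. With the Littlewood--Paley windows fixed, $\delta$ this small forces the sets $\{(k,\la):\beta_\ell(\tau_k(\la,\delta^{j}))\neq 0\}$ to be pairwise disjoint as $j$ runs over $\Z$ (for each fixed $\ell$), because a bounded multiplicative window in $\tau$ corresponds to a $j$-interval of length $<1$; hence the projections $Q^{\ell}_j(\la):=\sum_{k:\,\beta_\ell(\tau_k(\la,\delta^{j}))\neq 0}P_k(\la)$ satisfy $\sum_{j\in\Z}Q^{\ell}_j(\la)\le I$. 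Combining this orthogonality with the single-scale operator bound
\[
\|A^{\ell}_r\|_{L^{2}\to L^{2}}=\sup_{\la}\sup_{k}\big|\beta_\ell(\tau_k(\la,r))R_k(\la,r)\big|\ \lesssim\ 2^{-\ell(2n-1)/2},
\]
which is the Heisenberg counterpart of the decay $|\widehat{d\sigma_{\mathbf S^{2n-1}}}(\xi)|\lesssim|\xi|^{-(2n-1)/2}$ underlying the sharp $L^{p}$ theorem for $M_{\operatorname{full}}$ recalled above, Plancherel's theorem on $\H^{n}$ gives
\[
\Big\|\sup_{j\in\Z}|A^{\ell}_{\delta^{j}}f|\Big\|_{2}\le\Big\|\Big(\sum_{j\in\Z}|A^{\ell}_{\delta^{j}}f|^{2}\Big)^{1/2}\Big\|_{2}=\Big(\sum_{j\in\Z}\|A^{\ell}_{\delta^{j}}f\|_{2}^{2}\Big)^{1/2}\lesssim 2^{-\ell(2n-1)/2}\|f\|_{2}.
\]
Interpolating this with the uniform $L^{p_1}$ bound ($1<p_1<p$) when $1<p\le 2$, and with the $L^{\infty}$ bound when $p>2$, yields the geometric decay $2^{-\eps(p,n)\ell}$ for every $1<p<\infty$; summing in $\ell$ completes the proof. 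The standing assumption $n\ge 2$ is convenient here and genuinely needed for the companion $L^{p}$-improving and continuity estimates used elsewhere in the paper.

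The step with real content is the single-scale $L^{2}\to L^{2}$ bound $\|A^{\ell}_r\|_{2\to 2}\lesssim 2^{-\ell(2n-1)/2}$: it rests on the stationary-phase asymptotics of the Laguerre functions $R_k(\la,r)$ in their oscillatory range, with careful bookkeeping of the interplay between the Heisenberg dilations and the twist $\tfrac12\Im z\cdot\overline w$ in the group law. The remaining, largely routine, points are to make the operator-valued decomposition $\widehat{\mu_r}(\la)=\sum_\ell\widehat{\mu^{\ell}_r}(\la)$ and the Plancherel computation rigorous, to check that each $\mu^{\ell}_r$ is an integrable kernel with the bump structure used above, and to track the width of the Littlewood--Paley windows --- as processed through the Laguerre calculus --- that produces exactly the threshold $\delta<\tfrac1{96}$. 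Finally, we note that Theorem~\ref{thm:spherical} can alternatively be deduced from a sparse domination for $M_{\operatorname{lac}}$ obtained by Lacey's method from the $L^{p}$-improving and continuity properties of $A_r$; this is essentially the path the paper develops in order to reach its weighted consequences, and it likewise imposes a smallness condition on $\delta$.
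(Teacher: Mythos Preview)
Your approach is genuinely different from the paper's. The paper does not prove Theorem~\ref{thm:spherical} directly; it first establishes the sparse bound of Theorem~\ref{thm:mainSH} (via the $L^p$-improving property of Section~\ref{sec:Lp} and the continuity property of Section~\ref{sec:continuity}, fed into Lacey's recursion), and then reads off $L^p$-boundedness from Proposition~\ref{prop:un}. Your Calder\'on/Duoandikoetxea--Rubio de Francia programme---Littlewood--Paley decomposition in the sub-Laplacian spectrum, $L^2$ square-function decay from Laguerre asymptotics, interpolation---is a natural alternative and, if it goes through, is more direct for the unweighted result (though it does not by itself give the weighted inequalities that are the paper's real target).

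There is, however, a genuine gap in your ``uniform-in-$\ell$'' endpoint. Writing $|\mu^\ell_1|\lesssim\sum_i c_i\,|B_i|^{-1}\mathbf 1_{B_i}$ with small Koranyi balls $B_i$ centred on $S_1$ does \emph{not} give $|f\ast\mu^\ell_1|\lesssim M_{\mathrm{HL}}f$: the term $|B_i|^{-1}\int_{B_i}|f(xy^{-1})|\,dy$ is an average of $|f|$ over a set at Koranyi distance $\sim 1$ from $x$, not over a ball centred at $x$, and is therefore not controlled by $M_{\mathrm{HL}}f(x)$. If your inequality held, it would bound each $A_{\delta^j}f$ pointwise by $M_{\mathrm{HL}}f$ (take $\ell=0$, or sum the pieces), which already fails for Euclidean spherical means. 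What the covering actually yields is $|f\ast\mu^\ell_r|\lesssim 2^{c\ell}M_{\mathrm{HL}}f$ with $c>0$ (the ratio $|B(0,2r)|/|B_i|$); the classical route then interpolates this polynomially growing bound against the $L^2$ decay and uses the mean-zero of $\mu^\ell_r$ for $\ell\ge1$ together with vector-valued Calder\'on--Zygmund theory for the lacunary square function to reach every $p>1$. This can be carried out on $\H^n$, but it is not the one-line domination you wrote.

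Two smaller corrections. The threshold $\delta<\tfrac1{96}$ has nothing to do with Littlewood--Paley window widths: in the paper it is inherited from the Hyt\"onen--Kairema dyadic-grid construction on spaces of homogeneous type (Theorem~\ref{thm:homogrid}), and the remark after Lemma~\ref{lem:Hn} explains that any $\delta>0$ works after cosmetic changes; in your scheme the orthogonality only needs $\delta$ smaller than the multiplicative width of $\beta_\ell$. Also, the exponent $2^{-\ell(2n-1)/2}$ is not uniformly achieved by the Laguerre estimates of Lemma~\ref{lem:T}: in the Airy region one only gets $2^{-\ell(n-1+1/3)}$ (cf.\ Lemma~\ref{lem:uniform}), which is still summable but is not the Euclidean stationary-phase rate you invoke.
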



 We remark that another kind of spherical maximal function on the Heisenberg group has been considered by Cowling. In \cite{Cowling} he studied the maximal function associated to the spherical means taken over genuine Heisenberg spheres, i.e., averages over spheres defined in terms of a homogeneous norm on $ \H^n$,  and proved that it is bounded on $L^p(\H^n) $ for $ p > \frac{Q-1}{Q-2} $, where $ Q = (2n+2) $ is the homogeneous dimension of $ \H^n$. Recently, in \cite{GT}, lacunary maximal functions associated with these spherical means have been studied and it has been shown that they are bounded on $ L^p(\H^n)$ for all $ p > 1$. We remark in passing that the spherical means \eqref{eq:defin} are more singular, being supported on codimension two submanifolds as opposed to the one studied in \cite{Cowling}, which are supported on codimension one submanifolds. Even more singular spherical means have been studied in the literature, see e.g. \cite{TK}.

Theorem \ref{thm:spherical}, as well as certain weighted versions that are stated in Section \ref{sec:bound}, are standard consequences of the sparse bound in Theorem \ref{thm:mainSH}. Before stating the result let us set up the notation. As in the case of $ \R^n $, there is a notion of dyadic grids on $ \H^n $, the members of which are called (dyadic) cubes.
A collection of cubes $\mathcal{S}$ in $\H^n $ is said to be $ \eta$-sparse if there are sets $\{E_S \subset S:S\in \mathcal{S}\}$ which are pairwise disjoint and satisfy $|E_S|>\eta|S|$ for all $S\in \mathcal{S}$. For any cube $Q$ and $1<p<\infty$, we define
 \[ \langle f\rangle_{Q,p}:=\bigg(\frac{1}{|Q|}\int_Q|f(x)|^pdx\bigg)^{1/p}, \qquad  \langle f\rangle_{Q}:=\frac{1}{|Q|}\int_Q|f(x)|dx. \]
 In the above, $ x =(z,t) \in \H^n $ and $ dx = dz \,dt $ is the Lebesgue measure on $ \C^n \times \R $, which incidentally is the Haar measure on the Heisenberg group.
By the term  $(p,q)$-sparse form we mean the following: 
\[
\Lambda_{\mathcal{S},p,q}(f_1,f_2)=\sum_{S\in\mathcal{S}}|S|\langle f_1\rangle_{S,p}\langle f_2\rangle_{S,q}.
\]

\begin{thm}
\label{thm:mainSH}
Assume $ n \geq 2$. Let $ 1 < p, q < \infty $ be such that $ (\frac{1}{p},\frac{1}{q}) $ belongs to the interior of the triangle joining the points $ (0,1), (1,0) $ and $ (\frac{3n+1}{3n+4},\frac{3n+1}{3n+4})$. Then for any  pair of compactly supported bounded functions $ (f_1,f_2) $ there exists a $ (p,q)$-sparse form such that $ \langle M_{\operatorname{lac}}f_1,f_2\rangle \leq C \Lambda_{\mathcal{S},p,q}(f_1,f_2)$.
\end{thm}

We do not know whether Theorem \ref{thm:mainSH} delivers the optimal range of $(p,q)$. 
We will return to the study of the sharpness somewhere else. 

With a similar procedure, and using the results obtained for the lacunary case, we can also prove a sparse domination for the full maximal operator and deduce weighted norm inequalities, see Theorem \ref{thm:sparseF} in Subsection \ref{sec:sparseF}. Nevertheless, since these results are subordinated to the results for $M_{\operatorname{lac}}$, the bounds obtained are expected to be far from optimal. Indeed, as in the Euclidean case, the bounds are expected to hold in a quadrangle, rather than in a triangle, and better estimates along the anti-diagonal should be achieved.

In proving the corresponding sparse bounds for the spherical maximal functions on $ \R^n $, Lacey \cite{Lacey} made use of two features of the spherical means. The first one is the $ L^p-L^q$ estimate, also referred as $L^p$ improving estimate, of the operator $ S_r f = f \ast \sigma_r $ for a fixed $ r $, in the case of the lacunary spherical averages, and for a local (single scale) variant of the maximal function, in the case of the full averages. The second feature is a continuity  property of the difference $ S_r f- \tau_y S_r f $, where $ \tau_y f(x) = f(x-y) $ is the translation operator.  By this we mean a rescaled version of an estimate of the form $ \| S_1-\tau_y S_1 \|_{L^p \rightarrow L^q} \leq C |y|^\eta $ for some $ \eta >0.$ Thus this  is essentially a slight improvement of the $ L^p-L^q$ estimate, which turns out to be preserved under small translations, with a gain in $y$. In the Euclidean case, the $ L^p $ improving property of $S_r$ already existed in the literature, and the continuity property could be deduced almost immediately from the well-known estimates for the Fourier multiplier associated to these spherical means and the $L^p$ improving property. 

In our case, $L^p$ improving estimates, which are the heart of the matter, are new and addressed in Section~\ref{sec:Lp} for $A_r$. Our approach to develop the program and get the $ L^p-L^q$ estimates is based on spectral methods attached to the spherical means on the Heisenberg group. The continuity condition, even though it is a technical estimate that follows from the $ L^p-L^q$ bounds, is more difficult to obtain than in the Euclidean case and it is shown in Section \ref{sec:continuity}. The corresponding results concerning the full case are addressed in Section \ref{sec:full}. 

\begin{rem}
As mentioned above, we do not know whether our results are optimal or not  but actually we believe that they are  most probably suboptimal. In particular, for the full spherical maximal function, it is reasonable to expect the bounds to hold for a range of $(p,q)$ contained in a larger quadrangle, analogously as in the Euclidean case. Nevertheless, as it will be clear from the proofs, the procedure to obtain sparse bounds is independent of the numerology, so the suboptimality of the results are due to the suboptimal $L^p-L^q$ bounds for the single scale operators. The better input $L^p-L^q$ estimates would yield better sparse bounds.
\end{rem}

The results in this paper are restricted to dimension $n\ge2$. Recently, in \cite{BGHS},  the authors proved that $M_{\operatorname{full}}$, acting on a class of Heisenberg radial functions (i.e., $f:\H^1\to \C$ such that $f(Rz,t)=f(z,t)$ for all $R\in \operatorname{SO}(2)$), is bounded on $L^p(\H^1)$ for $2<p\le \infty$. Up to our knowledge, the boundedness of the full spherical maximal function on the Heisenberg group in the case $n=1$ is still open. 

\textbf{Outline of the proofs.}  We closely follow the strategy of Lacey  in proving Theorem \ref{thm:mainSH}, but in our case we do not have all the necessary ingredients at our disposal. Consequently, we have to first prove the $ L^p-L^q$ estimates of the operator $A_r$ on $\H^n$ and then  use them to prove the corresponding continuity property of the difference $ A_1f-A_1 \tau_y f$ where now $ \tau_y f(x) = f(xy^{-1}) $ is the right translation by $ y^{-1} $ on the Heisenberg group. We observe that, in the case of the Heisenberg group, the Fourier multipliers are not scalars but operators and hence the proofs become more involved. These results are new and have their own interest. Finally, we will prove the sparse bounds. We will have to modify appropriately the approach of Lacey, since we are in a non-commutative setting. This implies, in particular, that a metric has to be suitably chosen to make the Heisenberg group a space of homogeneous type. In order to keep the paper self-contained, we present a full detailed proof of the sparse domination. Along the paper, we will be assuming that the functions $f, f_1$, and $f_2$ arising are non-negative, which we can always do without loss of generality.

\textbf{Structure of the paper.} In Section \ref{sec:Lp} we give definitions and facts concerning the group Fourier transform on $\H^n$, the spectral description of the spherical means $A_r$, and  we establish $ L^p-L^q $ estimates for these operators. 
 In Section \ref{sec:continuity} we prove the continuity property  of $ A_r f- A_r\tau_y f.$ In Section \ref{sec:sparse} we establish the sparse bound and prove Theorem \ref{thm:mainSH} and in Section \ref{sec:bound} we deduce unweighted (Theorem \ref{thm:spherical}) and weighted boundedness properties of the lacunary maximal function. Finally, Section \ref{sec:full} is devoted to present the results for the full maximal function.
 
\section{$ L^p-L^q $ estimates for the spherical means}
\label{sec:Lp}

The observation that the spherical mean value operator $ S_r f := f\ast \sigma_r $ on $ \R^n $ is a Fourier multiplier plays an important role in every work dealing with the spherical maximal function. In fact, we know that
\begin{equation}
\label{eq:ArEuc}
 f \ast \sigma_r(x) = (2\pi)^{-n/2} \int_{\R^n}  e^{i x \cdot \xi} \widehat{f}(\xi) \frac{ J_{n/2-1}(r|\xi|)} {(r|\xi|)^{n/2-1}} d\xi 
\end{equation}
where $ J_{n/2-1} $ is the Bessel function of order $ n/2-1.$ As Bessel functions $ J_\alpha $ are defined even for complex values of $ \alpha $, the above allows one to embed $ S_r f$ into an analytic family of operators and Stein's analytic interpolation theorem comes in handy in studying the spherical maximal function. Indeed, this was the technique employed by Strichartz \cite{S} in order to study $ L^p $ improving properties of $ S_r $.  We will use the same strategy to get the $L^p$ improving property of $A_r$ on $\H^n$. Actually, for the spherical means on the Heisenberg group, there is available in the literature a representation  analogous to \eqref{eq:ArEuc} if we replace the Euclidean  Fourier transform by the group Fourier transform on $ \H^n$, see \eqref{eq:expression}.

The present section will be organised as follows. In Subsection \ref{subsec:preliminaries}  we will introduce some preliminaries on the group Fourier transform on $\H^n$. In Subsection  \ref{subsec:spectral} we will give the spectral description of the spherical averages $A_r$, which will involve special Hermite and Laguerre expansions. Sharp estimates for certain Laguerre functions will be shown in Subsection \ref{subsec:spectralest}. Then in Subsection~\ref{subsec:gFt} we will obtain the $L^p$ improving property of $A_r$. 

\subsection{The group Fourier transform on the Heisenberg group}
\label{subsec:preliminaries}

For the group $ \H^n $ we have a family  of irreducible unitary representations $ \pi_\lambda $ indexed by non-zero reals $ \lambda $ and realised on $ L^2(\R^n)$. The action of $\pi_{\lambda}(z,t)$ on $L^2(\R^n)$ is explicitly given by
\begin{equation}
\label{eq:pilambda}
\pi_{\lambda}(z,t)\varphi(\xi)=e^{i\lambda t}e^{i\lambda(x\cdot \xi+\frac12 x\cdot y)}\varphi(\xi+y)
\end{equation}
where $\varphi\in L^2(\R^n)$ and $z=x+iy$. 
By the theorem of Stone and von Neumann, which classifies all the irreducible unitary representations of $ \H^n $, combined with the fact that the Plancherel measure for $ \H^n $ is supported only on the infinite dimensional representations, it is enough to consider the following operator valued function known as the group Fourier transform of a given function $ f $ on $ \H^n$:
\begin{equation}
\label{eq:gFt}
\widehat{f}(\lambda) = \int_{\H^n} f(z,t) \pi_\lambda(z,t) \,dz \,dt.
\end{equation}
The above is well defined, e.g., when $ f \in L^1(\H^n) $ and for each $ \lambda \neq 0,$ $ \widehat{f}(\lambda) $ is a bounded linear operator on $ L^2(\R^n)$. The irreducible unitary representations $ \pi_\lambda $ admit the factorisation $ \pi_\lambda(z,t) = e^{i\lambda t} \pi_\lambda(z,0) $ and hence we can write the Fourier transform as
$$ \widehat{f}(\lambda)  = \int_{\C^n}  f^\lambda(z) \pi_\lambda(z,0)\, dz, $$ where
 for a function $f$ on $\H^n$, $f^{\lambda}(z)$ stands for the partial inverse Fourier transform
$$
f^{\lambda}(z)=\int_{-\infty}^{\infty}e^{i\lambda t}f(z,t)\,dt. 
$$

When $f\in L^1\cap L^2(\H^n)$ it can be easily verified that $\widehat{f}(\lambda)$ is a Hilbert-Schmidt operator and we have 
$$
\int_{\H^n}|f(z,t)|^2\,dz\,dt=(2\pi)^{-n-1}\int_{-\infty}^{\infty}\|\widehat{f}(\lambda)\|^2_{\operatorname{HS}}|\lambda|^n\,d\lambda.
$$
The above equality of norms allows us to extend the definition of the Fourier transform to all $L^2$ functions. It then follows that we have Plancherel theorem: $f\to \widehat{f}$ is a unitary operator from $L^2(\H^n)$ onto $L^2(\R^*,\textrm{S}_2,d\mu)$ where $\textrm{S}_2$ stands for the space of all Hilbert-Schmidt operators on $L^2(\R^n)$ and $d\mu(x)=(2\pi)^{-n-1}|\lambda|^{n}\,d\lambda$ is the Plancherel measure for the group $\H^n$. We refer to \cite{STH} for more details.
\subsection{Spectral theory of the spherical means on the Heisenberg group}
\label{subsec:spectral}

As pointed out above, a spectral definition of $A_r=f\ast \mu_r$ was already given in \cite{NaT, NeT}. For the convenience of the readers we will briefly recall it in this subsection after providing some necessary definitions that will be useful in the next sections.

Observe that the definition \eqref{eq:gFt} makes sense even if we replace $ f $ by a finite Borel measure $ \mu.$ In particular, $ \widehat{\mu}_r(\lambda) $ are well defined bounded operators on $ L^2(\R^n)$ which can be described explicitly. Combined with the fact that $ \widehat{ f\ast g}(\lambda) = \widehat{f}(\lambda) \widehat{g}(\lambda) $ we obtain $ \widehat{A_rf}(\lambda) = \widehat{f}(\lambda)\widehat{\mu}_r(\lambda).$
The operators $ \widehat{\mu}_r(\lambda) $ turn out to be diagonalisable in the Hermite basis. Indeed, if we let $ \Phi_\alpha^\lambda$,  $\alpha \in \mathbb{N}^n $, stand for the normalised  Hermite functions on $ \R^n, $ then $ \widehat{\mu}_r(\lambda) \Phi_\alpha^\lambda = \psi_k^{n-1}(\sqrt{|\lambda|}r) \Phi_\alpha^\lambda$ where $ k = |\alpha|$. Here, for any $ \delta > -1$, $\psi_k^{\delta}$ stand for the normalised Laguerre functions defined by 
\begin{equation}
\label{eq:LagF} 
\psi_k^{\delta}(r)=\frac{\Gamma(k+1)\Gamma(\delta+1)}{\Gamma(k+\delta+1)}L_k^{\delta}\Big(\frac12r^2\Big)e^{-\frac14r^2},
\end{equation}
where $  L_k^\delta(r) $ are the Laguerre polynomials of type $ \delta$.
The Hermite functions  $ \Phi_\alpha^\lambda $ are eigenfunctions of the Hermite operator $H(\lambda) = -\Delta+\lambda^2 |x|^2$. More precisely, $ H(\lambda) \Phi_\alpha^\lambda = (2|\alpha|+n)|\lambda| $ and the spectral decomposition of $ H(\lambda) $ is then written as 
\begin{equation}
\label{eq:Hl}
H(\lambda) = \sum_{k=0}^\infty  (2k+n)|\lambda| P_k(\lambda)
\end{equation} 
where $ P_k(\lambda) $ are the Hermite projection operators. It is well known (see \cite[Proposition 4.1]{TRMI}) that
$$ \widehat{\mu}_r(\lambda) = \sum_{k=0}^\infty  \psi_k^{n-1}(\sqrt{|\lambda|}r) P_k(\lambda), $$
 Hence we have the relation
\begin{equation}
\label{ArHe}
\widehat{A_rf}(\lambda) = \widehat{f}(\lambda) \sum_{k=0}^\infty  \psi_k^{n-1}(\sqrt{|\lambda|}r) P_k(\lambda),
\end{equation}
which is the analogue of \eqref{eq:ArEuc} in our situation. Thus, as in the Euclidean case, the spherical mean value operators $ A_r $ are (right) Fourier multipliers on the Heisenberg group. 

However, in order to define an analytic family of operators containing the spherical means, it is more suitable to rewrite \eqref{ArHe} in terms of Laguerre expansions. For that purpose, we will make use of the special Hermite expansion of the function $ f^\lambda $, which can be put in a compact form as follows.
Let $ \varphi_k^{\lambda}(z)=L_k^{n-1}\big(\frac12|\lambda||z|^2\big)e^{-\frac14|\lambda||z|^2}$ stand for the Laguerre functions of type $ (n-1) $ on $ \C^n.$ 
The $\lambda$-twisted convolution $f^{\lambda}\ast_{\lambda}\varphi_k^{\lambda}(z)$ is then defined by
$$
f^{\lambda}\ast_{\lambda}\varphi_k^{\lambda}(z)=\int_{\C^n}f^{\lambda}(z-w)\varphi_k^{\lambda}(w)e^{i\frac{\lambda}{2}\Im z\cdot \overline{w}}\,dw.
$$
It is well known that one has the expansion (see \cite[Chapter 3, proof of Theorem 3.5.6]{STH})
$$ f^\lambda(z)  = (2\pi)^{-n} |\lambda|^n \sum_{k=0}^\infty   f^{\lambda}\ast_{\lambda}\varphi_k^{\lambda}(z),
$$ 
which leads to the formula  (see \cite[Theorem 2.1.1]{STH})
$$ f(z,t) = (2\pi)^{-n-1}  \int_{-\infty}^\infty  e^{-i\lambda t} \Big( \sum_{k=0}^\infty   f^{\lambda}\ast_{\lambda}\varphi_k^{\lambda}(z)\Big) |\lambda|^n d\lambda. 
$$
Applying this to  $f\ast \mu_r$ we have 
$$
f\ast \mu_r(z,t)=\frac{1}{2\pi}\int_{-\infty}^{\infty}e^{-i\lambda t}f^{\lambda}\ast_{\lambda}\mu_r(z) \,d\lambda
$$
 where we used the fact that $(f\ast \mu_r)^{\lambda}(z)=f^{\lambda}\ast_{\lambda}\mu_r(z).$  It has been also shown that (see \cite[Theorem 4.1]{TRMI} and \cite[Proof of Proposition 6.1]{NeT}) 
$$
f^{\lambda}\ast_{\lambda}\mu_r(z)=  (2\pi)^{-n} |\lambda|^n \sum_{k=0}^{\infty}\frac{k!(n-1)!}{(k+n-1)!}\varphi_k^{\lambda}(r)f^{\lambda}\ast_{\lambda}\varphi_k^{\lambda}(z),
$$
leading to the expansion (see \cite{NaT, NeT})
\begin{equation}
\label{eq:expression}
A_rf(z,t)=(2\pi)^{-n-1}\int_{-\infty}^{\infty}e^{-i\lambda t}\Big(\sum_{k=0}^{\infty}\psi_k^{n-1}(\sqrt{|\lambda|}r)f^{\lambda}\ast_{\lambda}\varphi_k^{\lambda}(z)\Big)|\lambda|^n\,d\lambda.
\end{equation}
By replacing $ \psi_k^{n-1} $ by $ \psi_k^\delta $ we get the  family of operators taking $ f $ into 
\begin{equation}
\label{eq:ardelta}
(2\pi)^{-n-1}\sum_{k=0}^{\infty}\int_{-\infty}^{\infty}e^{-i\lambda t}\psi_k^{\delta}(\sqrt{|\lambda|}r)f^{\lambda}\ast_{\lambda}\varphi_k^{\lambda}(z)|\lambda|^n\, d\lambda.
\end{equation}
We will consider these operators when studying the $ L^p-L^q $ estimates of the spherical mean value operator.

\subsection{An analytic family of operators}
\label{subsec:analytic}

The Laguerre functions $\psi_k^{\delta}$ can be defined for all values of $\delta>-1$, even for complex $\delta$ with $\Re \delta>-1$. We define
\begin{equation}
\label{eq:Abeta}
 \mathcal{A}_{\beta}f(z,t)=(2\pi)^{-n-1}\int_{-\infty}^{\infty}e^{-i\lambda t}\Big(\sum_{k=0}^{\infty}\psi_k^{\beta+n-1}(\sqrt{|\lambda|})f^{\lambda}\ast_{\lambda}\varphi_k^{\lambda}(z)\Big)|\lambda|^n\,d\lambda,
\end{equation}
for $\Re(\beta+n-1)>-1$. Note that for $\beta=0$ we recover $A_1$, thus $A_1=\mathcal{A}_0$.
We will use the following relation between Laguerre polynomials of different types in order to express $\mathcal{A}_{\beta}$ in terms of $A_1$ (see \cite[(2.19.2.2)]{PBM})
\begin{equation}
\label{eq:connection}
L_k^{\mu+\nu}(r)=\frac{\Gamma(k+\mu+\nu+1)}{\Gamma(\nu)\Gamma(k+\mu+1)}\int_0^1t^{\mu}(1-t)^{\nu-1}L_k^{\mu}(rt)\,dt,
\end{equation}
valid for $\Re \mu>-1$ and $\Re \nu>0$. We define, for $ s>0$,
\begin{equation}
\label{eq:Poisson}
P_sf(z,t)=\frac{1}{2\pi}\int_{-\infty}^{\infty}e^{-i\lambda t}e^{-\frac14|\lambda| s}f^{\lambda}(z)\,d\lambda
\end{equation}
to be the Poisson integral of $f$ in the $t$-variable. We see that, for $\Re \beta>0$, $\mathcal{A}_{\beta}$ is given by the following representation.
\begin{lem}
\label{lem:family}
Let $\Re \beta>0$. The operator $\mathcal{A}_{\beta}$ is given by the formula
\begin{equation}
\label{eq:Abetaa}
\mathcal{A}_{\beta}f(z,t)=2\frac{\Gamma(\beta+n)}{\Gamma(\beta)\Gamma(n)}\int_0^1s^{2n-1}(1-s^2)^{\beta-1}P_{1-s^2}f\ast \mu_{s}(z,t)\,ds. 
\end{equation}
\end{lem}
\begin{proof}
In view of \eqref{eq:Abeta}, it is enough to verify
\begin{multline*}
2\frac{\Gamma(\beta+n)}{\Gamma(\beta)\Gamma(n)}\int_0^1s^{2n-1}(1-s^2)^{\beta-1}P_{1-s^2}f\ast \mu_{s}(z,t)\,ds\\
=(2\pi)^{-n-1}\int_{-\infty}^{\infty}e^{-i\lambda t}\Big(\sum_{k=0}^{\infty}\psi_k^{\beta+n-1}(\sqrt{|\lambda|})f^{\lambda}\ast_{\lambda}\varphi_k^{\lambda}(z)\Big)|\lambda|^n\,d\lambda.
\end{multline*}
Note that the left hand side of the above equation is well defined only for $\Re \beta>0$ whereas the right hand side makes sense for all $\Re \beta>-n$. We can thus think of the right hand side as an analytic continuation of the left hand side. 
In view of \eqref{eq:Poisson},  the Fourier transform of the Poisson integral $P_{s}f$ in the $t$-variable can be written as 
$$
(P_{s}f)^{\lambda}(z)=e^{-\frac14|\lambda| s}f^{\lambda}(z).
$$
Then, by \eqref{eq:expression}  the spherical averages of the Poisson integral $P_{1-s^2}f $ are given by 
$$
P_{1-s^2}f\ast \mu_{s}(z,t)=(2\pi)^{-n-1}\sum_{k=0}^{\infty}\int_{-\infty}^{\infty}e^{-i\lambda t}\psi_k^{n-1}(\sqrt{|\lambda|}s)e^{-\frac14|\lambda|(1-s^2)}f^{\lambda}\ast_{\lambda}\varphi_k^{\lambda}(z)|\lambda|^n\,d\lambda.
$$
Integrating the above equation against $s^{2n-1}(1-s^2)^{\beta-1}\,ds$, we obtain
$$
\int_0^1s^{2n-1}(1-s^2)^{\beta-1}P_{1-s^2}f\ast \mu_{s}(z,t)\,ds=(2\pi)^{-n-1}\sum_{k=0}^{\infty}\int_{-\infty}^{\infty}e^{-i\lambda t}\rho_k(\sqrt{|\lambda|})f^{\lambda}\ast_{\lambda}\varphi_k^{\lambda}(z)|\lambda|^n\,d\lambda,
$$
where
\begin{equation}
\label{eq:rho}
\rho_k(\sqrt{|\lambda|})=\int_0^1s^{2n-1}(1-s^2)^{\beta-1}\psi_k^{n-1}(\sqrt{|\lambda|}s)e^{-\frac14|\lambda|(1-s^2)}\,ds.
\end{equation}
Recalling the definition of $\psi_k^{n-1}$ given in \eqref{eq:LagF} we have
$$
\rho_k(\sqrt{|\lambda|})=\frac{\Gamma(k+1)\Gamma(n)}{\Gamma(k+n)}\int_0^1s^{2n-1}(1-s^2)^{\beta-1}L_k^{n-1}\Big(\frac12s^2|\lambda|\Big)e^{-\frac14|\lambda|}\,ds.
$$
We now use the formula \eqref{eq:connection}. First we make a change of variables $t\to u^2$ and then choose $\mu=n-1$ and $\nu=\beta$, so that
\begin{align}
\label{eq:rho2}
\notag\rho_k(\sqrt{|\lambda|})&=\frac{\Gamma(k+1)\Gamma(n)}{\Gamma(k+n)}\frac12\frac{\Gamma(\beta)\Gamma(k+n)}{\Gamma(k+n+\beta)}e^{-\frac14|\lambda|}L_k^{n+\beta-1}\Big(\frac{|\lambda|}{2}\Big)\\
&=\frac12\frac{\Gamma(\beta)\Gamma(n)}{\Gamma(\beta+n)}\psi_k^{\beta+n-1}(\sqrt{|\lambda|}).
\end{align}
The proof follows readily.
\end{proof}
In particular, from \eqref{eq:rho} and \eqref{eq:rho2} (after a change of variable), in the proof of Lemma~\ref{lem:family}, we infer the following identity.
\begin{cor}
\label{cor:ident}
Let $\Re \beta>0$ and $\alpha>-1$. Then, for $ r>0$,
$$
\psi_k^{\alpha+\beta}(r)=2\frac{\Gamma(\beta+\alpha+1)}{\Gamma(\beta)\Gamma(\alpha+1)}\int_0^1u^{\alpha}(1-u)^{\beta-1}\psi_k^{\alpha}(r\sqrt{u})e^{-\frac14 r^2(1-u)}\,du. 
$$
\end{cor}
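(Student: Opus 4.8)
The claimed identity is the pointwise counterpart of the computation already performed inside the proof of Lemma~\ref{lem:family}, with the special parameter $n-1$ there replaced by an arbitrary $\alpha>-1$; nothing in that computation used integrality of $n$. I would argue as follows.

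First, insert the explicit formula \eqref{eq:LagF} into the integral on the right-hand side: since $\psi_k^{\alpha}(t\sqrt{s})=\frac{\Gamma(k+1)\Gamma(\alpha+1)}{\Gamma(k+\alpha+1)}L_k^{\alpha}(\tfrac12 t^2 s)\,e^{-\frac14 t^2 s}$, the two Gaussian factors combine into $e^{-\frac14 t^2 s}e^{-\frac14 t^2(1-s)}=e^{-\frac14 t^2}$, which is independent of $s$ and pulls out of the integral. What remains is a quotient of $\Gamma$-values times $e^{-\frac14 t^2}\int_0^1 s^{\alpha}(1-s)^{\beta-1}L_k^{\alpha}(\tfrac12 t^2 s)\,ds$.

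Second, apply the connection formula \eqref{eq:connection} with $\mu=\alpha$ and $\nu=\beta$ (admissible precisely because $\alpha>-1$ and $\Re\beta>0$) and with argument $\tfrac12 t^2$; it evaluates the last integral as $\frac{\Gamma(\beta)\Gamma(k+\alpha+1)}{\Gamma(k+\alpha+\beta+1)}L_k^{\alpha+\beta}(\tfrac12 t^2)$. Reassembling, and using \eqref{eq:LagF} once more to recognise $\frac{\Gamma(k+1)\Gamma(\alpha+\beta+1)}{\Gamma(k+\alpha+\beta+1)}L_k^{\alpha+\beta}(\tfrac12 t^2)\,e^{-\frac14 t^2}=\psi_k^{\alpha+\beta}(t)$, the $\Gamma$-factors collapse into the normalising constant in front of the integral, proving the identity. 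An equivalent route, matching the parenthetical remark in the statement, is to start from $\rho_k(\sqrt{|\lambda|})=\frac12\frac{\Gamma(\beta)\Gamma(n)}{\Gamma(\beta+n)}\psi_k^{\beta+n-1}(\sqrt{|\lambda|})$ obtained in \eqref{eq:rho}--\eqref{eq:rho2} --- whose derivation uses only $\Re\beta>0$ and $n-1>-1$, hence holds verbatim with $n-1$ replaced by any $\alpha>-1$ and $\sqrt{|\lambda|}$ by any $t>0$ --- and then to change variable $s=r^2$; its Jacobian, $r^{2\alpha+1}\,dr=\frac12 s^{\alpha}\,ds$, turns the $r$-integral of \eqref{eq:rho} into the $s$-integral of the statement.

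I do not expect a genuine obstacle: the proof is a one-line substitution followed by a single application of \eqref{eq:connection}. The only points deserving a word are bookkeeping ones --- that $\Re\beta>0$ is exactly what is needed both for convergence of the $s$-integral at the endpoint $s=1$ and for the validity of \eqref{eq:connection}, and that $\alpha>-1$ keeps every $\Gamma$-value and every Laguerre polynomial $L_k^{\alpha}$ meaningful. Since the computation is entirely algebraic it in fact works for complex $\beta$ with $\Re\beta>0$, consistently with the use of $\mathcal{A}^{\beta}$ elsewhere; one may alternatively prove it first for real $\beta>0$ and then extend by analytic continuation, both sides being holomorphic in $\beta$ on that half-plane.
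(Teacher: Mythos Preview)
Your proposal is correct and coincides with the paper's own argument: the paper derives the corollary precisely ``from \eqref{eq:rho} and \eqref{eq:rho2} (after a change of variable)'', which is your second route, while your first route simply unwinds that same computation by substituting \eqref{eq:LagF} and invoking \eqref{eq:connection} with $\mu=\alpha$, $\nu=\beta$. Your observation that nothing in the calculation uses integrality of $n$, so that $n-1$ may be replaced by any $\alpha>-1$, is exactly the point.
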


Observe that even for  large $ \beta $,   the operator $ \mathcal{A}_\beta $ is a convolution operator with a distribution supported on $ \C^n \times \{0\} $. This is in sharp contrast with the Euclidean case, see \cite{S}, and prevents us to have some $ L^p $ improving property for large values of $ \beta$. In order to overcome this, we slightly modify the family in Lemma \ref{lem:family} and define a new family $T_{\beta}$. As we will see below the modified family of operators $ T_\beta$  has a better behaviour for $ \beta \ge 1$. 

For $ \Re \beta>0, $ let
\begin{equation}
\label{eq:kbeta}
k_\beta(t)=\frac{1}{\Gamma(\beta)}t_+^{\beta-1}e^{-t},  
\end{equation}
where $ t_+^{\beta-1} = t^{\beta-1} \chi_{(0,\infty)}(t) $, which defines a family of distributions on $\R$ and $\lim_{\beta\to 0} k_\beta(t)=\delta_0$, the Dirac distribution at $0$. Given a function $f$ on $\H^n$ and $\varphi$ on $\R$ we use the notation $f\ast_3 \varphi$ to stand for the convolution in the central variable:
$$
f\ast_3 \varphi(z,t)=\int_{-\infty}^{\infty}f(z,t-u)\varphi(u)\,du.
$$
Thus we note that $P_{1-s^2}f(z,t)=f\ast_3 p_{1-s^2}(z,t)$ where $p_{1-s^2}$ is the usual Poisson kernel in the one dimensional variable $t$, associated to $P_{1-s^2}$. In fact, $p_{s}(t) $ is defined by the relation 
$ \int_{-\infty}^\infty  e^{i \lambda t} p_{s}(t) dt = e^{-\frac{1}{4}s|\lambda|}  $ and it is explicitly known: $ p_{s}(t) = c s(s^2+16 t^2)^{-1}$ for some constant $ c>0,$ see for example \cite{SW}.
With the above notation we define the new family by
\begin{equation}
\label{eq:Tbeta}
T_{\beta}f(z,t)=\frac{\Gamma(\beta+n)}{\Gamma(\beta)\Gamma(n)}\int_0^1s^{2n-1}(1-s^2)^{\beta-1}P_{1-s^2}(f\ast_3 k_\beta)\ast \mu_{s}(z,t)\,ds.
\end{equation}
In other words 
$$
T_{\beta}f= \mathcal{A}_{\beta}(f\ast_3 k_\beta).
$$ 
\begin{lem}
\label{lem:explicitTb}
For $ \Re \beta>0, $ the operator $T_{\beta}f$ has the explicit expansion
\begin{multline*}
T_{\beta}f(z,t)\\
=(2\pi)^{-n-1}\int_{-\infty}^{\infty}e^{-i\lambda t}(1-i\lambda)^{-\beta}\Big(\sum_{k=0}^{\infty}\psi_k^{\beta+n-1}(\sqrt{|\lambda|})f^{\lambda}\ast_{\lambda}\varphi_k^{\lambda}(z)\Big)|\lambda|^n\,d\lambda.
\end{multline*}
\end{lem}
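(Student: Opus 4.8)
The plan is to compute the group Fourier transform (or rather the special Hermite expansion in the $\lambda$-variable) of $T_\beta f = \mathcal{A}^\beta(f \ast_3 k_\beta)$ directly, using the representation of $\mathcal{A}^\beta$ from \eqref{eq:Abeta} (the spectral version, with $\psi_k^{\beta+n-1}$) together with the explicit Fourier transform of $k_\beta$. First I would recall that convolution in the central variable $t$ corresponds, after taking the partial Fourier transform $f \mapsto f^\lambda$, to multiplication: $(f \ast_3 k_\beta)^\lambda(z) = \widehat{k_\beta}(\lambda)\, f^\lambda(z)$, where $\widehat{k_\beta}(\lambda) = \int_{-\infty}^\infty e^{i\lambda s} k_\beta(s)\,ds$. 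So the key computation is to evaluate this one-dimensional integral.

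The main (and essentially only) technical point is the identity $\int_{-\infty}^\infty e^{i\lambda s}\, \tfrac{1}{\Gamma(\beta)} s_+^{\beta-1} e^{-s}\, ds = (1 - i\lambda)^{-\beta}$ for $\Re\beta > 0$. This is the standard formula for the Fourier transform of the one-sided Gamma density: substituting and recognizing $\int_0^\infty s^{\beta-1} e^{-(1-i\lambda)s}\,ds = \Gamma(\beta)(1-i\lambda)^{-\beta}$ (valid since $\Re(1-i\lambda) = 1 > 0$, with the principal branch of the power function, which is unambiguous because $1 - i\lambda$ lies in the right half-plane). I would state this, note the branch convention, and cite it as elementary.

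Granting this, the proof is immediate: starting from
\[
T_\beta f = \mathcal{A}^\beta(f \ast_3 k_\beta) = (2\pi)^{-n-1}\int_{-\infty}^\infty e^{-i\lambda t}\Big(\sum_{k=0}^\infty \psi_k^{\beta+n-1}(\sqrt{|\lambda|})\,(f\ast_3 k_\beta)^\lambda \ast_\lambda \varphi_k^\lambda(z)\Big)|\lambda|^n\,d\lambda,
\]
and using $(f\ast_3 k_\beta)^\lambda = (1-i\lambda)^{-\beta} f^\lambda$, hence $(f\ast_3 k_\beta)^\lambda \ast_\lambda \varphi_k^\lambda(z) = (1-i\lambda)^{-\beta}\, f^\lambda \ast_\lambda \varphi_k^\lambda(z)$ (the scalar $(1-i\lambda)^{-\beta}$ pulls out of the twisted convolution), we land on exactly the claimed expansion. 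The only care needed is a justification that the manipulations (interchanging $\ast_3$ with the $\lambda$-integral/spectral sum, pulling the scalar through $\ast_\lambda$) are legitimate; for $\Re\beta > 0$, $f$ compactly supported and bounded, one can argue on a dense class (e.g.\ Schwartz functions) where everything converges absolutely, and then interpret the general identity as an analytic continuation in $\beta$, exactly as in the remark following Lemma~\ref{lem:family}. I do not anticipate any serious obstacle here; the lemma is really a bookkeeping step recording how the modification $f \mapsto f \ast_3 k_\beta$ inserts the damping factor $(1-i\lambda)^{-\beta}$ into the multiplier.
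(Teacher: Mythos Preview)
Your proposal is correct and follows essentially the same approach as the paper: both use the spectral definition of $\mathcal{A}^\beta$ in \eqref{eq:Abeta}, the relation $(f\ast_3 k_\beta)^\lambda = \widehat{k_\beta}(\lambda)f^\lambda$, and the computation $\widehat{k_\beta}(\lambda)=(1-i\lambda)^{-\beta}$. The paper verifies this last identity by analytic continuation from real $z>0$ via the functional relation $F(\beta,z)=zF(\beta+1,z)$, whereas you invoke it directly as the standard Gamma integral with $\Re(1-i\lambda)=1>0$; both justifications are fine, and your extra remarks on density and branch choice are harmless embellishments.
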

\begin{proof}
The statement follows from Lemma \ref{lem:family}, \eqref{eq:Abeta}, and from the fact 
$$
\int_{-\infty}^{\infty}e^{i\lambda t}k_\beta(t)\,dt=\frac{1}{\Gamma(\beta)}\int_0^{\infty}e^{i\lambda t}t^{\beta-1}e^{-t}\,dt=(1-i\lambda)^{-\beta}.
$$
This can be verified by considering the function
$$
F(\beta, \zeta)=\frac{1}{\Gamma(\beta)}\int_0^{\infty}t^{\beta-1}e^{-t \zeta}\,dt
$$
defined and holomorphic for $\Re \beta>0$, $\Re \zeta>0$. Indeed, when $\zeta$ is fixed, with $\Re \zeta >0$, we have the relation $F(\beta,\zeta)=\zeta F(\beta+1,\zeta)$ which allows us to holomorphically extend $F(\beta,\zeta)$ in the $\beta$ variable. It is clear that when $\zeta>0$, $F(\beta,\zeta)=\zeta^{-\beta}$, which allows us to conclude that the Fourier transform of $ k_\beta$ at $\lambda$ is given by $(1-i\lambda)^{-\beta}$, as claimed.
\end{proof}

\subsection{Spectral estimates}
\label{subsec:spectralest}

In this subsection we will state and prove sharp estimates on the normalised Laguerre functions given in \eqref{eq:LagF}. These estimates will be needed to get the $L^2$ boundedness of the analytic family operators that we introduced in the previous subsection. 

We begin by expressing $\psi_k^{\delta}(r)$ more conveniently in terms of the standard Laguerre functions
$$
\mathcal{L}_k^{\delta}(r)=\Big(\frac{\Gamma(k+1)\Gamma(\delta+1)}{\Gamma(k+\delta+1)}\Big)^{\frac12}L_k^{\delta}(r)e^{-\frac12r}r^{\delta/2}
$$
which  form an orthonormal system in $L^2((0,\infty),dr)$. In terms of $\mathcal{L}_k^{\delta}(r),$ we have
$$
\psi_k^{\delta}(r)=2^{\delta}\Big(\frac{\Gamma(k+1)\Gamma(\delta+1)}{\Gamma(k+\delta+1)}\Big)^{\frac12}r^{-\delta}\mathcal{L}_k^{\delta}\Big(\frac12r^2\Big).
$$
Asymptotic properties of $\mathcal{L}_k^{\delta}(r)$ are well known in the literature and we have the following result, see \cite[Lemma 1.5.3]{T} (actually, the estimates in Lemma \ref{lem:T} below are sharp, see \cite[Section 2]{M} and \cite[Section 7]{Mu}).

\begin{lem}[\cite{T}]
\label{lem:T}
For $ k \in \mathbb{N} $, let us  set $ \nu: = (4k+2\delta+1).$ Then for $\delta>-1$, we have the following: 
$$
|\mathcal{L}_k^{\delta}(r)|\le C\begin{cases}(\nu r)^{\delta/2}, &0\le r\le \frac{1}{\nu}\\
(\nu r)^{-\frac14}, &\frac{1}{\nu}\le r\le \frac{\nu}{2}\\
\nu^{-\frac14}(\nu^{\frac13}+|\nu-r|)^{-\frac14}, &\frac{\nu}{2}\le r\le \frac{3\nu}{2}\\
e^{-\gamma r}, & r\ge \frac{3\nu}{2},
\end{cases}
$$
where $\gamma>0$ is a fixed constant.
\end{lem}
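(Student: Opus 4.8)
Lemma \ref{lem:T} is classical and a complete proof appears in \cite[Lemma 1.5.3]{T}; the plan is to reduce it to the four standard asymptotic regimes of the Laguerre polynomials as $k\to\infty$ and treat each one with the appropriate tool. Writing the normalising factor explicitly, $\mathcal{L}_k^{\delta}(r)=c_{k,\delta}\,L_k^{\delta}(r)\,e^{-r/2}r^{\delta/2}$ with $c_{k,\delta}=\big(\Gamma(k+1)\Gamma(\delta+1)/\Gamma(k+\delta+1)\big)^{1/2}$, and by Stirling $c_{k,\delta}\sim \Gamma(\delta+1)^{1/2}k^{-\delta/2}$, so the four regimes to be analysed are $r\le 1/k$, $1/k\le r\le k/2$, $k/2\le r\le 3k/2$ and $r\ge 3k/2$.

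On $0\le r\le 1/k$ I would use the classical inequality $e^{-r/2}|L_k^{\delta}(r)|\le \binom{k+\delta}{k}$, valid for $\delta\ge 0$ and $r\ge 0$ (with a routine variant when $-1<\delta<0$); since $c_{k,\delta}\binom{k+\delta}{k}=\binom{k+\delta}{k}^{1/2}\sim(k^{\delta}/\Gamma(\delta+1))^{1/2}$, the surviving factor $r^{\delta/2}$ gives at once $|\mathcal{L}_k^{\delta}(r)|\le C(kr)^{\delta/2}$. On the oscillatory range $1/k\le r\le k/2$ I would invoke a Hilb/Fej\'er-type asymptotic formula which, after the cancellation of the $r^{\delta/2}$ and $r^{-\delta/2}$ factors and the use of Stirling, expresses $\mathcal{L}_k^{\delta}(r)$ itself as a bounded constant times $J_{\delta}(s)$ with $s$ comparable to $(kr)^{1/2}$, plus a controlled remainder; the standard Bessel bounds $|J_{\delta}(s)|\le Cs^{\delta}$ for $0\le s\le 1$ and $|J_{\delta}(s)|\le Cs^{-1/2}$ for $s\ge 1$ then give $|\mathcal{L}_k^{\delta}(r)|\le C(kr)^{-1/4}$, and the two estimates match at $r\sim 1/k$.

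For the turning-point window $k/2\le r\le 3k/2$ the Bessel description degenerates and one must pass to Airy-type (Plancherel--Rotach/Perron) asymptotics for $L_k^{\delta}$, which produce the pointwise bound $k^{-1/4}(k^{1/3}+|k-r|)^{-1/4}$; continuity of the estimate at $r\sim k/2$ is then checked. Finally, for $r\ge 3k/2$, beyond the largest zero of $L_k^{\delta}$, $e^{-r/2}L_k^{\delta}(r)$ decays exponentially, which follows either from the same Perron asymptotics or, more self-containedly, from a Sturm comparison argument for the Laguerre differential equation, giving $|\mathcal{L}_k^{\delta}(r)|\le Ce^{-\gamma r}$ for a fixed $\gamma>0$. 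The delicate point is the turning-point step: one needs the error in the Airy asymptotics controlled uniformly for $r$ in the whole window $k/2\le r\le 3k/2$ and uniformly in $k$, which is the heart of the argument in \cite{T}; the sharpness of all four bounds is addressed in \cite[Section~2]{M} and \cite[Section~7]{Mu}.
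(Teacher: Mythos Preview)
The paper does not prove Lemma~\ref{lem:T} at all: it simply quotes the result from \cite[Lemma~1.5.3]{T} (noting sharpness via \cite{M,Mu}) and moves on. Your proposal is therefore not to be compared against any argument in the paper; rather, you have supplied a reasonable outline of the classical proof---the near-origin bound via $|L_k^\delta(r)e^{-r/2}|\le\binom{k+\delta}{k}$, the Hilb/Fej\'er--Bessel asymptotics in the oscillatory range, Plancherel--Rotach/Airy asymptotics around the turning point, and exponential decay beyond the largest zero---which is indeed the strategy carried out in \cite{T}.
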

From the above estimates of $\mathcal{L}_k^{\delta}$ we can obtain the following estimates for  the normalised Laguerre functions $\psi_k^{\delta}$.

\begin{lem}
\label{lem:uniform}
For $k\in \mathbb{N}$, let us set $\nu:=(4k+2\delta+1)$. Then, for $\alpha\ge0$ and $\delta>-1$ such that $\delta+\frac13-2\alpha\ge 0$, we have the uniform estimates
$$
\sup_k(\nu|\lambda|)^{\alpha}|\psi_k^{\delta}(\sqrt{|\lambda|})|\le C\begin{cases}
1,  &\text{ if } |\lambda|\le  \frac{1}{\nu},\\
|\lambda|^{2\alpha-\delta-\frac13},  &\text{ if } |\lambda|> \frac{1}{\nu}.
\end{cases}
$$
\end{lem}
\begin{proof}
Since $\frac{\Gamma(k+1)\Gamma(\delta+1)}{\Gamma(k+\delta+1)}\le C (4k+2\delta+1)^{-\delta}$ we need to bound 
$$
(\nu|\lambda|)^{\alpha}(\nu |\lambda|)^{-\delta/2}\mathcal{L}_k^{\delta}\Big(\frac12|\lambda|\Big). 
$$
When $\frac{1}{\nu}\le \frac12|\lambda|\le \frac{\nu}{2}$ we have the estimate 
$$
(\nu|\lambda|)^{\alpha}|\psi_k^{\delta}(\sqrt{|\lambda|})|\le C(\nu |\lambda|)^{\alpha-\delta/2-1/4}.
$$
From here, since $-2\alpha+\delta+\frac12\ge 0$, $\lambda^2\le  \nu |\lambda|$, we get 
$$
(\nu|\lambda|)^{\alpha}|\psi_k^{\delta}(\sqrt{|\lambda|})|\le C|\lambda|^{2\alpha-\delta-1/2}.
$$
When $\frac{\nu}{2}\le \frac12|\lambda|\le \frac{3\nu}{2}$, $|\lambda|$ is comparable to $\nu$ and hence we have
$$
(\nu|\lambda|)^{\alpha}|\psi_k^{\delta}(\sqrt{|\lambda|})|\le C(\nu|\lambda|)^{\alpha}(\nu |\lambda|)^{-\delta/2}\nu^{-\frac14}\nu^{-\frac{1}{12}}\le C|\lambda|^{2\alpha-\delta-\frac13}.
$$
On the region $|\lambda|\ge \frac{3\nu }{2}$ we have exponential decay. Finally, the estimate $\sup_k(\nu|\lambda|)^{\alpha}|\psi_k^{\delta}(\sqrt{|\lambda|})|\le C$ for $0\le |\lambda|\le\frac{1}{\nu}$ is immediate, in view of Lemma \ref{lem:T}. With this we prove the lemma. 
\end{proof}


\subsection{$ L^p-L^q $ estimates}
\label{subsec:gFt}

After the preparations in the previous subsections, we will proceed to prove the $ L^p-L^q $ estimates of the operator $A_1$. 

We will show  that when $\beta=1+i\gamma$, the operator $T_{\beta}$ defined in \eqref{eq:Tbeta} is bounded from $L^p(\H^n)$ into $L^{\infty}(\H^n)$ for any $p>1$, and that for certain negative values of $\beta$, $T_{\beta}$ is bounded on $L^2(\H^n)$. We can then use analytic interpolation to obtain a result for $T_0=\mathcal{A}_0=A_1$. We shall use the following definition: A function $\Phi(z)$ analytic in the open strip $0<\Re(z)<1$, and continuous in the closed strip, will be called of \textit{admissible growth} (cf. \cite{Stein-I}) if 
$$
\sup_{|y|\le r}\sup_{0\le x\le 1}\log|\Phi(x+iy)|\le Ae^{ar}, \quad a<\pi.
$$
\begin{prop}
\label{prop:Linfty}
Assume that $n\ge1$. Then for any $\delta>0$, $\gamma\in \R$,
$$
\|T_{1+i\gamma}f\|_{\infty}\le C_1(\gamma)\|f\|_{1+\delta},
$$
where $C_1(\gamma)$ is of admissible growth.
\end{prop}
\begin{proof}
For $\beta=1+i\gamma$ it follows that 
$$
|T_{1+i\gamma}f(z,t)|\le \frac{|\Gamma(1+i\gamma+n)|}{|\Gamma(1+i\gamma)|^2\Gamma(n)}\int_0^1s^{2n-1}P_{1-s^2}(f\ast_3 \varphi)\ast \mu_{s}(z,t)\,ds
$$
where $\varphi(t)=e^{-t}\chi_{(0,\infty)}(t)$. Since $\varphi\ge0$ it follows that  
$$
P_{1-s^2}(f\ast_3 \varphi)=\varphi\ast_3p_{1-s^2}\ast_3f\le \varphi\ast_3M_{\operatorname{HL}}^0 f
$$
where $M_{\operatorname{HL}}^0 f$ is the Hardy-Littlewood maximal function in the $t$-variable (we will use the notation  $M_{\operatorname{HL}} f$ for the Hardy-Littlewood maximal functions in all the variables in Section \ref{sec:full}). Here we have used the following well known fact:  Let $ \psi $ be a non-negative, integrable and radially decreasing function on $ \R $ and set $ \psi_s(t) = s^{-1}\psi(t/s).$ Then   $\sup_{s>0}|g \ast \psi_{s}(t)|\le C M g(t)$ for any locally integrable function $g$ on $\R$  where $ Mg $ stands for the Hardy-Littlewood maximal function on $ \R.$ Thus we have the estimate
$$
|T_{1+i\gamma}f(z,t)|\le C_1(\gamma)\int_0^1 (M_{\operatorname{HL}}^0 f\ast_3 \varphi)\ast \mu_{s}(z,t)s^{2n-1}\,ds.
$$
Now we make the following observation: suppose $K(z,t)=k(|z|)\varphi(t)$, where $k$ is a non-negative function on $ [0,\infty).$ Then
$$
f\ast K(z,t)=\int_0^{\infty}(f\ast_3 \varphi)\ast \mu_{s}(z,t)k(s)s^{2n-1}\,ds,
$$
which can be verified by recalling the definition of the spherical means $f\ast\mu_{s}(z,t)$ in \eqref{eq:defin} and integrating in polar coordinates. This gives us 
$$
|T_{1+i\gamma}f(z,t)|\le C_1(\gamma)M_{\operatorname{HL}}^0 f\ast K(z,t)
$$
where $K(z,t)=\chi_{|z|\le 1}(z)\varphi(t)$. As $M_{\operatorname{HL}}^0 f\in L^{1+\delta}(\H^n)$ and $K\in L^q(\H^n)$ for any $q\ge 1$, by H\"older we get
$$
\|T_{1+i\gamma}f\|_{\infty}\le C_1(\gamma)\|M_{\operatorname{HL}}^0 f\|_{1+\delta}\le C_1(\gamma)\|f\|_{1+\delta}.
$$
\end{proof}

In the next proposition we show that $T_{\beta}$ is bounded on $L^2(\H^n)$ even for some values of $\beta<0$. 
\begin{prop}
\label{prop:L2}
Assume that $n\ge1$ and $\beta> -\frac{n}{2}+\frac13$. Then for any $\gamma\in \R$
$$
\|T_{\beta+i\gamma}f\|_{2}\le C_2(\gamma)\|f\|_{2}.
$$
\end{prop}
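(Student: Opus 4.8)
The plan is to use the explicit spectral expansion of $T_{\beta}$ from Lemma~\ref{lem:explicitTb} together with the Plancherel theorem for the Heisenberg group. Since $T_{\beta}$ is a right Fourier multiplier acting by a function of the Hermite operator $H(\lambda)$, its $L^2$ operator norm is controlled by the supremum over $k$ and $\lambda$ of the multiplier symbol. Concretely, by Plancherel on $\H^n$,
\[
\|T_{\beta+i\gamma}f\|_2^2 = c_n \int_{-\infty}^{\infty} \|\widehat{T_{\beta+i\gamma}f}(\lambda)\|_{\mathrm{HS}}^2 \,|\lambda|^n\,d\lambda,
\]
and since $\widehat{T_{\beta+i\gamma}f}(\lambda) = \widehat f(\lambda)\,(1-i\lambda)^{-(\beta+i\gamma)} \sum_{k}\psi_k^{\beta+i\gamma+n-1}(\sqrt{|\lambda|}) P_k(\lambda)$, with the $P_k(\lambda)$ orthogonal projections, we get
\[
\|T_{\beta+i\gamma}f\|_2 \le \Big(\sup_{\lambda\neq 0}\ |1-i\lambda|^{-\beta} \sup_k |\psi_k^{\beta+i\gamma+n-1}(\sqrt{|\lambda|})|\Big)\,\|f\|_2.
\]
So the whole proposition reduces to showing that this double supremum is finite (with admissible growth in $\gamma$) precisely when $\beta > -\tfrac{n}{2}+\tfrac13$.

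The second step is to estimate the symbol. Write $\delta = \beta + n - 1$, so $\Re\delta = \beta + n - 1 > \tfrac{n}{2}-\tfrac23 \ge -\tfrac13$ when $n\ge1$; in particular $\Re\delta \ge -\tfrac13$, which is the hypothesis range of Lemma~\ref{lem:uniform}. I would first handle the purely imaginary shift: using Corollary~\ref{cor:ident} with $\alpha$ real (say $\alpha = \Re\delta$, close to but $>-1$) and the imaginary part of $\beta$ folded into the exponent, one controls $|\psi_k^{\delta}(\sqrt{|\lambda|})|$ by $C(\gamma)\,|\psi_k^{\Re\delta}(\sqrt{|\lambda|})|$ where $C(\gamma)$ grows polynomially (from the $\Gamma$-factors $\Gamma(\beta+\alpha+1)/(\Gamma(\beta)\Gamma(\alpha+1))$, which have admissible growth along vertical lines) — this is the standard device for reducing complex-order Laguerre bounds to real ones. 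Then Lemma~\ref{lem:uniform} applies with $\Re\delta$ in place of $\delta$: for $|\lambda|\le 1$ the symbol is $O(1)$, and for $|\lambda|>1$ we get $\sup_k|\psi_k^{\Re\delta}(\sqrt{|\lambda|})| \le C|\lambda|^{-\Re\delta-1/3} = C|\lambda|^{-(\beta+n-1)-1/3}$. Multiplying by $|1-i\lambda|^{-\beta} \sim |\lambda|^{-\beta}$ for large $|\lambda|$ gives a total decay of order $|\lambda|^{-2\beta - n + 2/3}$, which is bounded (indeed decaying) iff $2\beta + n - 2/3 \ge 0$, i.e. $\beta \ge -\tfrac{n}{2} + \tfrac13$. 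For $|\lambda|\le 1$, $|1-i\lambda|^{-\beta}$ is bounded as well, so the symbol is uniformly bounded there. This yields exactly the stated range.

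The main obstacle is making the reduction from complex to real Laguerre order rigorous with control on the $\gamma$-dependence: one must be careful that applying Corollary~\ref{cor:ident} keeps the $\Gamma$-quotient $\Gamma(\beta+\alpha+1)/(\Gamma(\beta)\Gamma(\alpha+1))$ of admissible exponential-type growth in $\Im\beta = \gamma$, which follows from Stirling's formula for the Gamma function on vertical strips (it grows no faster than $e^{C|\gamma|}$ times a polynomial, which is admissible for Stein interpolation). One should also check the boundary case $\beta$ near the line $\Re(\beta+n-1)=-1$ does not arise — since we only need $\beta > -n/2+1/3$ and $n\ge1$, we always have $\Re(\beta+n-1) > n/2 - 2/3 \ge -1/6 > -1$, so all Laguerre functions in sight are honestly defined and Lemma~\ref{lem:uniform} is applicable. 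A minor technical point: one must justify that $T_{\beta+i\gamma}$, a priori defined on nice functions, extends to a bounded operator on $L^2$, which is immediate once the symbol bound is in hand by density of, say, Schwartz functions.
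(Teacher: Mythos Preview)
Your approach is essentially the same as the paper's: reduce via Plancherel to a bound on the scalar symbol $(1+\lambda^2)^{-\beta/2}\sup_k|\psi_k^{\beta+i\gamma+n-1}(\sqrt{|\lambda|})|$, treat $\gamma=0$ directly by Lemma~\ref{lem:uniform}, and for $\gamma\neq0$ use Corollary~\ref{cor:ident} together with Stirling's formula to reduce to a real Laguerre order. Two small points deserve care. First, Corollary~\ref{cor:ident} requires the second parameter to have strictly positive real part, so you cannot take the corollary's ``$\beta$'' equal to $i\gamma$; the paper writes the order as $(\beta-\varepsilon+n-1)+(\varepsilon+i\gamma)$ with a small $\varepsilon>0$, and one checks at the end that the estimate holds for $\beta-\varepsilon\ge -n/2+1/3$, which is fine since the statement assumes strict inequality. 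Second, the integral formula does not literally yield a pointwise bound $|\psi_k^{\delta}(\sqrt{|\lambda|})|\le C(\gamma)|\psi_k^{\Re\delta}(\sqrt{|\lambda|})|$ at the same argument --- the right-hand side involves $\psi_k^{\alpha}(\sqrt{|\lambda|s})$ for $s\in(0,1)$ --- so what one actually does (and what the paper does) is insert the uniform estimate of Lemma~\ref{lem:uniform} inside the integral and then use the integrability of $s^{\alpha}(1-s)^{\varepsilon-1}$.
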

\begin{proof}
 In view of Lemma \ref{lem:explicitTb} and Plancherel theorem for the Fourier transform on $\R$  and  special Hermite expansions on $ \C^n$, we only have to check (observe that $ |(1-i\lambda)| = (1+\lambda^2)^{1/2}$),
$$
|(1-i\lambda)^{-(\beta+i\gamma)}||\psi_k^{\beta+i\gamma+n-1}(\sqrt{|\lambda|})|= (1+\lambda^2)^{-\beta/2}|\psi_k^{\beta+i\gamma+n-1}(\sqrt{|\lambda|})|\le C_2(\gamma)
$$
where $C_2(\gamma)$ is independent of $k$ and $\lambda$. When $\gamma=0$, it follows from the estimates of Lemma~\ref{lem:uniform} (with $\alpha=0$) that
$$
(1+\lambda^2)^{-\beta/2}|\psi_k^{\beta+n-1}(\sqrt{|\lambda|})|\le C|\lambda|^{-\beta}|\lambda|^{-\beta-(n-1)-\frac13}
$$
for $|\lambda|\ge 1$ (actually, for $|\lambda|\ge\frac{1}{\nu}$), which is bounded for  $\beta\ge -\frac{n}{2}+\frac13$. For $\gamma \neq 0$ we can express $\psi_k^{\beta+i\gamma+n-1}(\sqrt{|\lambda|})$ in terms of $\psi_k^{\beta-\varepsilon+n-1}(\sqrt{|\lambda|})$ for a small enough $\varepsilon>0$ and obtain the same estimate. Indeed, by Corollary~\ref{cor:ident} with $\alpha=\beta-\varepsilon+n-1$ and $\beta=\varepsilon+i\gamma$, and using the asymptotic formula $|\Gamma(\mu+iv)|\sim \sqrt{2\pi}|v|^{\mu-1/2}e^{-\pi|v|/2}$, as $v\to \infty$ (see for instance \cite[p. 281 bottom note]{SW}), we get
\begin{align*}
&|\psi_k^{\beta+i\gamma+n-1}(\sqrt{|\lambda|})|=\Big|2\frac{\Gamma(\beta+i\gamma+n)}{\Gamma(\varepsilon+i\gamma)\Gamma(\beta-\varepsilon+n)}\\
&\quad \times \int_0^1s^{\beta-\varepsilon+n-1}(1-s)^{\varepsilon+i\gamma-1}\psi_k^{\beta-\varepsilon+n-1}(\sqrt{|\lambda|s})e^{-\frac14|\lambda|(1-s)}\,ds\Big|\\
&\lesssim \frac{|\gamma|^{\beta+n-1/2}}{|\gamma|^{\varepsilon-1/2}}\Big|\int_0^1s^{\beta-\varepsilon+n-1}(1-s)^{\varepsilon+i\gamma-1}\psi_k^{\beta-\varepsilon+n-1}(\sqrt{|\lambda|s})e^{-\frac14|\lambda|(1-s)}\,ds\Big|,
\end{align*}
where the constant depends on $\beta$. Now, by the estimate for $\psi_k^{\delta}$ in Lemma \ref{lem:uniform} (for $\alpha=0$) and the integrability of the function $s^{\beta-\varepsilon+n-1}(1-s)^{\varepsilon+i\gamma-1}$, we have
$$
(1+\lambda^2)^{-\beta/2}|\psi_k^{\beta+i\gamma+n-1}(\sqrt{|\lambda|})|\le C|\lambda|^{-\beta}|\gamma|^{\beta+n-1-\varepsilon}|\lambda|^{-(\beta+n-1-\varepsilon)-1/3}.
$$
For $|\lambda|\ge 1$, the above is bounded for $\beta-\varepsilon\ge -\frac{n}{2}+\frac13$ with $\varepsilon$ small enough. The proof is complete.
\end{proof}

\begin{thm}
\label{thm:Lp}
Assume that $n\ge 1$ and $\varepsilon>0$. Then $A_1:L^{p}(\H^n)\to L^{q}(\H^n)$ for any $p,q$ such that 
$$
\frac{3}{3n+4-6\varepsilon}<\frac{1}{p}<\frac{3n+1-6\varepsilon}{3n+4-6\varepsilon},\qquad\frac{1}{q}=\frac{3}{3n+4-6\varepsilon}.
$$ 
\end{thm}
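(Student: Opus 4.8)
The plan is to obtain Theorem~\ref{thm:Lp} by applying Stein's analytic interpolation theorem \cite{Stein-I} to the analytic family $\{T_{\beta+i\gamma}\}$ constructed above, using the two endpoint estimates already established. By Lemma~\ref{lem:explicitTb} the family $\beta\mapsto T_\beta$ is analytic in the strip under consideration (the multiplier $(1-i\lambda)^{-\beta}$ and the Laguerre factor $\psi_k^{\beta+n-1}(\sqrt{|\lambda|})$ depend holomorphically on $\beta$, and the required $L^2$-type growth control on vertical lines follows from the bounds in Subsection~\ref{subsec:spectralest}), so the hypotheses of the interpolation theorem are met once the two boundary estimates are recorded with admissible growth in $\gamma$.

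First I would fix the two endpoints. On the right edge $\Re\beta=1$, Proposition~\ref{prop:Linfty} gives $\|T_{1+i\gamma}f\|_\infty\le C_1(\gamma)\|f\|_{1+\delta}$ for every $\delta>0$, with $C_1(\gamma)$ of admissible growth; so at $\beta=1+i\gamma$ we have the pair $(\frac{1}{1+\delta},0)$. On the left edge I would take $\Re\beta=\beta_0$ with $\beta_0=-\frac n2+\frac13+\varepsilon'$ for a small $\varepsilon'>0$; Proposition~\ref{prop:L2} (applicable since $\beta_0>-\frac n2+\frac13$) gives $\|T_{\beta_0+i\gamma}f\|_2\le C_2(\gamma)\|f\|_2$, again with admissible growth. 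Thus at $\beta=\beta_0+i\gamma$ we have the pair $(\frac12,\frac12)$.

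Next I would interpolate. Recall $T_0=\mathcal A^0=A_1$, so I need $\beta=0$, which corresponds to the parameter $\theta\in(0,1)$ solving $\theta\cdot 1+(1-\theta)\beta_0=0$, i.e. $\theta=\frac{-\beta_0}{1-\beta_0}=\frac{n/2-1/3-\varepsilon'}{n/2+2/3-\varepsilon'}$. Stein interpolation then yields $A_1:L^{p}(\H^n)\to L^{q}(\H^n)$ with
\[
\frac1p=\frac{\theta}{1+\delta}+\frac{1-\theta}{2},\qquad \frac1q=\frac{\theta}{\infty}+\frac{1-\theta}{2}=\frac{1-\theta}{2}.
\]
A direct computation with $\theta$ as above gives $\frac1q=\frac{1-\theta}{2}=\frac{3}{3n+4-6\varepsilon}$ after renaming $\varepsilon'$ in terms of $\varepsilon$ (the factor $6$ comes from clearing the denominators $\frac n2+\frac23$ and the scaling of $\varepsilon'$); since $\delta>0$ is arbitrary, letting $\delta\to0$ and then perturbing slightly shows that $\frac1p$ can be taken anywhere in the open interval with endpoints $\frac{1-\theta}{2}=\frac{3}{3n+4-6\varepsilon}$ and $\theta+\frac{1-\theta}{2}=\frac{1+\theta}{2}=\frac{3n+1-6\varepsilon}{3n+4-6\varepsilon}$, which is exactly the claimed range. (Formally one runs the interpolation for each fixed small $\delta$, obtaining the range with $1+\delta$ in place of $1$, and takes the union over $\delta>0$.)

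The main obstacle is not the interpolation bookkeeping but making sure the analytic family $\{T_{\beta+i\gamma}\}$ genuinely satisfies the hypotheses of Stein's theorem across the whole strip $\beta_0\le\Re\beta\le1$: one must check that for $f,g$ in a suitable dense class (e.g. finite linear combinations of products of Hermite-type functions in $z$ and Schwartz functions in $t$) the function $\beta\mapsto\langle T_\beta f,g\rangle$ is holomorphic in the interior, continuous up to the boundary, and of admissible growth on vertical lines. The holomorphy and growth reduce, via Lemma~\ref{lem:explicitTb} and Plancherel for the group Fourier transform, to the uniform Laguerre estimates of Lemma~\ref{lem:uniform} together with the asymptotics $|\Gamma(\mu+iv)|\sim\sqrt{2\pi}|v|^{\mu-1/2}e^{-\pi|v|/2}$ already used in Proposition~\ref{prop:L2}; the exponential decay $e^{-\pi|\gamma|/2}$ from the Gamma factors is what tames the polynomial-in-$\gamma$ losses and produces the admissible growth. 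Once this is in place the theorem follows.
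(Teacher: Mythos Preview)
Your proposal is correct and follows essentially the same approach as the paper: both apply Stein's analytic interpolation to the family $T_\beta$ between the $L^2\to L^2$ endpoint at $\Re\beta=-\tfrac{n}{2}+\tfrac{1}{3}+\varepsilon$ (Proposition~\ref{prop:L2}) and the $L^{1+\delta}\to L^\infty$ endpoint at $\Re\beta=1$ (Proposition~\ref{prop:Linfty}), then read off the result at $\beta=0$ where $T_0=A_1$. The only cosmetic difference is that the paper reparametrizes the strip via the affine map $\alpha(z)=(\tfrac{n}{2}-\tfrac{1}{3}-\varepsilon)(z-1)+z$ so as to work on $0\le\Re z\le 1$, whereas you describe the interpolation directly in the $\beta$-variable; the computations of $\theta$, $1/q$, and the range for $1/p$ agree exactly.
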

\begin{proof}
Let us consider the following holomorphic function $\alpha(z)$ on the strip $\{z:0\le \Re z\le 1\}$, given by  $\alpha(z)=\big(\frac{n}{2}-\frac13-\varepsilon\big)(z-1)+z$. We have $\alpha(0)=-\frac{n}{2}+\frac13+\varepsilon$ and $\alpha(1)=1$. Then, $T_{\alpha(z)}$ is an analytic family of linear operators and it was already shown that $T_{1+i\gamma}$ is bounded from $L^{1+\delta}(\H^n)$ to $L^{\infty}(\H^n)$. Therefore, we can apply Stein's interpolation theorem. Letting $z=u+iv$, we have
$$
\alpha(z)=0 \Longleftrightarrow \Big(\frac{n}{2}-\frac13-\varepsilon\Big)(u-1)+u=0 \Longleftrightarrow u=\frac{3n-2-6\varepsilon}{3n+4-6\varepsilon}.
$$
Since $\varepsilon>0$ is arbitrary,
we obtain
$$
T_{\alpha(u)}:L^{p_u}(\H^n)\to L^{q_u}(\H^n)
$$
where 
$$
\frac{3}{3n+4-6\varepsilon}<\frac{1}{p_u}<\frac{3n+1-6\varepsilon}{3n+4-6\varepsilon},\qquad\frac{1}{q_u}=\frac{3}{3n+4-6\varepsilon},
$$ 
and this leads to the result.
\end{proof}

\begin{cor}
\label{cor:LpLq}
Assume that $n\ge 1$. Then 
$$
A_1:L^{p}(\H^n)\to L^{q}(\H^n)
$$
whenever $\big(\frac{1}{p},\frac{1}{q}\big)$ lies in the interior of the triangle joining the points $(0,0), (1,1)$ and $\big(\frac{3n+1}{3n+4},\frac{3}{3n+4}\big)$, as well as the straight line segment joining the points $(0,0), (1,1)$, see $\mathbf{S}_n'$ in Figure \ref{figureaSH}.
\end{cor}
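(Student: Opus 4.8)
The plan is to derive Corollary~\ref{cor:LpLq} from Theorem~\ref{thm:Lp} by complex interpolation, combined with the trivial $L^p$ bounds for $A_1$ and an elementary convexity argument for the exponent region. First I would record the two families of estimates to be interpolated. On the diagonal: since $\mu_1$ is a probability measure on $\H^n$, Minkowski's integral inequality gives $\|A_1f\|_p\le\|f\|_p$ for every $1\le p\le\infty$, so each pair $(\tfrac1p,\tfrac1p)$ with $0\le\tfrac1p\le1$ belongs to the set where $A_1$ is bounded. Off the diagonal: for each fixed $\varepsilon\in(0,\tfrac{3n-2}{6})$, Theorem~\ref{thm:Lp} provides boundedness $A_1\colon L^p\to L^q$ for every pair on the open horizontal segment $\Sigma_\varepsilon$ of height $\tfrac1q=h_\varepsilon:=\tfrac{3}{3n+4-6\varepsilon}$ and with $h_\varepsilon<\tfrac1p<1-h_\varepsilon$; the left endpoint $(h_\varepsilon,h_\varepsilon)$ of $\Sigma_\varepsilon$ lies on the diagonal, and its right endpoint $R_\varepsilon:=(1-h_\varepsilon,h_\varepsilon)$ lies on the antidiagonal $\tfrac1p+\tfrac1q=1$.

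Next I would invoke the Riesz--Thorin theorem: as $A_1$ is a single linear operator, the set $\mathcal{R}$ of pairs $(\tfrac1p,\tfrac1q)\in[0,1]^2$ for which $A_1\colon L^p(\H^n)\to L^q(\H^n)$ is bounded is convex. Hence, for each admissible $\varepsilon$, $\mathcal{R}$ contains the convex hull of $\{(0,0),(1,1)\}\cup\Sigma_\varepsilon$, and in particular the interior of the triangle $T_\varepsilon$ with vertices $(0,0)$, $(1,1)$, $R_\varepsilon$, as well as the closed segment from $(0,0)$ to $(1,1)$. It remains to let $\varepsilon\to0^+$: since $R_\varepsilon\to V:=(\tfrac{3n+1}{3n+4},\tfrac{3}{3n+4})$, the triangles $T_\varepsilon$ exhaust the interior of the triangle $T$ with vertices $(0,0)$, $(1,1)$, $V$. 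Indeed, a point strictly inside $T$ is strictly below the diagonal, strictly above the segment from $(0,0)$ to $V$, and strictly on the appropriate side of the line through $(1,1)$ and $V$; each of these constraints is open and varies continuously if $V$ is replaced by $R_\varepsilon$, so the point lies in the interior of $T_\varepsilon$ for all small enough $\varepsilon$. Combining, $\mathcal{R}$ contains the interior of $T$ together with the closed diagonal segment, which is exactly the region $\mathbf{S}_n'$ of Figure~\ref{figureaSH}.

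All the substance is contained in Theorem~\ref{thm:Lp}; the corollary itself is routine interpolation plus planar geometry, and I do not anticipate any genuine obstacle. The points deserving (elementary) attention are bookkeeping ones: that the left endpoints of the $\Sigma_\varepsilon$ already lie on the diagonal, so no separate argument is needed there; that only the \emph{open} triangle is obtained, the two slanted edges through $V$ being unattained limits of endpoints of the open segments $\Sigma_\varepsilon$; and that the complex interpolation is legitimate down to the endpoint exponents $p=1$ and $p=\infty$ used for the diagonal.
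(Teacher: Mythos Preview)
Your proposal is correct and follows essentially the same route as the paper: combine Theorem~\ref{thm:Lp} with the trivial diagonal bounds $\|A_1f\|_p\le\|f\|_p$ and interpolate, then let $\varepsilon\to 0$ to fill the open triangle. The only cosmetic difference is that the paper invokes Marcinkiewicz interpolation from the endpoints $(0,0)$ and $(1,1)$, whereas you use Riesz--Thorin and phrase the argument via convexity of the boundedness region; both are standard and yield the same conclusion.
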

\begin{proof}
The result follows from Theorem \ref{thm:Lp} after applying Marcinkiewicz interpolation theorem with the obvious estimates $\|A_1f\|_1\le \|f\|_1$ and
$\|A_1f\|_{\infty}\le \|f\|_{\infty}$. 
\end{proof}

\begin{figure}[t]
\includegraphics[scale=1]{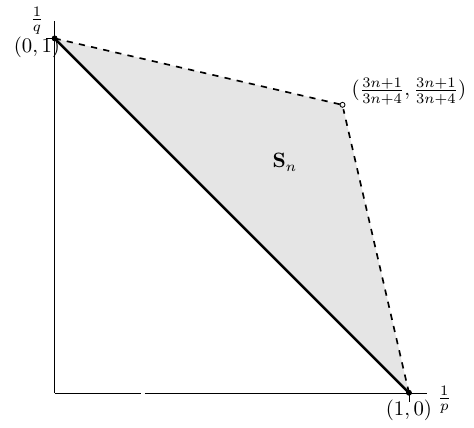}
\includegraphics[scale=1]{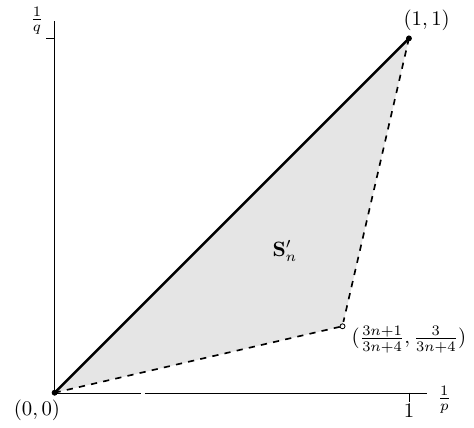}
\caption{Triangle $\mathbf{S}_n'$ shows the region for $L^p-L^q$ estimates for $A_1$. The dual triangle $\mathbf{S}_n$ is on the left.}
\label{figureaSH}
\end{figure}

\begin{rem}
\label{rem:restrno}
Observe that the results in this section are valid for dimensions $n\ge1$. The restriction $n\ge2$ will arise in Proposition \ref{prop:continuity} as a consequence of the restriction of the parameter $\delta$ on the available estimates for the Laguerre functions in Lemmas~\ref{lem:T} and \ref{lem:uniform}, which are sharp. Consequently, the rest of results from Proposition \ref{prop:continuity} on, and in particular the main results in this paper (Theorems \ref{thm:spherical} and \ref{thm:mainSH}), are restricted to dimensions $n=2$ and higher. 
\end{rem}

\section{The continuity property of the spherical means}
\label{sec:continuity}

In the work of Lacey \cite{Lacey} dealing with the spherical maximal function on $ \R^n $, the continuity property of the spherical mean value operator, described in the Introduction, played a crucial role in getting the sparse bounds for the spherical maximal function. It was obtained by combining the $ L^p-L^q $ estimates and $L^2$ estimates that were easily deduced from the known decay estimates of the Fourier multiplier associated to the spherical means.
 In the case of the Heisenberg group, the analogous property for $A_r$ is stated in Corollary~\ref{cor:dilat2} below. In order to achieve Corollary~\ref{cor:dilat2}, we will appeal to the $L^p$ improving estimates in Corollary \ref{cor:LpLq} along with suitable $L^2$ estimates. But in our setting, these $L^2$ estimates are not that immediate to obtain, since the associated multiplier  is an operator-valued function. This means that we are led to prove good  decay estimates on the norm of an  operator-valued function, which is  nontrivial.

In what follows, for $x=(z,t)\in \H^n$, we will denote by $ |x| = |(z,t)| = (|z|^4+t^2)^{1/4}$ the Koranyi norm on $ \H^n $.
\begin{prop}
\label{prop:continuity}
Assume that $ n \geq 2.$ Then for $ y \in \H^n, |y| \leq 1, $ we have
$$  \| A_1 - A_1 \tau_y \|_{L^2\to L^2} \leq C |y| $$
where $ \tau_y f(x) = f(xy^{-1}) $ is the right translation operator.
\end{prop}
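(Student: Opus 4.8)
The plan is to pass to the group Fourier transform and reduce the estimate to a uniform‑in‑$\lambda$ operator‑norm bound for the multiplier of $A_1-A_1\tau_y$. Since $A_1f=f\ast\mu_1$ is a right convolution operator one has $\widehat{A_1f}(\lambda)=\widehat f(\lambda)\widehat{\mu}_1(\lambda)$, while a change of variables and the (bi‑)invariance of the Haar measure give $\widehat{\tau_yf}(\lambda)=\widehat f(\lambda)\pi_\lambda(y)$; hence
$$
\widehat{(A_1-A_1\tau_y)f}(\lambda)=\widehat f(\lambda)\big(I-\pi_\lambda(y)\big)\widehat{\mu}_1(\lambda).
$$
By the Plancherel theorem on $\H^n$ (see \cite{STH}) together with $\|\widehat f(\lambda)M\|_{\mathrm{HS}}\le\|\widehat f(\lambda)\|_{\mathrm{HS}}\|M\|_{\mathrm{op}}$, the proposition will follow once we prove
$$
\sup_{\lambda\neq0}\big\|\big(I-\pi_\lambda(y)\big)\widehat{\mu}_1(\lambda)\big\|_{\mathrm{op}}\le C|y|,\qquad |y|\le1.
$$

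To exploit that $y$ is close to the identity, I would write $y=\exp V$ in exponential coordinates, so that $\pi_\lambda(y)=e^{d\pi_\lambda(V)}$ with $d\pi_\lambda(V)$ skew‑adjoint; the fundamental theorem of calculus along $u\mapsto\exp(uV)$ gives $\pi_\lambda(y)-I=\big(\int_0^1\pi_\lambda(\exp(uV))\,du\big)\,d\pi_\lambda(V)$, and since $\pi_\lambda$ is unitary,
$$
\big\|\big(I-\pi_\lambda(y)\big)\widehat{\mu}_1(\lambda)\big\|_{\mathrm{op}}\le\big\|d\pi_\lambda(V)\,\widehat{\mu}_1(\lambda)\big\|_{\mathrm{op}}
$$
(this identity is first checked on finite linear combinations of Hermite functions, which are preserved by $\widehat{\mu}_1(\lambda)$, and then extended by density). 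Writing $y=(w,s)$ with $w=a+ib$ and decomposing $d\pi_\lambda(V)=\sum_j a_j\,d\pi_\lambda(X_j)+\sum_j b_j\,d\pi_\lambda(Y_j)+s\,d\pi_\lambda(T)$, where \eqref{eq:pilambda} yields $d\pi_\lambda(X_j)=i\lambda\xi_j$, $d\pi_\lambda(Y_j)=\partial_{\xi_j}$, $d\pi_\lambda(T)=i\lambda$, and using $|a_j|,|b_j|\le|w|\le|y|$ and $|s|\le|y|^2\le|y|$, it is enough to establish the three uniform bounds
$$
\|\lambda\xi_j\,\widehat{\mu}_1(\lambda)\|_{\mathrm{op}}\le C,\qquad \|\partial_{\xi_j}\,\widehat{\mu}_1(\lambda)\|_{\mathrm{op}}\le C,\qquad |\lambda|\,\|\widehat{\mu}_1(\lambda)\|_{\mathrm{op}}\le C.
$$

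The last bound is immediate from $\|\widehat{\mu}_1(\lambda)\|_{\mathrm{op}}=\sup_k|\psi_k^{n-1}(\sqrt{|\lambda|})|$ and Lemma \ref{lem:uniform} with $\delta=n-1$, since $|\lambda|\cdot|\lambda|^{-(n-1)-1/3}\le1$ for $|\lambda|\ge1$ and $n\ge1$. For the first two, I would express $\xi_j$ and $\partial_{\xi_j}$ through the creation and annihilation operators $A_j(\lambda)=\partial_{\xi_j}+|\lambda|\xi_j$, $A_j(\lambda)^\ast=-\partial_{\xi_j}+|\lambda|\xi_j$, which shift the Hermite level by $\mp1$ and satisfy $\|A_j(\lambda)P_k(\lambda)\|,\ \|A_j(\lambda)^\ast P_k(\lambda)\|\le C(k|\lambda|)^{1/2}$; expanding $\widehat{\mu}_1(\lambda)=\sum_k\psi_k^{n-1}(\sqrt{|\lambda|})P_k(\lambda)$ in the Hermite basis and using near‑orthogonality of shifted levels gives
$$
\|\lambda\xi_j\,\widehat{\mu}_1(\lambda)\|_{\mathrm{op}}+\|\partial_{\xi_j}\,\widehat{\mu}_1(\lambda)\|_{\mathrm{op}}\le C\sup_k\big((k+1)|\lambda|\big)^{1/2}\,|\psi_k^{n-1}(\sqrt{|\lambda|})|.
$$
For $|\lambda|\ge1$ the supremum on the right is $\le C|\lambda|^{-(n-1)+2/3}\le C$ by Lemma \ref{lem:uniform2}, whose hypothesis $\delta\ge\tfrac12$ forces $n-1\ge\tfrac12$, i.e. $n\ge2$ — this is exactly where the dimensional restriction enters; for $|\lambda|\le1$ the same bound by a constant follows directly from the size estimates for $\mathcal L_k^{n-1}$ in Lemma \ref{lem:T}. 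Collecting the three bounds yields $\|d\pi_\lambda(V)\widehat{\mu}_1(\lambda)\|_{\mathrm{op}}\le C|y|$ uniformly in $\lambda$, and Plancherel finishes the proof. The main obstacle is precisely this final step: passing from the operator‑valued multiplier to the scalar Laguerre quantities via the Hermite creation/annihilation calculus and then invoking the sharp decay of Lemma \ref{lem:uniform2}; the central‑direction symbol $i\lambda s$, whose size $|\lambda s|$ grows in $\lambda$, is salvaged only by the faster operator‑norm decay of $\widehat{\mu}_1(\lambda)$.
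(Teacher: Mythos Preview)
Your argument is correct and follows essentially the same path as the paper: reduce via Plancherel to a uniform operator-norm bound on $(I-\pi_\lambda(y))\widehat{\mu}_1(\lambda)$, then pass to the Hermite creation/annihilation calculus and invoke Lemmas~\ref{lem:uniform} and~\ref{lem:uniform2}. The one genuine difference is the reduction step: the paper uses the metaplectic representation to rotate $y=(z,t)$ to $(|z|e_1,t)$, so only the multiplication operator $\lambda\xi_1$ appears; you instead linearise $\pi_\lambda(y)-I$ via the exponential map and treat all generators $d\pi_\lambda(X_j)=i\lambda\xi_j$, $d\pi_\lambda(Y_j)=\partial_{\xi_j}$, $d\pi_\lambda(T)=i\lambda$ directly. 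Your route is slightly more elementary in that it avoids invoking Stone--von Neumann and the metaplectic machinery, at the cost of having to bound both $\lambda\xi_j\widehat{\mu}_1(\lambda)$ and $\partial_{\xi_j}\widehat{\mu}_1(\lambda)$ rather than just the former --- but since both reduce to $H(\lambda)^{1/2}\widehat{\mu}_1(\lambda)$ via $A_j(\lambda),A_j(\lambda)^\ast$, this is no extra work. The small-$|\lambda|$ case you dispatch in one line does require a short check (splitting according to the ranges of Lemma~\ref{lem:T}), but it goes through for $n\ge2$ exactly as you indicate.
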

\begin{proof}
For $ f \in L^2(\H^n) $ we estimate the $ L^2 $ norm of  $ A_1f -A_1(\tau_yf) $ using Plancherel theorem for the Fourier transform on $ \H^n.$ Recall that $ A_1f(x) = f \ast \mu_1(x) $ so that $ \widehat{A_1f}(\lambda) =  \widehat{f}(\lambda)\widehat{\mu_1}(\lambda)$, where $ \widehat{\mu_1}(\lambda) $ is explicitly given by
$$ \widehat{\mu_1}(\lambda) = \sum_{k=0}^\infty \psi_k^{n-1}(\sqrt{|\lambda|}) P_k(\lambda) .$$
We also have 
$$ \widehat{\tau_yf}(\lambda) = \int_{\H^n} f(xy^{-1}) \pi_\lambda(x) dx = \widehat{f}(\lambda)\pi_\lambda(y).$$
Thus by the Plancherel theorem for the Fourier transform we have
$$ \| A_1f -A_1(\tau_yf)\|_2^2 =  c_n \int_{-\infty}^\infty \| \widehat{f}(\lambda)(I-\pi_\lambda(y))\widehat{\mu_1}(\lambda)\|_{\operatorname{HS}}^2 |\lambda |^n d\lambda.$$
Since the space of all Hilbert-Schmidt operators is a two sided ideal in the space of all bounded linear operators, it is enough to estimate the operator norm of $ (I-\pi_\lambda(y))\widehat{\mu_1}(\lambda)$ (for more about Hilbert-Schmidt operators see \cite{VSS}). Again, $ \widehat{\mu_1}(\lambda) $ is self adjoint and $ \pi_\lambda(y)^\ast = \pi_\lambda(y^{-1})$ and so we will estimate $ \widehat{\mu_1}(\lambda) (I-\pi_\lambda(y)).$

We make use of the fact that for every $ \sigma \in U(n) $ there is a unitary operator $ \mu_\lambda(\sigma) $ acting on $ L^2(\R^n) $ such that $ \pi_\lambda(\sigma z, t) = \mu_\lambda(\sigma)^\ast \pi_\lambda(z,t) \mu_\lambda(\sigma) $ for all $ (z,t) \in \H^n$. Indeed, this follows from the well known Stone--von Neumann theorem which says that any irreducible unitary representation of the Heisenberg group which acts like $ e^{i\lambda t}I $ when restricted to the center is unitarily equivalent to $ \pi_\lambda$. Moreover, $ \mu_\lambda $ has an extension to a double cover of the symplectic group as a unitary representation and is called the metaplectic representation, see \cite[Chapter 4, Section 2]{F}.

Given $ y = (z, t) \in \H^n $ we can choose $ \sigma \in U(n) $ such that $ y =(|z|\sigma e_1,t) $ where $ e_1 = (1,0,....,0).$ Thus 
$$ \pi_\lambda(y) = \mu_\lambda(\sigma)^\ast \pi_\lambda(|z|e_1,t)\mu_\lambda(\sigma).$$ Also, it is well known that $ \mu_\lambda(\sigma) $ commutes with functions of the Hermite operator $H(\lambda)$ given in \eqref{eq:Hl}. Since $ \widehat{\mu_1}(\lambda) $ is a function of $ H(\lambda) $ it follows that
$$ \widehat{\mu_1}(\lambda) (I-\pi_\lambda(z,t)) =\mu_\lambda(\sigma)^\ast  \widehat{\mu_1}(\lambda) (I-\pi_\lambda(|z|e_1,t))\mu_\lambda(\sigma).$$ 
Thus it is enough to estimate the operator norm of $ \widehat{\mu_1}(\lambda) (I-\pi_\lambda(|z|e_1,t)).$ In view of the factorisation $ \pi_\lambda(|z|e_1,t) = \pi_\lambda(|z|e_1,0) \pi_\lambda(0,t) $ we have that
\begin{multline*}
I-\pi_\lambda(|z|e_1,t)\\
=I- \pi_\lambda(|z|e_1,0) \pi_\lambda(0,t)=(I- \pi_\lambda(0,t))+(I-\pi_\lambda(|z|e_1,0)) \pi_\lambda(0,t)
\end{multline*}
so it suffices to estimate the norms of $ \widehat{\mu_1}(\lambda) (I-\pi_\lambda(0,t)) $ and $\widehat{\mu_1}(\lambda) (I-\pi_\lambda(|z|e_1,0)) \pi_\lambda(0,t) $  separately. Moreover, we only have to estimate them for  $ |\lambda| \geq 1$  as they are uniformly bounded  for $ |\lambda| \leq 1.$

Assuming $ |\lambda| \geq 1 $ we have, in view of \eqref{eq:pilambda},
$$\widehat{\mu_1}(\lambda) (I-\pi_\lambda(0,t)) \varphi(\xi) = (1-e^{i\lambda t}) \widehat{\mu_1}(\lambda)\varphi(\xi), \quad  \varphi \in L^2(\R^n).
$$
By mean value theorem, the operator norm of $ (1-e^{i\lambda t}) \widehat{\mu_1}(\lambda) $ is bounded by 
$$ C |t| |\lambda| \sup_{k} |\psi_k^{n-1}(\sqrt{|\lambda|})| \leq  C |t| |\lambda|^{-(n-1)+2/3}$$
where we have used the estimate in Lemma \ref{lem:uniform} (for $\alpha=1$). Thus for  $ n \geq 2 ,$ 
$$ \|\widehat{\mu_1}(\lambda) (I-\pi_\lambda(0,t)) \|_{L^2\to L^2} \leq C |t| \leq C |(z,t)|^2.
$$ 
In order to estimate $\widehat{\mu_1}(\lambda) (I-\pi_\lambda(|z|e_1,0))$ we recall that
$$  \pi_\lambda(|z|e_1,0)\varphi(\xi) = e^{i\lambda |z| \xi_1} \varphi(\xi), \quad   \varphi \in L^2(\R^n).
$$
Since we can write
$$ (1-e^{i\lambda |z| \xi_1}) = -i\lambda |z| \xi_1 \int_0^1 e^{it\lambda |z| \xi_1} dt = \lambda |z| \xi_1 m_\lambda(|z|, \xi) $$
with a bounded function $ m_\lambda(|z|,\xi), $ it is enough to estimate the norm of the operator $ |z|  \widehat{\mu_1}(\lambda) M_\lambda $ where $ M_\lambda \varphi (\xi) = \lambda \xi_1 \varphi(\xi).$

Let $ A(\lambda) = \frac{\partial}{\partial \xi_1}+|\lambda| \xi_1 $ and $ A(\lambda)^\ast = -\frac{\partial}{\partial \xi_1}+|\lambda| \xi_1 $ be the annihilation and creation operators, so that we can express $ M_\lambda $ as 
$ M_\lambda = \frac{1}{2} ( A(\lambda) + A(\lambda)^\ast) .$ Thus it is enough to consider $ |z| \widehat{\mu_1}(\lambda) A(\lambda) $ and $ |z| \widehat{\mu_1}(\lambda) A(\lambda)^\ast. $  Moreover, as the Riesz transforms 
$ H(\lambda)^{-1/2}A(\lambda) $ and $ H(\lambda)^{-1/2}A(\lambda)^\ast $ are bounded on $ L^2(\R^n) $ we only need to consider $ |z|\widehat{\mu_1}(\lambda) H(\lambda)^{1/2}.$ But the operator norm of $
\widehat{\mu_1}(\lambda) H(\lambda)^{1/2} $ is given by $\sup_{k} ((2k+n)|\lambda|)^{1/2} |\psi_k^{n-1}(\sqrt{|\lambda|})|$ which, in view of Lemma \ref{lem:uniform} (for $\alpha=1/2$), is bounded by $ C |\lambda|^{-(n-1)+2/3}.$ Thus for $ n \geq 2 $ we obtain
$$  \|\widehat{\mu_1}(\lambda) (I-\pi_\lambda(|z|e_1,0))\|_{L^2\to L^2}  \leq C |z| \leq C |(z,t)|.$$
This completes the proof of the proposition.
\end{proof}

\begin{rem}
\label{rem:restrn1}
Observe that the result above is restricted to the case $n\ge2$, and this is due to the restriction on the available sharp estimates for the Laguerre functions, see Lemmas~\ref{lem:T} and \ref{lem:uniform} (in particular,  we are using Lemma  \ref{lem:uniform} with $\delta=n-1$). We do not know whether there is a way to reach $n=1$ with our approach.
\end{rem}

\begin{cor}
\label{cor:A1}
Assume that $n\ge 2$. Then for $ y \in \H^n$, $|y| \leq 1$, and for $\big(\frac1p,\frac1q\big)$ in the interior of the triangle joining the points $(0,0), (1,1)$ and $\big(\frac{3n+1}{3n+4},\frac{3}{3n+4}\big)$, there exists $0<\nu<1$ such that we have the inequality
$$  
\| A_1 - A_1 \tau_y \|_{L^p\to L^q} \leq C |y|^{\nu},
$$
where $ \tau_y f(x) = f(xy^{-1}) $ is the right translation operator.

\end{cor}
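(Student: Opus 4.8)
The plan is to obtain Corollary~\ref{cor:A1} by interpolating the $L^2$ continuity estimate of Proposition~\ref{prop:continuity} against the $L^p$ improving bounds of Corollary~\ref{cor:LpLq}. The key observation is that $A_1-A_1\tau_y$ is a fixed linear operator, so one may apply Riesz--Thorin (equivalently, Stein's interpolation theorem) directly to it, with the parameter $y$ frozen.

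First I would record a crude uniform bound. Since $\H^n$ is unimodular, the right translation $\tau_y$ is an isometry of $L^p(\H^n)$ for every $1\le p\le\infty$; consequently, for any $(\frac1p,\frac1q)$ in the region covered by Corollary~\ref{cor:LpLq} (the open triangle $\mathbf{S}_n'$ together with the diagonal segment joining $(0,0)$ and $(1,1)$),
$$
\|A_1-A_1\tau_y\|_{L^p\to L^q}\le \|A_1\|_{L^p\to L^q}+\|A_1\tau_y\|_{L^p\to L^q}\le 2\,\|A_1\|_{L^p\to L^q}\le C,
$$
with $C$ independent of $y$. In particular this holds at the midpoint $(\tfrac12,\tfrac12)$, where moreover Proposition~\ref{prop:continuity} supplies the sharper estimate $\|A_1-A_1\tau_y\|_{L^2\to L^2}\le C|y|$.

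Next comes the interpolation step. Fix an interior point $P=(\frac1p,\frac1q)$ of $\mathbf{S}_n'$, and let $P_0=(\tfrac12,\tfrac12)$, which lies on the edge of $\mathbf{S}_n'$ contained in the diagonal. Since $\mathbf{S}_n'$ is open and convex and $P_0\in\overline{\mathbf{S}_n'}$, the half-open segment $(P_0,P]$ is contained in $\mathbf{S}_n'$; prolonging the segment $[P_0,P]$ past $P$, it exits $\overline{\mathbf{S}_n'}$ at some point $P_1$, with $P$ strictly between $P_0$ and $P_1$. Choosing any $P_1'=(\frac1{p_1},\frac1{q_1})$ in the open segment $(P,P_1)$, one has $P_1'\in\mathbf{S}_n'$ and $P=(1-\theta)P_0+\theta P_1'$ for some $\theta\in(0,1)$ that depends only on the position of $P$ and not on $y$. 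Applying Riesz--Thorin interpolation to $T:=A_1-A_1\tau_y$ between the estimate at $P_0$ (an $L^2\to L^2$ bound of size $\lesssim|y|$) and the estimate at $P_1'$ (an $L^{p_1}\to L^{q_1}$ bound of size $\lesssim 1$) then yields
$$
\|A_1-A_1\tau_y\|_{L^p\to L^q}\le \big(C|y|\big)^{1-\theta}\,C^{\theta}=C\,|y|^{1-\theta},
$$
so the corollary holds with $\eta:=1-\theta\in(0,1)$.

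I expect no serious obstacle here: the analytically substantial input, namely the operator-norm decay $\|A_1-A_1\tau_y\|_{L^2\to L^2}\lesssim|y|$, is exactly Proposition~\ref{prop:continuity}, which is already in hand, and the interpolation lemma it leans on (the metaplectic reduction and the sharp Laguerre asymptotics of Lemmas~\ref{lem:uniform}--\ref{lem:uniform2}) is where all the real work lives. The remaining argument is soft; the only points meriting a line of care are the unimodularity of $\H^n$ (so that $\tau_y$ is an $L^p$ isometry, whence the uniform bound) and the elementary planar geometry that exhibits every interior point of $\mathbf{S}_n'$ as an interpolation point carrying strictly positive weight on the diagonal endpoint $(\tfrac12,\tfrac12)$ — this positivity is what forces $\eta<1$, while $\eta>0$ is just $P\ne P_0$.
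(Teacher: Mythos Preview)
Your proof is correct and follows essentially the same approach as the paper, which simply states that the result follows by Riesz--Thorin interpolation from Corollary~\ref{cor:LpLq} and Proposition~\ref{prop:continuity}. You have merely spelled out the (soft) details the paper omits: the uniform bound coming from the $L^p$ isometry of $\tau_y$ and the elementary convex-geometry argument placing each interior point of $\mathbf{S}_n'$ on a segment with endpoint $(\tfrac12,\tfrac12)$.
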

\begin{proof}
The result follows by Riesz-Thorin interpolation theorem, taking into account Corollary \ref{cor:LpLq} and Proposition \ref{prop:continuity}. 
\end{proof}

We need a  version of the inequality in Corollary \ref{cor:A1} when $ A_1 $ is replaced by $A_r$. This can be easily achieved by making use of the following lemma which expresses $ A_r $ in terms of $ A_1$. Let $\delta_r\varphi(w,t)=\varphi(rw,r^2t)$ stand for the non-isotropic dilation on $ \H^n$.
\begin{lem}
\label{lem:dilat}
For any  $r> 0$  we have $A_rf=\delta_r^{-1}A_1\delta_rf.$
\end{lem}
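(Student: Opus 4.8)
The plan is to verify the identity $A_rf = \delta_r^{-1} A_1 \delta_r f$ by a direct change of variables in the defining integral \eqref{eq:defin}. First I would write out both sides explicitly. Recalling that $\delta_r\varphi(w,s)=\varphi(rw,r^2s)$, we have $\delta_r^{-1}\psi(z,t) = \psi(r^{-1}z, r^{-2}t)$, so that
$$
\delta_r^{-1}A_1\delta_r f(z,t) = (A_1\delta_r f)\big(r^{-1}z, r^{-2}t\big) = \int_{|w|=1} (\delta_r f)\Big(r^{-1}z - w, \; r^{-2}t - \tfrac12 \Im (r^{-1}z)\cdot \overline{w}\Big)\, d\mu_1(w).
$$
Applying the definition of $\delta_r f$ inside, the argument becomes $\big(r(r^{-1}z - w), \; r^2(r^{-2}t - \tfrac12 r^{-1}\Im z\cdot\overline{w})\big) = \big(z - rw, \; t - \tfrac{r}{2}\Im z\cdot \overline{w}\big)$.

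Next I would perform the substitution $w' = rw$, which maps the unit sphere $\{|w|=1\}$ onto $\{|w'|=r\}=S_r$ and carries the normalised surface measure $d\mu_1(w)$ to the normalised surface measure $d\mu_r(w')$ (normalisation is preserved since both are probability measures). Under this substitution $rw = w'$ and $\tfrac{r}{2}\Im z\cdot\overline{w} = \tfrac12 \Im z\cdot\overline{w'}$, so the integral becomes
$$
\int_{|w'|=r} f\Big(z - w', \; t - \tfrac12 \Im z\cdot\overline{w'}\Big)\, d\mu_r(w') = A_rf(z,t),
$$
which is exactly \eqref{eq:defin}. This completes the verification.

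There is no real obstacle here; the only point requiring a word of care is the behaviour of the surface measure under the scaling $w\mapsto rw$ — one should note that we use the \emph{normalised} measures throughout, so no Jacobian factor $r^{2n-1}$ survives, and the identity holds as stated. (Alternatively, one could give a spectral proof using \eqref{eq:expression}: replacing $r$ by $1$ and tracking the effect of $\delta_r$ on $f^\lambda\ast_\lambda\varphi_k^\lambda$ and on the factor $\psi_k^{n-1}(\sqrt{|\lambda|}\,r)$ via the substitution $\lambda\mapsto r^2\lambda$, but the change-of-variables argument above is shorter and more transparent.)
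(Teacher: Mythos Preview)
Your proof is correct. The change-of-variables argument is clean and the only delicate point---that the \emph{normalised} surface measures are carried to one another under the scaling $w\mapsto rw$, so no Jacobian factor appears---is handled explicitly.

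The paper takes a different route: it works from the spectral expansion \eqref{eq:expression}, changes variables $\lambda\mapsto\lambda/r^2$, and then computes how $f^{\lambda/r^2}\ast_{\lambda/r^2}\varphi_k^{\lambda/r^2}(z)$ relates to $(\delta_r f)^\lambda\ast_\lambda\varphi_k^\lambda(z/r)$ by unwinding the definitions of the twisted convolution and the partial inverse Fourier transform. Your direct approach via the defining integral \eqref{eq:defin} is shorter and more transparent; indeed, the paper itself uses essentially your argument later on (in the proof of Lemma~\ref{lem: scal}) when proving the closely related scaling identity $A_rf=\delta_{\ell^{-1}}A_{r'}\delta_\ell f$ for $r=\ell r'$. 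The spectral proof buys nothing extra here beyond consistency with the surrounding machinery; your alternative is perfectly adequate and arguably preferable.
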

\begin{proof}
This is just an easy verification.  Since
$$ A_1\delta_rf(z,t)  = \int_{|\omega| =1} f(rz-r\omega, r^2t-\frac{1}{2}r^2 \Im(z \cdot \bar{\omega})) d\mu_1(\omega)$$ it follows immediately
$$ (\delta_r^{-1} A_1 \delta_rf)(z,t) = \int_{|\omega| =1} f(z-r\omega, t-\frac{1}{2}r \Im(z \cdot \bar{\omega})) d\mu_1(\omega) = A_rf(z,t).$$  

\end{proof}

\begin{cor}
\label{cor:dilat2}
Assume that $n\ge 2$. Then for $ y \in \H^n, |y| \leq 1$, and for $\big(\frac1p,\frac1q\big)$ in the interior of the triangle joining the points $(0,0), (1,1)$ and $\big(\frac{3n+1}{3n+4},\frac{3}{3n+4}\big)$, there exists $0<\nu<1$ such that  we have the inequality
$$  
\| A_r - A_r \tau_y \|_{L^p\to L^q} \leq C r^{-\nu} |y|^{\nu}  r^{(2n+2)(\frac{1}{q}-\frac{1}{p})}.
$$
\end{cor}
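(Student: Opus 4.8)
The plan is to derive Corollary~\ref{cor:dilat2} from Corollary~\ref{cor:A1} by exploiting the dilation identity $A_r=\delta_r^{-1}A_1\delta_r$ of Lemma~\ref{lem:dilat}, together with the elementary scaling behaviour of $L^p$ norms on $\H^n$ under the non-isotropic dilations $\delta_r$. First I would record the two facts I need: (i) $\delta_r$ conjugates translations into translations, namely $\tau_y\circ\delta_r=\delta_r\circ\tau_{\delta_{r^{-1}}y}$ (or the analogous identity with the roles arranged so that $A_r-A_r\tau_y=\delta_r^{-1}(A_1-A_1\tau_{\delta_{1/r}y})\delta_r$), and (ii) the homogeneity $\|\delta_r f\|_{L^p(\H^n)}=r^{-(2n+2)/p}\|f\|_{L^p(\H^n)}$, since the Haar measure $dz\,dt$ scales by $r^{2n+2}$ under $(z,t)\mapsto(rz,r^2t)$.

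The main computation is then a bookkeeping of exponents. Writing $A_r-A_r\tau_y=\delta_r^{-1}\bigl(A_1-A_1\tau_{y'}\bigr)\delta_r$ with $y'=\delta_{r^{-1}}y$, one has, for $f\in L^p$,
\begin{align*}
\|(A_r-A_r\tau_y)f\|_{L^q}
&=\|\delta_r^{-1}(A_1-A_1\tau_{y'})\delta_r f\|_{L^q}
=r^{(2n+2)/q}\|(A_1-A_1\tau_{y'})\delta_r f\|_{L^q}\\
&\le C\, r^{(2n+2)/q}\,|y'|^{\eta}\,\|\delta_r f\|_{L^p}
= C\, r^{(2n+2)/q}\,|y'|^{\eta}\, r^{-(2n+2)/p}\|f\|_{L^p},
\end{align*}
using Corollary~\ref{cor:A1} in the middle step. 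Finally, since the Koranyi norm is homogeneous of degree one with respect to $\delta_r$, one has $|y'|=|\delta_{r^{-1}}y|=r^{-1}|y|$, so $|y'|^{\eta}=r^{-\eta}|y|^{\eta}$, and collecting the powers of $r$ yields exactly the claimed bound $C\,r^{-\eta}|y|^{\eta}\,r^{(2n+2)(1/q-1/p)}$. One should note that in order to apply Corollary~\ref{cor:A1} we need $|y'|\le 1$, i.e. $|y|\le r$; for the application to the lacunary maximal function $r=\delta^j$ ranges over a lacunary sequence and one only needs $|y|$ small relative to $r$, so I would either state the corollary under the hypothesis $|y|\le r$ or simply remark that the relevant regime is $|y|$ comparable to or smaller than $r$.

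I do not expect a genuine obstacle here: the only subtlety is getting the direction of the intertwining identity $\tau\circ\delta=\delta\circ\tau$ correct (the conjugated translation parameter must be $\delta_{r^{-1}}y$, not $\delta_r y$, for the exponents to come out right), and making sure the Koranyi norm homogeneity $|\delta_s x|=s|x|$ is invoked with the correct power. Everything else is the routine scaling of Lebesgue norms. I would present the proof as two or three displayed lines essentially as above, citing Lemma~\ref{lem:dilat} for the dilation identity and Corollary~\ref{cor:A1} for the base case.
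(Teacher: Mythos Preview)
Your proposal is correct and follows exactly the same approach as the paper: conjugate $A_r$ to $A_1$ via Lemma~\ref{lem:dilat}, use the intertwining $\delta_r\tau_y=\tau_{\delta_r^{-1}y}\delta_r$, the Koranyi-norm homogeneity $|\delta_{r^{-1}}y|=r^{-1}|y|$, and the scaling $\|\delta_r f\|_p=r^{-(2n+2)/p}\|f\|_p$ to reduce to Corollary~\ref{cor:A1}. You also correctly flag that the appeal to Corollary~\ref{cor:A1} really requires $|y|\le r$ rather than $|y|\le 1$; the paper's proof glosses over this, but the only application (Lemma~\ref{lem:anal23}) is indeed under the hypothesis $|y|<r$, so your observation is well taken.
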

\begin{proof} Observe that $ \delta_r( \tau_y f) = \tau_{\delta_r^{-1}y}(\delta_r f )$, which follows from the fact that $ \delta_r : \H^n \rightarrow \H^n $ is an automorphism. The corollary follows from Corollary \ref{cor:A1},  Lemma \ref{lem:dilat}, and the fact that $ \|\delta_r f\|_p = r^{-\frac{(2n+2)}{p}}$ for any $  1 \leq p < \infty.$
\end{proof}

\section{Sparse bounds for the lacunary spherical maximal function}
\label{sec:sparse}

Our aim in this section is to prove the sparse bounds for the lacunary spherical maximal function stated in Theorem \ref{thm:mainSH}. In doing so we closely follow \cite{Lacey} with suitable modifications that are necessary since we are dealing with a non-commutative set up. We can equip $ \H^n $ with a metric induced by the Koranyi norm which makes it a space of homogeneous type. On such spaces there is a well defined notion of dyadic cubes and grids with properties similar to their counter parts in the Euclidean setting. However, we need to be careful with the metric we choose since the group is non-commutative.

Recall that the Koranyi norm on $ \H^n $ is defined by $ |x| = |(z,t)| = (|z|^4+t^2)^{1/4}$, which is homogeneous of degree one with respect to the non-isotropic dilations. Since we are considering $ f \ast \mu_r $ it is necessary to work with the left invariant metric $ d_L(x,y) = |x^{-1}y| = d_L(0, x^{-1}y)$ instead of the standard metric $ d(x,y) = |xy^{-1}| = d(0, xy^{-1})$, which is right invariant. The balls and cubes are then defined using $ d_L $. Thus $ B(a,r) = \{ x \in \H^n: |a^{-1}x| < r \}.$ With this definition we note that $ B(a,r) =a\cdot B(0,r) $, a fact which is crucial. This allows us to conclude that when $ f $ is supported in $ B(a,r) $ then $ f \ast \mu_s $ is supported in $ B(a,r+s).$ Indeed, as the support of $ \mu_s $ is contained in $ B(0,s) $ we see that  $ f \ast \mu_s $ is supported in $ B(a,r)\cdot B(0,s) \subset a\cdot B(0,r)\cdot B(0,s) \subset B(a,r+s).$
\begin{thm}
\label{thm:homogrid}
Let ${\eta}\in (0,1)$ with ${\eta}\le 1/96$. Then there exists a countable set of points $\{z_{\nu}^{k,\alpha} : \nu\in \mathscr{A}_k\}$, $k\in \Z$, $\alpha=1,2,\ldots,N$ and a finite number of dyadic systems $\mathcal{D}^{\alpha}:=\cup_{k\in \Z}\mathcal{D}_k^{\alpha}$, $\mathcal{D}_k^{\alpha}:=\{Q_{\nu}^{k,\alpha}:\nu\in \mathscr{A}_k\}$ such that
\begin{enumerate}
\item For every $\alpha\in \{1,2,\ldots, N\}$ and $k\in \Z$ we have
\begin{itemize}
\item[i)] $\H^n=\cup_{Q\in \mathcal{D}_k^{\alpha}}Q$ (disjoint union).
\item[ii)] $Q,P\in \mathcal{D}^{\alpha}\Rightarrow Q\cap P\in \{\emptyset, P, Q\}$.
\item[iii)] $Q_{\nu}^{k,\alpha}\in \mathcal{D}^{\alpha}\Rightarrow B\big(z_{\nu}^{k,\alpha}, \frac{1}{12}{\eta}^k\big)\subseteq Q_{\nu}^{k,\alpha}\subseteq B\big(z_{\nu}^{k,\alpha}, 4{\eta}^k\big)$. In this situation $z_\nu^{k,\alpha} $ is called the center of the cube and the side length $\ell{ (Q_\nu^{k,\alpha})}$ is defined to be ${\eta}^k.$
\end{itemize}
\item For every ball $B=B(x,r)$, there exists a cube $Q_B\in \cup_{\alpha}\mathcal{D}^{\alpha}$ such that $B\subseteq Q_B$ and $\ell(Q_B)={\eta}^{k-1}$, where $k$ is the unique integer such that ${\eta}^{k+1}<r\le {\eta}^k.$ 
\end{enumerate}
\end{thm}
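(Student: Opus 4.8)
The plan is to invoke the by-now-standard machinery for constructing adjacent dyadic systems on spaces of homogeneous type (in the style of Hyt\"onen--Kairema), applied to $\H^n$ equipped with the \emph{left-invariant} quasi-metric $d_L(x,y)=|x^{-1}y|$ coming from the Koranyi norm. The key point, already flagged in the discussion preceding the statement, is that $(\H^n,d_L,dx)$ is a space of homogeneous type: $d_L$ is a quasi-metric (the Koranyi norm satisfies a quasi-triangle inequality with a fixed constant), left translations are $d_L$-isometries, and Lebesgue measure is left-invariant and satisfies $|B(x,2r)|\simeq|B(x,r)|$ because $|B(x,r)|=c\,r^{2n+2}$. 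So all hypotheses needed to run the construction are in place; the content of the theorem is that the abstract construction specializes to these concrete inclusion radii ($\tfrac1{12}\delta^k$ and $4\delta^k$) and that $N$ many shifted grids suffice for part (2).

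\textbf{Step 1: one dyadic grid with geometric control.} Fix the dilation ratio $\delta\le 1/96$ in place of the usual $1/2$. Following Christ's construction (or Hyt\"onen--Kairema with base $\delta$), I would build, for each $k\in\Z$, a maximal $\delta^k$-separated set of points $\{z_\nu^{k}:\nu\in\mathscr A_k\}$, chosen compatibly across scales so that the level-$k$ centers form a subset of the level-$(k-1)$ centers' refinement, and then define the cubes $Q_\nu^k$ by the nearest-point (Voronoi-type) partition rule, assigning each point of $\H^n$ to the center that is ``closest'' in the grid order. The nesting property (ii) and the partition property (i) follow from the construction exactly as in the homogeneous-space setting. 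The two-sided ball containment in (iii), with the explicit constants $\tfrac1{12}$ and $4$, comes from tracking the quasi-triangle constant of the Koranyi norm through the geometric series $\sum_{j\ge 0}\delta^j=\frac1{1-\delta}$: the smallness $\delta\le 1/96$ is exactly what makes these tail sums small enough to pin the inner radius at $\tfrac1{12}\delta^k$ and the outer radius at $4\delta^k$. This is where the numerical hypothesis on $\delta$ is consumed, and is the step I would expect to be the most delicate bookkeeping — though not conceptually hard.

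\textbf{Step 2: finitely many adjacent grids.} To get part (2) — every ball $B(x,r)$ sits inside \emph{some} cube of comparable size from the union of the grids — I would produce $N$ translated copies $\mathcal D^\alpha$ of the grid from Step~1 (translates by a fixed finite set of ``shift parameters'' adapted to the base $\delta$, again as in Hyt\"onen--Kairema), with $N=N(n,\delta)$ depending only on the dimension and $\delta$. The adjacency property guarantees that for any $x$ and any scale $\delta^k$, one of the $\alpha$'s has a cube of side length $\delta^{k-1}$ whose inner ball of radius $\tfrac1{12}\delta^{k-1}$ already contains $B(x,r)$ once $\delta^{k+1}<r\le\delta^k$ — here one checks $r+|\text{center offset}|\le \tfrac1{12}\delta^{k-1}$ using $\delta\le1/96$ and $\tfrac1{12}\delta^{k-1}=\tfrac1{12}\delta^{-1}\delta^k\ge 8\delta^k\ge 8r$, with plenty of room to absorb the quasi-triangle constant. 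I would then simply state that $Q_B$ is the cube containing $B$ among these, with $\ell(Q_B)=\delta^{k-1}$.

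\textbf{What I would actually write.} Since none of this is new — it is the transplantation of the Hyt\"onen--Kairema / Christ construction to $(\H^n,d_L)$ — I would keep the proof short: verify that $(\H^n,d_L,dx)$ is a space of homogeneous type (quasi-triangle inequality for the Koranyi norm, left-invariance, $|B(x,r)|=cr^{2n+2}$), cite \cite{HK} (Hyt\"onen--Kairema) or \cite{Christ} for the existence of the grids and adjacent systems, and then devote a paragraph to checking that with base $\delta\le 1/96$ the constants in (iii) can be taken to be $\tfrac1{12}$ and $4$ and that the shift in (2) lands $B$ inside a cube of side $\delta^{k-1}$. The main obstacle, such as it is, is purely the constant-chasing in (iii): making the geometric-series tail estimates with the Koranyi quasi-triangle constant tight enough to reach the stated $\tfrac1{12}$ and $4$, which is precisely why the hypothesis $\delta\le 1/96$ (rather than an arbitrary $\delta<1$) appears here and, downstream, in the statement of Theorem~\ref{thm:mainSH}.
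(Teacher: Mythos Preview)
Your proposal is correct and takes essentially the same approach as the paper: the authors' proof is simply a citation to Hyt\"onen--Kairema \cite{HK} (their Theorems~2.2 and~4.1, Lemma~4.12, Remark~4.13), with the specific parameter choices $c_0=1/4$ and $C_0=2$ to secure property~(2). One minor remark: the Koranyi norm actually satisfies a genuine triangle inequality, so $d_L$ is a true metric rather than merely a quasi-metric, which only simplifies the constant-chasing you describe.
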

\begin{proof}
It follows from Theorem 4.1, the proof of Lemma 4.12, Remark 4.13 and Theorem~2.2 in \cite{HK}, where the choices $c_0=1/4$ and $C_0=2$ in \cite[Theorem 2.2]{HK} are made so that the property $(2)$ holds (see \cite[Lemma 4.10]{HK}). 
\end{proof}
We will first prove a lemma that is the analogue of \cite[Lemma 2.3]{Lacey}.
\begin{lem}
\label{lem:anal23}
Let $f_1$ and $f_2$ be supported on a cube $Q$ and let $\ell(Q)=r$. For $\big(\frac1p,\frac1q\big)$ in the interior of the triangle joining the points $(0,1), (1,0)$ and $ (\frac{3n+1}{3n+4},\frac{3n+1}{3n+4})$, there holds
$$
|\langle A_rf_1-A_r\tau_yf_1,f_2\rangle|\lesssim |y/r|^{\nu}|Q|\langle f_1\rangle_{Q,p}\langle f_2\rangle_{Q,q}, \quad |y|<r
$$
for some $\nu>0$.
\end{lem}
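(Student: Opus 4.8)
The plan is to reduce the bilinear estimate to the operator-norm bounds already established. First I would use the dilation identity from Lemma~\ref{lem:dilat}, namely $A_r = \delta_r^{-1}A_1\delta_r$, together with the identity $\delta_r(\tau_y f) = \tau_{\delta_r^{-1}y}(\delta_r f)$ used in Corollary~\ref{cor:dilat2}, to transfer the whole statement to the unit scale: writing $\tilde f = \delta_r f$, $\tilde g = \delta_r g$ (both now supported on a cube of side length $1$, up to the relevant normalisation) and $\tilde y = \delta_r^{-1}y$ with $|\tilde y| = |y/r| < 1$, the pairing $\langle A_r f - A_r\tau_y f, g\rangle$ becomes, after tracking the Jacobian factors $\|\delta_r h\|_p = r^{-(2n+2)/p}\|h\|_p$, a constant multiple of $\langle A_1\tilde f - A_1\tau_{\tilde y}\tilde f, \tilde g\rangle$ with the scaling that exactly matches the claimed power $|y/r|^\eta |Q|$ when one also uses $|Q| \sim r^{2n+2}$.

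Next I would estimate $\langle A_1 \tilde f - A_1\tau_{\tilde y}\tilde f, \tilde g\rangle$ by Hölder's inequality in the form $|\langle h, \tilde g\rangle| \le \|h\|_{q}\,\|\tilde g\|_{q'}$ where $h = A_1\tilde f - A_1\tau_{\tilde y}\tilde f$, and then invoke Corollary~\ref{cor:A1}: since $(\frac1p,\frac1q)$ — after the standard duality reflection $(\frac1p,\frac1q)\mapsto(1-\frac1q,1-\frac1p)$ that relates the triangle with vertices $(0,1),(1,0),(\frac{n}{n+1},\frac{n}{n+1})$ to the triangle with vertices $(0,0),(1,1),(\frac{3n+1}{3n+4},\frac{3}{3n+4})$ — lies in the region where Corollary~\ref{cor:A1} applies, we get $\|h\|_{L^{q}}\lesssim |\tilde y|^\eta \|\tilde f\|_{L^{p}}$. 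Here I should be a little careful: Corollary~\ref{cor:A1} is stated as an $L^p\to L^q$ bound, so I would apply it with the exponents arranged so that the output exponent pairs against $\tilde g$ via Hölder; concretely, one applies $\|A_1 - A_1\tau_{\tilde y}\|_{L^{p}\to L^{q}}$ and pairs with $\tilde g \in L^{q'}$. Since $\tilde f$, $\tilde g$ are supported on a cube of side $1$ (measure $\sim 1$), the averaged norms $\langle \tilde f\rangle_{Q,p}$, $\langle\tilde g\rangle_{Q,q}$ are comparable to $\|\tilde f\|_p$, $\|\tilde g\|_q$, and unwinding the dilation gives the averaged form on the original cube $Q$.

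The one genuine subtlety, and the step I would flag as the main obstacle, is matching the two parametrisations of the triangle: the excerpt states Lemma~\ref{lem:anal23} with the triangle $(0,1),(1,0),(\frac{n}{n+1},\frac{n}{n+1})$, which is the \emph{dual} range appearing in Theorem~\ref{thm:mainSH}, whereas Corollary~\ref{cor:A1} and Corollary~\ref{cor:dilat2} are phrased in the ``$L^p$-improving'' triangle $(0,0),(1,1),(\frac{3n+1}{3n+4},\frac{3}{3n+4})$. One must check that the bilinear duality $|\langle T\tilde f,\tilde g\rangle|\lesssim \|\tilde f\|_{p}\|\tilde g\|_{q}$ with $(\frac1p,\frac1q)$ in the former triangle is equivalent to $T: L^{p}\to L^{q'}$ with $(\frac1p, 1-\frac1q) = (\frac1p,\frac1{q'})$ in the latter — i.e.\ that the reflection $(\frac1p,\frac1q)\mapsto(\frac1p, 1-\frac1q)$ carries one triangle to the other. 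Checking the vertices: $(0,1)\mapsto(0,0)$, $(1,0)\mapsto(1,1)$, and $(\frac{n}{n+1},\frac{n}{n+1})\mapsto(\frac{n}{n+1},\frac{1}{n+1})$; note $\frac{n}{n+1} \le \frac{3n+1}{3n+4}$ and $\frac{1}{n+1}\le \frac{3}{3n+4}$, so the image triangle is contained in the region covered by Corollary~\ref{cor:A1}, and the inclusion is what we need. Once this bookkeeping is in place, the power of $|y/r|$ comes directly from the $|\tilde y|^\eta$ in Corollary~\ref{cor:A1} and the factor $|Q|$ from restoring the averages, completing the proof.
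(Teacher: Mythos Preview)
Your approach is essentially the paper's: H\"older plus the $L^p\to L^{q'}$ continuity estimate of Corollary~\ref{cor:dilat2} (equivalently, dilate first and apply Corollary~\ref{cor:A1}), then convert norms to averages using $|Q|\sim r^{2n+2}$. The paper applies Corollary~\ref{cor:dilat2} directly rather than reducing to unit scale first, but this is only a presentational difference.

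Two small corrections. First, your H\"older pairing should read $|\langle h,\tilde g\rangle|\le \|h\|_{q'}\|\tilde g\|_q$ so that the $g$-factor becomes $\langle g\rangle_{Q,q}$ as in the statement; you catch this yourself in the ``careful'' paragraph, where the reflection $(\frac1p,\frac1q)\mapsto(\frac1p,1-\frac1q)$ is indeed the right one. Second, your verification that $(\frac{n}{n+1},\frac{1}{n+1})$ lies in the improving triangle is faulty: the inequality $\frac{1}{n+1}\le\frac{3}{3n+4}$ is false (it reads $3n+4\le 3n+3$), and in any case coordinate-wise comparison of vertices does not establish containment in a triangle. The containment is nevertheless true: since $(0,0)$ and $(1,1)$ are shared, one only needs $(\frac{n}{n+1},\frac{1}{n+1})$ to lie inside, and this follows by checking it lies above the two lower edges of the larger triangle (e.g.\ the edge through $(1,1)$ and $(\frac{3n+1}{3n+4},\frac{3}{3n+4})$ has height $\frac{2}{3(n+1)}<\frac{1}{n+1}$ at $x=\frac{n}{n+1}$).
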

\begin{proof}
By H\"older's inequality and Corollary \ref{cor:dilat2} for the pair $\big(\frac1p,\frac1{q'}\big)$ we have, for $|y|<r$,
\begin{align*}
|\langle A_rf_1-A_r\tau_yf_1,f_2\rangle|&\le \|A_rf_1-A_r\tau_yf_1\|_{q'}\|f_2\|_{q}\\
&\le Cr^{(2n+2)(\frac{1}{q'}-\frac{1}{p})}r^{-\nu} |y|^{\nu}\|f_1\|_p\|f_2\|_{q}\\
&=Cr^{(2n+2)(\frac{1}{q'}-\frac{1}{p})}r^{-\nu} |y|^{\nu}|Q|^{\frac{1}{p}+\frac{1}{q}}\langle f_1\rangle_{Q,p}\langle f_2\rangle_{Q,q}\\
&\lesssim |Q|^{\frac{1}{q'}-\frac{1}{p}}|Q|^{\frac{1}{p}+\frac{1}{q}}r^{-\nu} |y|^{\nu}\langle f_1\rangle_{Q,p}\langle f_2\rangle_{Q,q}\\
&\lesssim |Q|r^{-\nu} |y|^{\nu}\langle f_1\rangle_{Q,p}\langle f_2\rangle_{Q,q},
\end{align*}
as $ |Q|$ is comparable to $ r^{2n+2}$.
\end{proof}

\begin{lem}
\label{lem:Hn}
Let $0<{\eta}< \frac{1}{96}$. For $Q$ with $\ell(Q)={\eta}^k$, $k\in \Z$, we consider
$$
\mathbb{V}_{Q}=\{P\in \mathcal{D}^1_{k+3}: B(z_{P},{\eta}^{k+1})\subseteq Q\}
$$
and define 
$$
A_{Q}f:=A_{{\eta}^{k+2}}(f{\bf{1}}_{V_Q})
$$ 
where $ V_{Q}=\cup_{P\in \mathbb{V}_{Q}}P.$ 
Then, for any $ f $ supported in $ Q $, the support  of $A_{Q}f $ is also contained in $ Q.$ Moreover,
$$A_{{\eta}^{k+2}}f\le \sum_{\alpha=1}^N\sum_{Q\in \mathcal{D}_k^{\alpha}}A_Q(f).$$
\end{lem}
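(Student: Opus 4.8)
The plan is to verify the two assertions separately: first the support statement, then the pointwise domination. For the support claim, recall from Theorem~\ref{thm:homogrid}(1)(iii) that a cube $P\in\mathcal{D}^1_{k+3}$ is contained in $B(z_P,4\delta^{k+3})$, and that $A_{\delta^{k+2}}g$ is supported in the $\delta^{k+2}$-neighbourhood (with respect to the left-invariant metric $d_L$) of $\supp g$, by the discussion preceding Theorem~\ref{thm:homogrid}. So if $f$ is supported in $Q$ then $A_Qf=A_{\delta^{k+2}}(f\mathbf{1}_{V_Q})$ is supported in the set $V_Q\cdot B(0,\delta^{k+2})$. Each $P\in\mathbb{V}_Q$ satisfies $B(z_P,\delta^{k+1})\subseteq Q$, hence $P\subseteq B(z_P,4\delta^{k+3})\subseteq B(z_P,\delta^{k+1})$ provided $4\delta^{k+3}\le\delta^{k+1}$, i.e. $\delta^2\le 1/4$, which holds since $\delta<1/96$. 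Therefore $V_Q\subseteq Q$ and, since $\delta^{k+2}$ is much smaller than the inradius of the cubes making up $V_Q$ relative to the ``slack'' between the inner and outer balls, adding a $B(0,\delta^{k+2})$ neighbourhood keeps us inside $Q$; concretely, $B(z_P,\delta^{k+1})\cdot B(0,\delta^{k+2})\subseteq B(z_P,\delta^{k+1}+\delta^{k+2})\subseteq Q$ fails in general, so instead one argues that $V_Q\cdot B(0,\delta^{k+2})$ is contained in the $\delta^{k+2}$-neighbourhood of $Q$, which is still inside a cube of side $\delta^{k-1}$ and — by the choice $\delta<1/96$ — inside $Q$ itself because the defining balls of $\mathbb{V}_Q$ were chosen with room to spare ($\delta^{k+1}$ rather than $4\delta^{k+3}$). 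This bookkeeping with the constants $1/12$, $4$ from Theorem~\ref{thm:homogrid}(1)(iii) is exactly why the hypothesis $\delta\le 1/96$ is imposed.

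For the domination $A_{\delta^{k+2}}f\le\sum_{\alpha=1}^N\sum_{Q\in\mathcal{D}^\alpha_k}A_Qf$, fix a point $x\in\H^n$. The value $A_{\delta^{k+2}}f(x)=f\ast\mu_{\delta^{k+2}}(x)$ only depends on the values of $f$ on the ``sphere'' of radius $\delta^{k+2}$ around $x$ in the appropriate sense, in particular on $f$ restricted to $B_L(x,\delta^{k+2})$ (the left ball, which is what matters for $f\ast\mu_r$). The key point is to produce, for each $x$, a single cube $Q\in\bigcup_\alpha\mathcal{D}^\alpha_k$ such that the relevant ball $B_L(x,\delta^{k+2})$ — or rather the portion of $\H^n$ that $\mu_{\delta^{k+2}}$ sees from $x$ — lies inside $V_Q$. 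By Theorem~\ref{thm:homogrid}(2) applied to the ball $B=B(x,\delta^{k+2})$ (whose radius $\delta^{k+2}$ satisfies $\delta^{k+3}<\delta^{k+2}\le\delta^{k+2}$, placing it at ``level'' $k+2$), there is a cube $Q_B\in\bigcup_\alpha\mathcal{D}^\alpha$ with $B\subseteq Q_B$ and $\ell(Q_B)=\delta^{k+1}$. That is one level too fine; iterating, or applying the statement to a slightly larger ball, we find $Q\in\bigcup_\alpha\mathcal{D}^\alpha_k$ of side $\delta^k$ containing $B(x,\delta^{k+2})$ together with enough of a neighbourhood that every child-cube $P\in\mathcal{D}^1_{k+3}$ meeting $\supp(f|_{B})$ satisfies $B(z_P,\delta^{k+1})\subseteq Q$, i.e. $P\in\mathbb{V}_Q$; hence $\supp(f)\cap(\text{relevant region})\subseteq V_Q$, and therefore $A_{\delta^{k+2}}f(x)=A_{\delta^{k+2}}(f\mathbf{1}_{V_Q})(x)=A_Qf(x)$. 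Since all summands $A_Qf$ are nonnegative (as $f\ge0$), the single term $A_Qf(x)$ is dominated by the full sum, giving the inequality at $x$.

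The main obstacle — and the only genuinely delicate part — is the geometric covering argument in the second half: arranging that for every $x$ there is a level-$k$ cube $Q$ whose associated collection $\mathbb{V}_Q$ captures all of $f$ that influences $A_{\delta^{k+2}}f(x)$, i.e. that $B(x,\delta^{k+2})$ is ``deep inside'' $Q$ in the quantitative sense needed for the defining condition $B(z_P,\delta^{k+1})\subseteq Q$ of $\mathbb{V}_Q$. This requires combining property~(2) of Theorem~\ref{thm:homogrid} (which gives containment of a ball in \emph{some} dyadic cube of controlled side, across the finitely many grids $\mathcal{D}^\alpha$) with the nesting property~(1)(ii), and then chasing the constants $1/12$ and $4$ through two levels of the dyadic hierarchy; this is precisely where $\delta\le 1/96$ is used and where a cruder choice of $\delta$ would force a more careful (but still elementary) argument, as the remark following the lemma in the paper indicates. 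The translation between the left-invariant metric governing $f\ast\mu_r$ and the metric underlying the dyadic grids of Theorem~\ref{thm:homogrid} must also be handled, but this was already set up in the paragraph preceding that theorem, so it is routine here.
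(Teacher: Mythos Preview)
Your support argument contains the right ingredients but is left muddled precisely at the key step. You correctly observe that enlarging $B(z_P,\delta^{k+1})$ by $\delta^{k+2}$ need not stay in $Q$, and you then retreat to vague phrases (``room to spare'', ``still inside a cube of side $\delta^{k-1}$'') rather than fixing the estimate. The fix is simple and you have already written down the relevant fact: use the \emph{small} outer ball of $P$, not the large defining ball of $\mathbb V_Q$. Since $P\subseteq B(z_P,4\delta^{k+3})=z_P\cdot B(0,4\delta^{k+3})$, one has
\[
P\cdot B(0,\delta^{k+2})\subseteq z_P\cdot B(0,4\delta^{k+3})\cdot B(0,\delta^{k+2})\subseteq B(z_P,4\delta^{k+3}+\delta^{k+2})\subseteq B(z_P,\delta^{k+1})\subseteq Q,
\]
the last two inclusions holding because $4\delta^{k+3}+\delta^{k+2}=(4\delta+1)\delta^{k+2}<2\delta^{k+2}\le\delta^{k+1}$ for $\delta<1/4$. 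This is exactly the paper's argument and is where the ``room to spare'' lives; you should write this line rather than gesture at it.

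For the domination inequality your approach is considerably more complicated than needed, and the details you give do not close. You try to show that for each $x$ there is a single $Q$ with $A_{\delta^{k+2}}f(x)=A_Qf(x)$, which forces you to put the whole sphere $x\cdot S_{\delta^{k+2}}$ inside one $V_Q$; to do this you must apply Theorem~\ref{thm:homogrid}(2) to a ball of radius comparable to $\delta^{k+1}$ (not $\delta^{k+2}$), and then still check that every $\mathcal D^1_{k+3}$-cube meeting that ball has its $\delta^{k+1}$-ball inside $Q$. None of this bookkeeping is carried out. The paper instead uses a much softer observation: it suffices that the sets $V_Q$ \emph{cover} $\H^n$. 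For any $x$, the unique $P\in\mathcal D^1_{k+3}$ containing $x$ satisfies $P\subseteq B(z_P,4\delta^{k+3})$; applying Theorem~\ref{thm:homogrid}(2) to the ball $B(z_P,\delta^{k+1})$ produces some $Q\in\bigcup_\alpha\mathcal D^\alpha_k$ with $B(z_P,\delta^{k+1})\subseteq Q$, so $P\in\mathbb V_Q$ and $x\in V_Q$. Hence $f\le\sum_{\alpha}\sum_{Q\in\mathcal D^\alpha_k} f\mathbf 1_{V_Q}$ pointwise (for $f\ge0$), and applying the positive operator $A_{\delta^{k+2}}$ gives the inequality immediately. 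No control on where $x$ sits inside $Q$ is needed.
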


\begin{proof}
Observe that for any $ x \in \H^n$  there exists $P\in \mathcal{D}^1_{k+3}$ such that $ x \in P \subseteq B(z_P, 4{\eta}^{k+3}).$  Then $ P \subseteq B(z_P,{\eta}^{k+1})\subseteq Q$ for some $Q$ in $\mathcal{D}_k^{\alpha}$, for some $\alpha$. Therefore $P\in V_{Q}$ and hence $ x \in V_{Q}$. This proves that  $ \H^n=\bigcup_{\alpha=1}^N\bigcup_{Q\in \mathcal{D}_k^{\alpha}}\mathbb{V}_Q $, hence we have $f\le \sum_{\alpha=1}^N\sum_{Q\in \mathcal{D}_k^{\alpha}}f{\bf{1}}_{V_Q}$, and consequently $A_{{\eta}^{k+2}}f\le \sum_{\alpha=1}^N\sum_{Q\in \mathcal{D}_k^{\alpha}}A_Qf$. It remains to be proved that $ A_Qf $ is supported in $ Q.$
Now assume that $\supp f\subseteq Q$ and recall  $A_{{\eta}^{k+2}}f(x)=f\ast \mu_{{\eta}^{k+2}}(x)$. Then it is enough to show that  $\supp A_{{\eta}^{k+2}}(f{\bf{1}}_P) \subseteq B(z_P, {\eta}^{k+1})$ for every $P \in \mathbb{V}_Q$. Indeed, 
$$\supp (f {\bf{1}}_P) \ast \mu_{{\eta}^{k+2}}\subseteq (\supp (f {\bf{1}}_P))\cdot(\supp \mu_{{\eta}^{k+2}})\subseteq z_P\cdot B(0,{\eta}^{k+2})\cdot B(0, \delta^{k+2})$$
which is contained in $  B(z_P, {\eta}^{k+1}) \subseteq Q $ by the definition of $ V_Q$. Observe that the above argument fails if we use balls defined by the standard right invariant metric. The lemma is proved.
\end{proof}

\begin{rem}
Actually we can take any ${\eta}>0$ when considering $A_{{\eta}^{k+2}}(f{\bf{1}}_{V_Q})$ in Lemma \ref{lem:Hn} (and in Theorems \ref{thm:spherical} and \ref{thm:mainSH}), in particular we could consider the means $A_{2^{-m}}(f{\bf{1}}_{V_Q})$, $m\in \Z$, or even more general $A_{\delta^{m}}(f{\bf{1}}_{V_Q})$, for any $\delta>0$. In that case we have to do some modifications in defining $A_Q f$, where one has to use the fact that if ${\eta}<\frac{1}{96}$ then the number of points of the form $\delta^m$, $m\in \Z$, lying between ${\eta}^j$ and ${\eta}^{j+1}$, $j\in \Z$, does not depend on $j$. 

Indeed, let $0<{\eta}< \frac{1}{96}$, that is fixed due to the fact that we are dealing with a space of homogeneous type. For $Q$ with $\ell(Q)={\eta}^k$ we consider
$$
\mathbb{V}_{Q}=\{P\in \mathcal{D}^1_{k+3}: B(z_{P},{\eta}^{k+1})\subseteq Q\}.
$$
Let, for any $\zeta>0$,
$$
\Theta_k^{\zeta}=\{m\in \Z: {\eta}^{k+3}<\zeta^m\le {\eta}^{k+2} \quad \text{ or } \quad    {\eta}^{k+2}<\zeta^m\le {\eta}^{k+3}\}
$$
and define 
$$
A_{Q}f:=\sum_{m\in \Theta_k^{\zeta}}A_{\zeta^{m}}(f{\bf{1}}_{V_Q})
$$ 
where $ V_{Q}=\cup_{P\in \mathbb{V}_{Q}}P.$ 
Suppose $ f $  is supported in $ Q $. Since the support of each $A_{\zeta^{m}}(f{\bf{1}}_{V_Q})$ is contained in $Q$, then the support  of $A_{Q}f $ is also contained in $ Q$. Moreover,
$$
\sup_{m\in \Theta_k^{\zeta}}A_{\zeta^{m}}f\le \sum_{\alpha=1}^N\sum_{Q\in \mathcal{D}_k^{\alpha}}A_Q(f).
$$
Now, as the number of terms in $\Theta_k^{\zeta}$ does not depend on $k$, $A_Qf$ satisfies Lemma \ref{lem:anal23} with constant independent of $k$. 

Observe also that in particular, we can choose $\zeta=1/2$ in the reasoning above, which is the standard lacunary case. 
In order to avoid additional notation, we just chose $\zeta={\eta}$ in Lemma \ref{lem:Hn}. Nevertheless, for the main results, we will keep the standard lacunary notation.
\end{rem}

In view of Lemma \ref{lem:Hn} it suffices to prove the sparse bound for each $M_{\mathcal{D}^{\alpha}}f=\sup_{Q\in \mathcal{D}^{\alpha}}A_Qf$ for $\alpha=1,2,\ldots,N$. To see this, recall that, by Theorem \ref{thm:homogrid}, for each fixed $ \alpha $ and $ k $ we have $\H^n=\cup_{Q\in \mathcal{D}_k^{\alpha}}Q$ (disjoint union). Since the  support of $ A_Qf $ is contained in $ Q $ it follows that 
$$ \sum_{Q\in \mathcal{D}_k^{\alpha}}A_Q(f) \leq M_{\mathcal{D}^{\alpha}}f .$$
Let us fix then $\mathcal{D}=\mathcal{D}^{\alpha}$. We will linearise the supremum. Let us assume that $f$ is supported in a cube $Q_0\in \mathcal{D}$, and let $\mathcal{D}(Q_0)$ be the collection of all dyadic subcubes of $Q_0$. We define 
$$
E_Q:=\big\{x\in Q : A_Qf(x)\ge \frac{1}{2} \sup_{P\in \mathcal{D}(Q_0)}A_Pf(x)\big\}
$$ 
for $Q\in\mathcal{D}(Q_0)$. Note that for any $ x \in \H^n $ there exists  a $Q \in \mathcal{Q}$ such that  
$$ 
A_Qf(x)\ge \frac{1}{2} \sup_{P\in \mathcal{D}(Q_0)} A_Pf(x) 
$$  
and hence $ x \in E_Q.$  If we define $B_Q=E_Q\setminus \cup_{Q'\supseteq Q}E_{Q'}$, then $\{B_Q: Q\in\mathcal{D}(Q_0)\}$ are disjoint and also, $\cup_{Q\in\mathcal{D}(Q_0)}B_Q=\cup_{Q\in\mathcal{D}(Q_0)}E_Q$. For $f_1,f_2>0$ it then follows that
\begin{align*}
\notag\langle\sup_{P\in\mathcal{D}(Q_0)}A_Pf_1,f_2\rangle&=\sum_{Q\in \mathcal{Q}} \int_{E_Q} \sup_{P\in\mathcal{D}(Q_0)}A_Pf_1(x)f_2(x)\,dx\\
\notag&\le 2 \sum_{Q\in\mathcal{D}(Q_0)}\int_{B_Q}A_Qf_1(x)f_2(x)\,dx\\
\notag&\le 2 \sum_{Q\in\mathcal{D}(Q_0)}\int_{\H^n}A_Qf_1(x)f_2(x)\mathbf{1}_{B_Q}(x)\,dx\\
&\le 2 \sum_{Q\in\mathcal{D}(Q_0)}\langle A_Qf_1,f_2\mathbf{1}_{B_Q}\rangle.
\end{align*}
Defining  $(f_2)_Q:=f_2\mathbf{1}_{B_Q}$ we will deal with $\sum_{Q\in\mathcal{D}(Q_0)}\langle A_Qf_1,(f_2)_Q\rangle$.

\begin{lem}
\label{lem:key}
Let $1<p,q<\infty$ be such that $\big(\frac1p,\frac1q\big)$ in the interior of the triangle joining the points $(0,1), (1,0)$ and $ (\frac{3n+1}{3n+4},\frac{3n+1}{3n+4})$. Let $f_1=\mathbf{1}_{F}$ and let $ f_2 $ be any bounded function supported in $ Q_0$. Let $C_0>1$ be a constant and let $\mathcal{Q}$ be a collection of dyadic subcubes of $Q_0\in \mathcal{D}$ for which the following holds
\begin{equation}
\label{eq:keyCondf1}
\sup_{Q'\in \mathcal{Q}}\sup_{Q: Q'\subset Q\subset Q_0}\frac{\langle f_1\rangle_{Q,p}}{\langle f_1\rangle_{Q_0,p}}<C_0.
\end{equation}
Then there holds 
$$
\sum_{Q\in \mathcal{Q}}\langle A_Qf_1,(f_2)_Q\rangle\lesssim \langle f_1\rangle_{Q_0,p}\langle f_2\rangle_{Q_0,q}|Q_0|.
$$
\end{lem}

\begin{proof}
We perform a Calder\'on--Zygmund decomposition of $f_1$ at height $2C_0\langle f_1\rangle_{Q_0,p}$. Let us denote by  $\mathcal{B}$  the resulting collection of (maximal) dyadic subcubes of $Q_0$  so that
\begin{equation}
\label{eq:stoppl}
\langle f_1\rangle_{Q,p}>2C_0\langle f_1 \rangle_{Q_0,p}.
\end{equation}
 Set $f_1=g_1+b_1$, where $\|g_1\|_{L^{\infty}}\lesssim \langle f_1\rangle_{Q_0,p}$ and
\begin{equation}
\label{eq:decoCZ}
b_1=\sum_{P\in \mathcal{B}}(f_1-\langle f_1\rangle_{P})\mathbf{1}_P=\sum_{j=s_0+1}^\infty \sum_{P\in \mathcal{B}(j)}(f_1-\langle f_1\rangle_{P})\mathbf{1}_P=:\sum_{j=s_0+1}^{\infty}B_{1,j},
\end{equation} 
where $\ell(Q_0)=\eta^{s_0}$ and $\mathcal{B}(j)=\{P\in \mathcal{B}: \ell(P)=\eta^{j}\}$. Now 
$$
\big|\sum_{Q\in \mathcal{Q}}\langle A_Qf_1,(f_2)_Q\rangle\big|\le \sum_{Q\in \mathcal{Q}}|\langle A_Qg_1,(f_2)_Q\rangle|+\sum_{Q\in \mathcal{Q}}|\langle A_Qb_1,(f_2)_Q\rangle|.
$$
Hence
$$
\sum_{Q\in \mathcal{Q}}|\langle A_Qg_1,(f_2)_Q\rangle|\lesssim \sum_{Q\in \mathcal{Q}}\|A_Qg_1\|_{\infty}\|f_2\mathbf{1}_{B_Q}\|_1\lesssim \langle f_1\rangle_{Q_0,p}\langle f_2\rangle_{Q_0,1}|Q_0|\lesssim \langle f_1\rangle_{Q_0,p}\langle f_2\rangle_{Q_0,q}|Q_0|.
$$
We now make the following useful observation. For all $Q\in \mathcal{Q}$ and $P\in \mathcal{B}$, if $P\cap Q\neq \emptyset$ then $P$ is properly contained in $ Q$. For otherwise, $Q\subseteq P$ and by the assumption on $ \mathcal{Q}$, we get $\langle f_1\rangle_{P,p}<C_0\langle f_1\rangle_{Q_0,p}$. But this contradicts the Calder\'on--Zygmund decomposition since  $ \langle f_1\rangle_{P,p}>2C_0\langle f_1 \rangle_{Q_0,p}$. Therefore, for any $Q\in \mathcal{Q}$ with $\ell(Q)=\eta^{s}$ we have
$$
\langle A_Qb_1,(f_2)_Q\rangle=\sum_{j>s}\langle A_QB_{1,j},(f_2)_Q\rangle=\sum_{j=1}^{\infty}\langle A_QB_{1,s+j},(f_2)_Q\rangle
$$
and so 
\begin{equation*}
\big|\sum_{Q\in \mathcal{Q}}\langle A_Qb_1,(f_2)_Q\rangle\big|\le \sum_{j=1}^{\infty}\sum_{Q\in \mathcal{Q}}|\langle A_QB_{1,s+j},(f_2)_Q\rangle|.
\end{equation*}
By making use of  the mean zero property of $b_1$, we see that
\begin{align*}
&|\langle A_QB_{1,s+j},(f_2)_Q\rangle|=|\langle B_{1,s+j},A_Q^*(f_2)_Q\rangle|\\
&= \sum_{P\in \mathcal{B}(s+j)}\big|\int_P A_Q^*(f_2)_Q(x)B_{1,s+j}(x)\,dx\big|\\
&\le  \sum_{P\in \mathcal{B}(s+j)}\frac{1}{|P|}\Big|\int_P\int_P\big[A_Q^*(f_2)_Q(x)-A_Q^*(f_2)_Q(x')\big]B_{1,s+j}(x)\,dx\,dx'\Big|.
\end{align*}
In the integral with respect to $ x'$ we make the change of variables $x'=xy^{-1}$ and note that $ P^{-1}x \subset P^{-1}P$ (here we have used the standard notation: for subsets $ A, B \subset \H^n$ we have $ AB = \{ a b : a \in A, b \in B \} $ and also $ A^{-1} = \{ a^{-1}: a \in A \}$). Since $ P \subset B(z_P, 4\eta^{s+j}) = z_P\cdot B(0,4\eta^{s+j})$ it follows that $ P^{-1} \subset  B(0, 4\eta^{s+j})z_P^{-1} $ and hence $ P^{-1}P  \subset P_0 = B(0, 8\eta^{s+j}) \subset B(0,\eta^{s+j-1})$ (observe that for the above argument it is important that the balls are defined using the left invariant metric). Thus we have
\begin{align*}
&|\langle A_QB_{1,s+j},(f_2)_Q\rangle|\\
&\le \sum_{P\in \mathcal{B}(s+j)}\frac{1}{|P|}\Big|\int_{P^{-1}P}\int_P\big[A_Q^*(f_2)_Q(x)-\tau_yA_Q^*(f_2)_Q(x)\big]B_{1,s+j}(x)\,dx\,dy\Big|\\
&\lesssim \frac{1}{|P_0|}\int_{P_0}\Big|\int_Q(f_2)_Q(x)(A_Q-A_Q\tau_{y^{-1}})B_{1,s+j}(x)\,dx\Big|\,dy\\
&\lesssim  \frac{1}{|P_0|}\int_{P_0}\Big|\frac{y}{\ell(Q)}\Big|^{\nu}|Q|\langle B_{1,s+j}\mathbf{1}_Q\rangle_{Q,p}\langle (f_2)_Q\rangle_{Q,q}\,dy\\
&\lesssim \frac{\eta^{(s+j-1)\nu}}{\eta^{s\nu}}|Q|\langle B_{1,s+j}\mathbf{1}_Q\rangle_{Q,p}\langle (f_2)_Q\rangle_{Q,q}\\
&=\eta^{(j-1)\nu}|Q|\langle B_{1,s+k}\mathbf{1}_Q\rangle_{Q,p}\langle (f_2)_Q\rangle_{Q,q},
\end{align*}
where we used Lemma \ref{lem:anal23} in the third inequality. 

Now we will prove 
\begin{equation}
\label{eq:claimII}
\sum_{Q\in \mathcal{Q}}|Q|\langle B_{1,s+j}\mathbf{1}_Q\rangle_{Q,p}\langle f_2\mathbf{1}_{B_Q}\rangle_{Q,q}\lesssim |Q_0|\langle f_1\rangle_{Q_0,p}\langle f_2\rangle_{Q_0,q},
\end{equation}
for all $j\ge1$ and for all $1<p,q<\infty$ such that $\big(\frac1p,\frac1q\big)$ are in the interior of the triangle joining the points $(0,1), (1,0)$ and $(1,1)$ (including the segment joining $(0,1)$ and $(1,0)$, excluding the endpoints). 

Let us fix the integer $j$. From the definition and \eqref{eq:keyCondf1} it follows that we can dominate
$$
|B_{1,s+j}|\lesssim \langle f_1\rangle_{Q_0,p}\mathbf{1}_{E_s}+\mathbf{1}_{F_{s}},
$$
where $E_s=E_{s,j}$ are pairwise disjoint sets in $Q_0$ as $s$ varies, and $F_{s}=F_{s,j}$ are pairwise disjoint sets in $F$. This produces two terms to control. For the first one, we will show that
\begin{equation}
\label{eq:first}
\langle f_1\rangle_{Q_0,p}\sum_{Q\in \mathcal{Q}}|Q|\langle \mathbf{1}_{E_s}\rangle_{Q,p}\langle f_2\mathbf{1}_{B_Q}\rangle_{Q,q}\lesssim |Q_0|\langle f_1\rangle_{Q_0,p}\langle f_2\rangle_{Q_0,q}.
\end{equation}

First we consider the case when $1/p+1/q=1,$  i.e. $ p =q'$, for $1<p<\infty$. 
\begin{align*}
&\sum_{Q\in \mathcal{Q}}|Q|\langle \mathbf{1}_{E_s}\rangle_{Q,p}\langle f_2\mathbf{1}_{B_Q}\rangle_{Q,p'}=\sum_{Q\in \mathcal{Q}}\Big(\int_Q\mathbf{1}_{E_s} \,dx\Big)^{1/p}\Big(\int_{Q}|f_2(x)|^{p'}\mathbf{1}_{B_Q}\,dx\Big)^{1/p'}\\
& \le \Big(\sum_{Q\in \mathcal{Q}}\int_Q\mathbf{1}_{E_s} \,dx\Big)^{1/p}\Big(\sum_{Q\in \mathcal{Q}}\int_{Q}|f_2(x)|^{p'}\mathbf{1}_{B_Q}\,dx\Big)^{1/p'}.
\end{align*}
On the one hand, from the disjointness of $B_Q$,
\begin{align*}
\sum_{Q\in \mathcal{Q}}\int_{Q}|f_2(x)|^{p'}\mathbf{1}_{B_Q}\,dx  = \int_{\cup B_Q}|f_2(x)|^{p'}\,dx&\leq \Big( \frac{1}{|Q_0|}\int_{Q_0}|f_2(x)|^{p'}\,dx \Big) |Q_0| \\
&= |Q_0|\langle f_2\rangle_{Q_0,p'}^{p'}.
\end{align*}
On the other hand,
as $E_s \cap Q $ are disjoint subsets of $ Q_0 $ we finally obtain 
$$
\sum_{Q\in \mathcal{Q}}\int_Q\mathbf{1}_{E_s} \,dx= \sum_{Q\in \mathcal{Q}}|E_s \cap Q|  \leq  |Q_0|.
$$
Thus the required inequality \eqref{eq:claimII} is proved for the first term in the case $ 1/p +1/q =1$. In the case $1/p +1/q =1+\tau>1$, set $1/\widetilde{p}=1/p-\tau$. Then, $1/\widetilde{p}+1/q=1$, and $p<\widetilde{p}$, so that
$$
\langle \mathbf{1}_{E_s}\rangle_{Q,p}\langle f_2\mathbf{1}_{B_Q}\rangle_{Q,q}\lesssim \langle \mathbf{1}_{E_s}\rangle_{Q,\widetilde{p}}\langle f_2\mathbf{1}_{B_Q}\rangle_{Q,q}.
$$
Then, \eqref{eq:first} follows from the previous case since $1/\widetilde{p}+1/q'=1$. 

Concerning the second term, we will show that
\begin{equation}
\label{eq:second}
\sum_{Q\in \mathcal{Q}}|Q|\langle \mathbf{1}_{F_{1,s}}\rangle_{Q,p}\langle f_2\mathbf{1}_{B_Q}\rangle_{Q,q}\lesssim |Q_0|\langle f_1\rangle_{Q_0,p}\langle f_2\rangle_{Q_0,q}.
\end{equation}
Again, the inequality holds in the case of $1/p+1/q=1$. For $1/p+1/q=1+\tau>1$, we define $\widetilde{p}$ as above. By using the stopping condition \eqref{eq:stoppl} we have then 
$$
\langle \mathbf{1}_{F_{1,s}}\rangle_{Q,p}\langle f_2\mathbf{1}_{B_Q}\rangle_{Q,q}\lesssim\langle \mathbf{1}_{F_{1}}\rangle_{Q_0}^{\tau}\langle \mathbf{1}_{F_{1,s}}\rangle_{Q,\widetilde{p}}\langle f_2\mathbf{1}_{B_Q}\rangle_{Q,q}. 
$$
From this and by using the previous case, since $1/\widetilde{p}+1/q=1$, we can conclude \eqref{eq:second}, and therefore \eqref{eq:claimII}. The proof is complete.
\end{proof}

Let us proceed to prove Theorem \ref{thm:mainSH}. We will state it also here, for the sake of the reading.

\begin{thm} 
\label{thm:main}
Assume $ n \geq 2$. Let $ 1 < p, q < \infty $ be such that $ (\frac{1}{p},\frac{1}{q}) $ belongs to the interior of the triangle joining the points $ (0,1), (1,0) $ and $ (\frac{3n+1}{3n+4},\frac{3n+1}{3n+4})$. Then for any  pair of compactly supported bounded functions $ (f_1,f_2) $ there exists a $ (p,q)$-sparse form such that $ \langle M_{\operatorname{lac}}f_1,f_2\rangle \leq C \Lambda_{\mathcal{S},p,q}(f_1,f_2)$.
\end{thm}

\begin{proof}
Fix a dyadic grid $\mathcal{D}$ and consider the maximal function 
$$
M_{\mathcal{D}}f_1(x)=\sup_{Q\in \mathcal{D}}|A_{Q}f_1(x)|.
$$
We can assume that $f_1$ is supported in $Q_0$ so that $A_Qf_1=0$ for all large enough cubes. According to this, we will therefore prove the sparse bound for the maximal function
$$
M_{\mathcal{D}\cap Q_0}f_1(x)=\sup_{Q\in \mathcal{D}}|A_{Q}f_1(x)|.
$$
From this, it follows that $M_{\operatorname{lac}}$ is bounded by the sum of a finite number of sparse forms. By the definition of supremum, given $f_1,f_2$, there is a sparse family of dyadic cubes $\mathcal{S}_0$ so that $\sup_{\mathcal{S}}\Lambda_{\mathcal{S},p,q}(f_1,f_2)\le 2 \Lambda_{\mathcal{S}_0,p,q}(f_1,f_2)$. Therefore, the claimed sparse bound holds.

As explained above, by linearising the supremum it is enough to prove the sparse bound for the sum
\begin{equation}
\label{eq:linear}
\sum_{Q\in\mathcal{D}\cap Q_0}\langle A_Qf_1,f_2\mathbf{1}_{B_Q}\rangle
\end{equation}
for the collection of pairwise disjoint $B_{Q}\subset Q$ described just before Lemma \ref{lem:key}.

Given $1<p,q<\infty$ so that Corollaries \ref{cor:LpLq} and \ref{cor:dilat2} hold for $\big(\frac1p,\frac{1}{q'}\big)$, we have to produce a sparse family $\mathcal{S}$ of subcubes of $Q_0$ such that
$$
\langle M_{\mathcal{D}\cap Q_0}f_1,f_2\rangle \le  2 \sum_{Q\in\mathcal{D}\cap Q_0}\langle A_Qf_1,f_2\mathbf{1}_{B_Q}\rangle \le C\sum_{S\in \mathcal{S}}|S|\langle f_1\rangle_{S,p}\langle f_2 \rangle_{S,q}
$$
where for each $S\in \mathcal{S}$, there exists $F_S\subset S$ with $|F_S|\ge \frac12 |S|$.

We first prove \eqref{eq:linear} when $f_1$ is the characteristic function of a set $F\subset Q_0$. Consider the collection $\mathcal{E}_{Q_0}$ of maximal children $P\subset Q_0$ for which
$$
\langle f_1\rangle_{P,p}>2\langle f_1\rangle_{Q_0,p}.
$$
 Let $E_{Q_0}=\cup_{P\in \mathcal{E}_{Q_0}}$. For a suitable choice of $c_n>1$ we can arrange $|E_{Q_0}|<\frac12|Q_0|$. We let $F_{Q_0}=Q_0\setminus E_{Q_0}$ so that $|F_{Q_0}|\ge \frac12|Q_0|$. We define
\begin{equation}
\label{eq:zero}
\mathcal{Q}_0=\{Q\in \mathcal{D}\cap Q_0: Q\cap E_{Q_0}=\emptyset\}.
\end{equation}
Note that when $Q\in \mathcal{Q}_0$ then $\langle f_1\rangle_{Q,p}\le 2\langle f_1\rangle_{Q_0,p}$. For otherwise, if $\langle f_1\rangle_{Q,p}>2\langle f_1\rangle_{Q_0,p}$ then there exists $P\in \mathcal{E}_{Q_0}$ such that $P\supset Q$, which is a contradiction. For the same reason, if $Q'\in \mathcal{Q}_0$ and $Q'\subset Q\subset Q_0$ then $\langle f_1\rangle_{Q,p}\le 2\langle f_1\rangle_{Q_0,p}$. Thus 
$$
\sup_{Q'\in \mathcal{Q}_0}\sup_{Q:Q'\subset Q\subset Q_0}\langle f_1\rangle_{Q,p}\le 2\langle f_1\rangle_{Q_0,p}.
$$
Note that for any $Q\in \mathcal{D}\cap Q_0$, either $Q\in \mathcal{Q}_0$ or $Q\subset P$ for some $P\in \mathcal{E}_{Q_0}$. Thus 
$$
\sum_{Q\in \mathcal{D}\cap Q_0}\langle A_Q f_1,f_2\mathbf{1}_{B_Q}\rangle=\sum_{Q\in Q_0}\langle A_Q  f_1,f_2\mathbf{1}_{B_Q}\rangle+\sum_{P\in \mathcal{E}_{Q_0}}\sum_{Q\subset P}\langle A_Q  f_1,f_2\mathbf{1}_{B_Q}\rangle
$$
for any $Q\in \mathcal{Q}_0$, $Q\subset F_{Q_0}$ and hence
$$
\sum_{Q\in \mathcal{Q}_0}\langle A_Q  f_1,f_2\mathbf{1}_{B_Q}\rangle=\sum_{Q\in \mathcal{Q}_0}\langle A_Q  f_1,f_2\mathbf{1}_{F_{Q_0}}\mathbf{1}_{B_Q}\rangle.
$$
Applying Lemma \ref{lem:key} we obtain
$$
\sum_{Q\in \mathcal{Q}_0}\langle A_Q  f_1,f_2\mathbf{1}_{B_Q}\rangle\le C|Q_0|\langle f_1\rangle_{Q_0,p} \langle f_2\mathbf{1}_{F_{Q_0}}\rangle_{Q_0,q}.
$$

Let $\{P_j\}$ be an enumeration of the cubes in $\mathcal{E}_{Q_0}$. Then the second sum above is given by 
$$
\sum_{j=1}^{\infty}\sum_{Q\in P_j\cap \mathcal{D}}\langle A_Q  f_1,f_2\mathbf{1}_{B_Q}\rangle.
$$
For each $j$ we can repeat the above argument recursively. Putting everything together we get a sparse collection $\mathcal{S}$ for which
\begin{equation}
\label{eq:toget}
\sum_{Q\in  \mathcal{D}\cap Q_0}\langle A_Q  f_1,f_2\mathbf{1}_{B_Q}\rangle\le C\sum_{S\in \mathcal{S}}|S| |\langle f_1\rangle_{S,p} \langle f_2\mathbf{1}_{F_{S}}\rangle_{S,q}.
\end{equation}
This proves the result when $f_1=\mathbf{1}_F$. We pause for a moment to remark that we have actually proved a sparse domination stronger than the one stated in the theorem. However, we are not able to prove such a result for general $ f_1$.

Now we prove the theorem for any bounded $f_1\ge0$ supported in $Q_0$. We start as in the case of $f_1=\mathbf{1}_F$ but now we define $\mathcal{Q}_0$ using stopping conditions on both $f_1$ and $f_2$. Thus we let $\mathcal{E}_{Q_0}$ stand for the collection of maximal subcubes $P$ of $Q_0$ for which either $\langle f_1\rangle_{P,p}>2\langle f_1\rangle_{Q_0,p}$ or $\langle f_2\rangle_{P,q}>2\langle f_2\rangle_{Q_0,q}$. As before, we define $E_{Q_0}=\cup_{P\in \mathcal{E}_{Q_0}}$ and $F_{Q_0}=Q_0\setminus E_{Q_0}$ so that $|F_{Q_0}|\ge \frac12|Q_0|$. We let 
$$
\mathcal{Q}_0=\{Q\in \mathcal{D}\cap Q_0: Q\cap E_{Q_0}=\emptyset\}.
$$
Then it follows that
$$
\sup_{Q'\in \mathcal{Q}_0}\sup_{Q:Q'\subset Q\subset Q_0}\langle f_1\rangle_{Q,p}\le 2\langle f_1\rangle_{Q_0,p}
$$
and
$$
\sup_{Q'\in \mathcal{Q}_0}\sup_{Q:Q'\subset Q\subset Q_0}\langle f_2\rangle_{Q,q}\le 2\langle f_2\rangle_{Q_0,q}.
$$
If we can show that 
\begin{equation}
\label{eq:star}
\sum_{Q\in \mathcal{Q}_0}\langle A_Q f_1,f_2\mathbf{1}_{B_Q}\rangle\le C|Q_0|\langle f_1\rangle_{Q_0,\rho} \langle f_2\rangle_{Q_0,q}
\end{equation}
for some $\rho>p$, then we can proceed as in the case of $f_1=\mathbf{1}_F$ to get the sparse domination
$$
\langle M_{\mathcal{D}}f_1\rangle\le  C\sum_{S\in \mathcal{S}}|S| |\langle f_1\rangle_{S,\rho} \langle f_2\rangle_{S,q}.
$$

In order to prove $\eqref{eq:star}$ we make use of the sparse domination already proved for $f_1=\mathbf{1}_F$. Defining $E_m:=\{x\in Q_0: 2^m\le f_1(x)\le 2^{m+1}\}$ and $f_{1,m}:=f_1\mathbf{1}_{E_m}$, we have the decomposition $f_1=\sum_mf_{1,m}$ (since $f_1$ is bounded it follows that $E_m=\emptyset$ for all $m\ge m_0$ for some $m_0\in \Z$). By applying the sparse domination \eqref{eq:toget} to $\mathbf{1}_{E_m}$ we obtain the following:
\begin{align*}
\sum_{Q\in \mathcal{Q}_0}\langle A_Qf_{1,m},f_2\mathbf{1}_{B_Q}\rangle&\le 2^{m+1}\sum_{Q\in \mathcal{Q}_0}\langle A_Q\mathbf{1}_{E_m},f_2\mathbf{1}_{B_Q}\rangle\\
&=2^{m+1}\sum_{Q\in \mathcal{Q}_0}\langle A_Q\mathbf{1}_{E_m},f_2\mathbf{1}_{F_{Q_0}}\mathbf{1}_{B_Q}\rangle\\
&\le2^{m+1}\sum_{Q\in Q_0\cap \mathcal{D}}\langle A_Q\mathbf{1}_{E_m},f_2\mathbf{1}_{F_{Q_0}}\mathbf{1}_{B_Q}\rangle\\
&\le C2^{m+1}\sum_{S\in \mathcal{S}_m}|S|\langle \mathbf{1}_{E_m}\rangle_{S,p}\langle f_2\mathbf{1}_{F_{Q_0}}\rangle_{S,q},
\end{align*}
where in the last three lines we used that for any $Q\in \mathcal{Q}_0$, $Q\subset F_{Q_0}$, \eqref{eq:zero} and \eqref{eq:toget}. In the above sum, $\langle f_2\mathbf{1}_{F_{Q_0}}\rangle_{S,q}=0$ unless $S\cap F_{Q_0}\neq \emptyset$. If $S\subset F_{Q_0}$ then by the definition of $\mathcal{Q}_0$ in \eqref{eq:zero} it follows that $S\in \mathcal{Q}_0$ and 
$$
\langle f_2\mathbf{1}_{F_{Q_0}}\rangle_{S,q}\le \langle f_2\rangle_{S,q}\le c_n \langle f_2\rangle_{Q_0,q}.
$$
If $S\cap F_{Q_0}\neq \emptyset$ as well as $S\cap E_{Q_0}\neq \emptyset$, then for some $P\in \mathcal{E}_{{Q}_0}$, $P\subset S$. But then by the maximality of $P$ we have
$$
\langle f_2\mathbf{1}_{F_{Q_0}}\rangle_{S,q}\le \langle f_2\rangle_{S,q}\le 2 \langle f_2\rangle_{Q_0,q}.
$$
Using this we obtain
$$
\sum_{Q\in \mathcal{Q}_0}\langle A_Q f_{1,m},f_2\mathbf{1}_{B_Q}\rangle\le C2^{m+1}\langle f_2\rangle_{Q_0,q}\sum_{S\in \mathcal{S}_m}|S|\langle \mathbf{1}_{E_m}\rangle_{S,p}.
$$
By Lemma \ref{lem:carleson} we get
$$
\sum_{Q\in \mathcal{Q}_0}\langle A_Q f_{1,m},f_2\mathbf{1}_{B_Q}\rangle\le C2^{m+1}\langle f_2\rangle_{Q_0,q}\langle \mathbf{1}_{E_m}\rangle_{Q_0,\rho_1}|Q_0|
$$
for some $\rho_1>p$. As $f_1=\sum_m f_{1,m}$ it follows that 
$$
\sum_{Q\in \mathcal{Q}_0}\langle A_Q f_{1,m},f_2\mathbf{1}_{B_Q}\rangle\le C\langle f_2\rangle_{Q_0,q}|Q_0|\sum_m2^m\langle \mathbf{1}_{E_m}\rangle_{Q_0,\rho_1}.
$$
We now claim that (see Lemma \ref{lem:lorentz} below)
\begin{equation}
\label{eq:twostar}
\sum_m2^m\langle \mathbf{1}_{E_m}\rangle_{Q_0,\rho_1}\le C\|f_1\|_{L^{\rho_1,1}(Q_0,d\mu)}
\end{equation}
where $L^{\rho_1,1}(Q_0,d\mu)$ stands for the Lorentz space defined on the measure space $(Q_0,d\mu)$, $d\mu=\frac{1}{|Q_0|}dx.$  We also know that on a probability space, the $L^{\rho_1,1}(Q_0,d\mu)$ norm is dominated by the $L^{\rho}(Q_0,d\mu)$ norm for any $\rho>\rho_1$ (Lemma \ref{lem:proba}). Using these two results we see that
$$
\sum_{Q\in \mathcal{Q}_0}\langle A_Q f_1,f_2\mathbf{1}_{B_Q}\rangle\le C\langle f_2\rangle_{Q_0,q}|Q_0|\langle f_1\rangle_{Q_0,\rho}.
$$
Hence \eqref{eq:star} is proved and thus completes the proof of Theorem \ref{thm:main}.
\end{proof}

It remains to prove Lemma \ref{lem:proba} and the claim \eqref{eq:twostar}. The first one is a well known fact which we include here for the sake of completeness.
\begin{lem}
\label{lem:proba}
On a probability space $ (X,d\mu)$, $L^p(X,d\mu)\subset L^{r,1}(X,d\mu)  \text{ for } p>r.$
\end{lem}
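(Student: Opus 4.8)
The plan is to reduce the claimed inclusion to an elementary estimate on the distribution function, using the standard representation of the Lorentz quasi-norm. Recall that if $d_f(\lambda):=\mu(\{x\in X:|f(x)|>\lambda\})$ denotes the distribution function of $f$, then
\[
\|f\|_{L^{r,1}(X,d\mu)}=c_r\int_0^\infty d_f(\lambda)^{1/r}\,d\lambda
\]
for a constant $c_r$ depending only on $r$ (this follows by writing $\|f\|_{L^{r,1}}=\int_0^\infty t^{1/r-1}f^*(t)\,dt$ in terms of the decreasing rearrangement and applying Fubini). Hence it suffices to show that the right-hand side is finite — indeed, controlled by $\|f\|_{L^p}$ — whenever $f\in L^p(X,d\mu)$ and $p>r$.

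First I would record the two bounds on $d_f$ at our disposal. Since $\mu$ is a probability measure we have the trivial bound $d_f(\lambda)\le 1$ for all $\lambda>0$, and Chebyshev's inequality gives $d_f(\lambda)\le \lambda^{-p}\|f\|_{L^p}^p$. Raising these to the power $1/r$ and splitting the integral at the threshold $\lambda_0:=\|f\|_{L^p}$, I would use the first bound on $(0,\lambda_0)$ to get $\int_0^{\lambda_0}d_f(\lambda)^{1/r}\,d\lambda\le \lambda_0=\|f\|_{L^p}$, and the second bound on $(\lambda_0,\infty)$ to get
\[
\int_{\lambda_0}^\infty d_f(\lambda)^{1/r}\,d\lambda\le \|f\|_{L^p}^{p/r}\int_{\lambda_0}^\infty \lambda^{-p/r}\,d\lambda=\frac{r}{p-r}\,\|f\|_{L^p},
\]
where the convergence of the last integral is exactly the point at which the hypothesis $p>r$ is used.

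Combining the two pieces yields $\|f\|_{L^{r,1}(X,d\mu)}\le \dfrac{c_r\,p}{p-r}\,\|f\|_{L^p(X,d\mu)}<\infty$, which proves the (continuous) inclusion $L^p(X,d\mu)\subset L^{r,1}(X,d\mu)$. There is no genuine obstacle in this argument; the only two things to be careful about are invoking the distribution-function formula for $\|\cdot\|_{L^{r,1}}$ with whatever normalization one adopts, and noting that the probability-measure hypothesis is used precisely to handle the contribution of small $\lambda$, where the Chebyshev bound alone is useless.
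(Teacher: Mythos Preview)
Your proof is correct. Both you and the paper exploit the probability-measure hypothesis to control the ``small'' regime and the $L^p$ norm to control the ``large'' regime, but the implementations differ. The paper works on the rearrangement side: since $d_f\le 1$ on a probability space, one has $f^*(t)=0$ for $t\ge 1$, so $\|f\|_{L^{r,1}}=\int_0^1 t^{-1/r'}f^*(t)\,dt$, and a single application of H\"older with exponents $(p,p')$ gives $\|f\|_{L^{r,1}}\le C_{r,p}\|f\|_{L^p}$, the finiteness of $\int_0^1 t^{-p'/r'}\,dt$ being the place where $p>r$ enters. You instead work on the distribution-function side, split at $\lambda_0=\|f\|_{L^p}$, and use the trivial bound $d_f\le 1$ below $\lambda_0$ and Chebyshev above. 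Your argument is marginally more elementary (no H\"older, only Chebyshev) and makes the dependence of the constant on $p-r$ explicit; the paper's argument is slightly shorter once one knows $f^*$ vanishes past $t=1$. Either route is perfectly adequate here.
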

\begin{proof}
Recall that the Lorentz spaces $ L^{p,q}(X,d\mu) $ are defined in terms of the Lorentz norms (see \cite{GM1})
$$ \| f\|_{p,q} =  
\begin{cases}
\Big(\int_0^{\infty}\big(t^{\frac{1}{p}}f^*(t)\big)^q\frac{dt}{t}\Big)^{\frac{1}{q}} \quad &\text{ if } q<\infty,\\
\sup_{t>0}t^{\frac{1}{p}}f^*(t)\quad &\text{ if } q=\infty,
\end{cases}
$$
where $ f^*(t) $ stands for the non-decreasing rearrangement of $ f.$  When  $f\in L^p(X,d\mu)$, as $d\mu$ is a probability measure, we know that the distribution function $df(s)$ of $f$ is bounded by $1$ and hence  $f^*(t)=0$ for $t\ge 1$. Now 
$$
\|f\|_{L^{r,1}(X,d\mu)}=\int_0^{\infty}t^{\frac{1}{r}-1}f^*(t)\,dt=\int_0^{1}t^{-\frac{1}{r'}}f^*(t)\,dt.
$$
By H\"older's inequality
$$
\|f\|_{L^{r,1}(X,d\mu)}\le \Big(\int_0^1 t^{-\frac{p'}{r'}} dt\Big)^{1/p'} \Big(\int_0^1f^*(t)^p\,dt\Big)^{1/p} =C_{r,p}\Big(\int_0^1f^*(t)^p\,dt\Big)^{1/p}
$$
where $C_{r,p}<\infty$ since $p'<r'$. This proves the claim since 
$$
\Big(\int_0^1f^*(t)^p\,dt\Big)^{\frac1p}=\|f\|_{L^p(X,d\mu)}.
$$
\end{proof} 

The claim \eqref{eq:twostar} is the content of the next lemma. 
\begin{lem}
\label{lem:lorentz}
Let  $f=\sum_mf_m$, $f_m=f\mathbf{1}_{E_m}$ where $E_m=\{x\in Q_0: 2^m\le |f(x)|\le 2^{m+1}\}.$ We  consider the probability measure $ d\mu = |Q_0|^{-1} dx $ on $ X = Q_0.$ Then for any $ r >1 $ we have
$$ \sum_m2^m\langle \mathbf{1}_{E_m}\rangle_{Q_0,r}\le C\|f\|_{L^{r,1}(Q_0,d\mu)}. $$  
\end{lem}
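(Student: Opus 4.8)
The plan is to rewrite both sides of the asserted inequality in terms of the distribution function $d_f(s):=\mu(\{x\in Q_0:|f(x)|>s\})$ of $f$ with respect to the probability measure $d\mu=|Q_0|^{-1}\,dx$, and then to compare them dyadically. First I would record the trivial identity $\langle \mathbf{1}_{E_m}\rangle_{Q_0,r}=\mu(E_m)^{1/r}$, so that the left-hand side equals $\sum_m 2^m\mu(E_m)^{1/r}$. Next I would recall the standard layer-cake formula for the $L^{r,1}$ quasi-norm: writing $f^*(t)=\int_0^\infty \mathbf{1}_{\{s<f^*(t)\}}\,ds$, interchanging the order of integration, and using the duality $\{t>0:f^*(t)>s\}=(0,d_f(s))$, one gets
\[
\|f\|_{L^{r,1}(Q_0,d\mu)}=\int_0^\infty t^{\frac1r-1}f^*(t)\,dt=r\int_0^\infty d_f(s)^{1/r}\,ds,
\]
an identity one may sanity-check on $f=\mathbf{1}_A$. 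This reduces the lemma to proving $\sum_m 2^m\mu(E_m)^{1/r}\lesssim \int_0^\infty d_f(s)^{1/r}\,ds$.

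The key point is a pointwise containment: since $|f(x)|\ge 2^m$ for every $x\in E_m$, we have $E_m\subseteq\{|f|>s\}$ for all $s\in[2^{m-1},2^m)$, hence $\mu(E_m)^{1/r}\le d_f(s)^{1/r}$ on that interval. Integrating over $[2^{m-1},2^m)$, which has length $2^{m-1}$, gives
\[
2^{m-1}\,\mu(E_m)^{1/r}\le \int_{2^{m-1}}^{2^m} d_f(s)^{1/r}\,ds,
\]
and since the dyadic intervals $\{[2^{m-1},2^m):m\in\Z\}$ partition $(0,\infty)$, summing over $m\in\Z$ yields
\[
\sum_{m}2^{m-1}\,\mu(E_m)^{1/r}\le \int_0^\infty d_f(s)^{1/r}\,ds=\frac1r\,\|f\|_{L^{r,1}(Q_0,d\mu)}.
\]
Multiplying by $2$ finishes the proof, with $C=2/r$.

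I do not anticipate a genuine obstacle here; the argument is bookkeeping with the distribution function. The only points needing a little care are: evaluating $d_f$ at thresholds strictly below $2^m$ — i.e. using the intervals $[2^{m-1},2^m)$ rather than $[2^m,2^{m+1})$ — so that the containment $E_m\subseteq\{|f|>s\}$ really holds; observing that the inequality is vacuous when $\|f\|_{L^{r,1}(Q_0,d\mu)}=\infty$, whereas in the application $f$ is bounded, so all sums are finite and the $E_m$ are eventually empty; and noting that on the probability space $(Q_0,d\mu)$ one has $\mu(E_m)\le 1$, so the tail $\sum_{m<0}2^m\mu(E_m)^{1/r}$ is in any case summable. (One should also keep in mind that $f^*$ here is the non-increasing rearrangement, as in the proof of Lemma~\ref{lem:proba}.)
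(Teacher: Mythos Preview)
Your proof is correct and follows essentially the same route as the paper: both rewrite the left-hand side as $\sum_m 2^m\mu(E_m)^{1/r}$, invoke the layer-cake identity $\|f\|_{L^{r,1}}\simeq\int_0^\infty d_f(s)^{1/r}\,ds$, and then compare the dyadic sum with the integral via the monotonicity of $d_f$. The only cosmetic difference is that you integrate over $[2^{m-1},2^m)$ and use the containment $E_m\subset\{|f|>s\}$ directly, whereas the paper integrates over $[2^m,2^{m+1})$ and first bounds $\mu(E_m)\le d_f(2^m)$; your version in fact handles the threshold bookkeeping a little more cleanly.
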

\begin{proof} 

We recall the following definition of the Lorentz norm in terms of $df(s)$:
$$
\|f\|_{L^{r,1}(X,d\mu)}=\int_0^{\infty}df(s)^{\frac{1}{r}}\,ds.
$$
As $df(s)$ is a decreasing function of $s$ we have
\begin{align*}
\|f\|_{L^{r,1}(X,d\mu)}&= \sum_m\int_{2^m}^{2^{m+1}}df(s)^{\frac{1}{r}}\,ds\\
&\ge \sum_mdf(2^{m})^{\frac1r}(2^{m+1}-2^m)\\
&=\frac12\sum_mdf(2^m)^{\frac1r}2^m.
\end{align*}
On the other hand, as $f_m=f\mathbf{1}_{E_m}$, it follows that  $
\mu(E_m) =df(2^m)-df(2^{m+1})\le df(2^m)$
and consequently,
$$
\sum_m \mu(E_m)^{\frac1r}2^m\le \sum_mdf(2^m)^{\frac1r}2^m\le 2\|f\|_{L^{r,1}(X,d\mu)}.
$$
This proves the lemma.
\end{proof}

In proving  Theorem \ref{thm:main} we have made use of the following lemma, which is proved  in \cite[Proposition 2.19]{Lacey}.  We include a proof  here for the convenience of the reader.
\begin{lem}[\cite{Lacey}]
\label{lem:carleson}
Let $\mathcal{S}$ be a collection of sparse subcubes  of a fixed dyadic cube $Q_0$ and let $1\le s<t<\infty$. Then, for a bounded function $\phi$,
$$
\sum_{Q\in \mathcal{S}}\langle \phi\rangle_{Q,s}|Q|\lesssim \langle \phi\rangle_{Q_0,t}|Q_0|.
$$
\end{lem}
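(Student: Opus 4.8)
The plan is to exploit the sparseness of $\mathcal{S}$ together with the $L^t$-boundedness of the dyadic maximal function on the probability space $(Q_0, |Q_0|^{-1}dx)$. First I would recall the definition of sparseness: there is a family $\{E_Q : Q\in\mathcal{S}\}$ of pairwise disjoint sets with $E_Q\subset Q$ and $|E_Q|\gtrsim |Q|$, so that $|Q|\lesssim |E_Q|$ for every $Q\in\mathcal{S}$. This lets me replace the sum over cubes by an integral. Precisely, since $\langle\phi\rangle_{Q,s}$ is constant on $E_Q$ and bounded above by the dyadic maximal function $M^{\mathcal{D}}_s\phi(x) := \sup_{Q\ni x}\langle\phi\rangle_{Q,s}$ for any $x\in E_Q\subset Q$, I get
\[
\sum_{Q\in\mathcal{S}}\langle\phi\rangle_{Q,s}|Q|\lesssim \sum_{Q\in\mathcal{S}}\langle\phi\rangle_{Q,s}|E_Q|\le \sum_{Q\in\mathcal{S}}\int_{E_Q}M^{\mathcal{D}}_s\phi(x)\,dx\le \int_{Q_0}M^{\mathcal{D}}_s\phi(x)\,dx,
\]
the last step using disjointness of the $E_Q$'s and $E_Q\subset Q_0$.

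Next I would bound $\int_{Q_0}M^{\mathcal{D}}_s\phi\,dx$ by $\langle\phi\rangle_{Q_0,t}|Q_0|$. Writing $d\mu=|Q_0|^{-1}dx$, the claim becomes $\int_{Q_0}M^{\mathcal{D}}_s\phi\,d\mu\lesssim \|\phi\|_{L^t(Q_0,d\mu)}$. Since $M^{\mathcal{D}}_s\phi=(M^{\mathcal{D}}(|\phi|^s))^{1/s}$ where $M^{\mathcal{D}}$ is the ordinary (localized) dyadic Hardy--Littlewood maximal operator, and $M^{\mathcal{D}}$ is bounded on $L^{t/s}(Q_0,d\mu)$ because $t/s>1$, I obtain
\[
\Big(\int_{Q_0}(M^{\mathcal{D}}_s\phi)^t\,d\mu\Big)^{1/t}=\Big(\int_{Q_0}(M^{\mathcal{D}}(|\phi|^s))^{t/s}\,d\mu\Big)^{1/t}\lesssim \Big(\int_{Q_0}|\phi|^t\,d\mu\Big)^{1/t}=\|\phi\|_{L^t(Q_0,d\mu)}.
\]
Finally, since $\mu$ is a probability measure, Jensen's (or Hölder's) inequality gives $\int_{Q_0}M^{\mathcal{D}}_s\phi\,d\mu\le\big(\int_{Q_0}(M^{\mathcal{D}}_s\phi)^t\,d\mu\big)^{1/t}\lesssim\|\phi\|_{L^t(Q_0,d\mu)}=\langle\phi\rangle_{Q_0,t}$. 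Multiplying through by $|Q_0|$ yields the claim.

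The only genuine point that needs care is the $L^{t/s}$-boundedness of the dyadic maximal operator on the homogeneous space $\H^n$ equipped with the left-invariant Koranyi metric: one must know that the dyadic grids from Theorem \ref{thm:homogrid} support a Hardy--Littlewood theory with the usual weak-$(1,1)$ and strong-$(p,p)$ bounds, which follows from the doubling property of Lebesgue measure on $\H^n$ and the standard Vitali-type covering argument adapted to dyadic cubes. Everything else is bookkeeping with the sparseness condition and Hölder's inequality on a probability space. I do not expect any real obstacle here; this is the exact Heisenberg analogue of \cite[Proposition 2.19]{Lacey}.
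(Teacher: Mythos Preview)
Your proof is correct and uses the same ingredients as the paper's: sparsity to replace $|Q|$ by $|E_Q|$, the pointwise domination by the dyadic maximal function, and its $L^{t/s}$-boundedness (which for dyadic cubes is elementary from the nested structure, so your caveat about the homogeneous-space theory is not even really needed). The paper organizes these steps in a slightly different order---it first applies H\"older's inequality to the sum, writing $|Q|=|Q|^{1/t}|Q|^{1/t'}$ and splitting off $\bigl(\sum_{Q\in\mathcal S}|Q|\bigr)^{1/t'}\lesssim |Q_0|^{1/t'}$ by sparsity, and then bounds the remaining factor $\bigl(\sum_{Q\in\mathcal S}\langle|\phi|^s\rangle_Q^{t/s}|Q|\bigr)^{1/t}$ by $\|\phi\mathbf 1_{Q_0}\|_t$, which is precisely your maximal-function argument run at exponent $t/s$ rather than $1$---but the content is the same.
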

\begin{proof}
By sparsity,
\begin{align*}
\sum_{Q\in \mathcal{S}}\langle \phi\rangle_{Q,s}|Q|&=\sum_{Q\in \mathcal{S}}\langle \phi\rangle_{Q,s}|Q|^{1/t+1/t'}\\
&\le \Big(\sum_{Q\in \mathcal{S}}\langle \phi\rangle_{Q,s}^t|Q|\Big)^{1/t} \Big(\sum_{Q\in \mathcal{S}}|Q|\Big)^{1/t'}\\
&\lesssim \Big(\sum_{Q\in \mathcal{S}}\langle |\phi|^s\rangle_{Q}^{t/s}|Q|\Big)^{1/t}|Q_0|^{1/t'}\\
&\lesssim \|\phi\mathbf{1}_{Q_0}\|_t|Q_0|^{1/t'}.
\end{align*}
\end{proof}

\section{Boundedness properties  for the lacunary spherical maximal function}
\label{sec:bound}

Consequences inferred from sparse domination are well-known and have been studied in the literature. We refer to \cite[Section 4]{BC} for an account of the same. In particular, sparse domination provides unweighted and weighted inequalities for the operators under consideration.

The strong boundedness is a result by now standard, see \cite{CUMP}, also \cite[Proposition 6.1]{Lacey}. Our Theorem \ref{thm:spherical} follows from Theorem \ref{thm:mainSH} and Proposition \ref{prop:un}.
\begin{prop}[\cite{CUMP}]
\label{prop:un}
Let $1\le r<s'\le\infty$. Then,
$$
\Lambda_{r,s}(f_1,f_2)\lesssim \|f_1\|_{L^p}\|f_2\|_{L^{p'}}, \quad r<p<s'.
$$
\end{prop}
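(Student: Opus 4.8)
\textbf{Proof proposal for Proposition \ref{prop:un}.}

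The plan is to recall that this is the standard unweighted consequence of a sparse bound, so the argument is essentially a duality/summation argument once one combines a sparse form with H\"older's inequality and the boundedness of the dyadic maximal operator on $L^p$. First I would fix $1 \le r < s' \le \infty$ and $r < p < s'$, take $f \in L^p(\H^n)$ and $g \in L^{p'}(\H^n)$ nonnegative and compactly supported, and recall that by hypothesis there is an $\eta$-sparse collection $\mathcal{S}$ with $\Lambda_{r,s}(f,g) = \sum_{S \in \mathcal{S}} |S| \langle f\rangle_{S,r} \langle g\rangle_{S,s}$. The goal is to bound this sum by $\|f\|_{L^p}\|g\|_{L^{p'}}$.

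The key step is to use the disjoint major subsets: since $\mathcal{S}$ is $\eta$-sparse, each $S$ carries $E_S \subset S$ with $|E_S| > \eta |S|$ and the $E_S$ pairwise disjoint, so $|S| \le \eta^{-1} |E_S| \le \eta^{-1}\int_{E_S}\big(M_r f(x)/\langle f\rangle_{S,r}\big)\cdots$ — more precisely, for $x \in E_S \subset S$ one has $\langle f\rangle_{S,r} \le M_{r}f(x)$ and $\langle g\rangle_{S,s} \le M_s g(x)$ where $M_\rho h := \big(M(|h|^\rho)\big)^{1/\rho}$ and $M$ is the (dyadic Hardy--Littlewood) maximal function on the space of homogeneous type $\H^n$. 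Hence
\begin{align*}
\sum_{S \in \mathcal{S}} |S| \langle f\rangle_{S,r}\langle g\rangle_{S,s}
&\le \eta^{-1}\sum_{S \in \mathcal{S}} \int_{E_S} \langle f\rangle_{S,r}\langle g\rangle_{S,s}\, dx \\
&\le \eta^{-1}\sum_{S \in \mathcal{S}} \int_{E_S} M_r f(x)\, M_s g(x)\, dx \\
&\le \eta^{-1} \int_{\H^n} M_r f(x)\, M_s g(x)\, dx,
\end{align*}
using the pairwise disjointness of the $E_S$ in the last step. Then by H\"older's inequality this is at most $\eta^{-1}\|M_r f\|_{L^p}\|M_s g\|_{L^{p'}}$. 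Finally I would invoke the $L^p$ boundedness of the Hardy--Littlewood maximal function on the space of homogeneous type $\H^n$: since $p > r$ we have $\|M_r f\|_{L^p} = \|M(|f|^r)\|_{L^{p/r}}^{1/r} \lesssim \||f|^r\|_{L^{p/r}}^{1/r} = \|f\|_{L^p}$, and since $p' > s$ (equivalent to $p < s'$) likewise $\|M_s g\|_{L^{p'}} \lesssim \|g\|_{L^{p'}}$. Combining gives $\Lambda_{r,s}(f,g) \lesssim \|f\|_{L^p}\|g\|_{L^{p'}}$, as claimed.

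The only mild subtlety — and the one point I would be careful about — is the condition $p < s'$, i.e. $p' > s$: this is exactly what is needed so that the maximal operator $M_s$ is bounded on $L^{p'}$, and it is where the hypothesis $r < s'$ together with $r < p < s'$ enters; the endpoint cases ($r=1$ uses the weak-type $(1,1)$ replaced by the fact that $p>1$, and $s'=\infty$ meaning $s=1$) are handled the same way. I do not anticipate a genuine obstacle here since $\H^n$ with the Kor\'anyi (left-invariant) metric and Haar measure is a space of homogeneous type, so the standard maximal function theory applies verbatim; this lemma is quoted from \cite{CUMP} precisely because it is routine.
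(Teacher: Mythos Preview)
Your proof is correct and follows essentially the same approach as the paper: use the disjoint sets $E_S$ with $|E_S|\ge\eta|S|$ to replace $|S|$ by an integral over $E_S$, dominate the averages by the Hardy--Littlewood maximal functions $(\Lambda|f|^r)^{1/r}$ and $(\Lambda|g|^s)^{1/s}$ on $E_S$, sum using disjointness, and finish with H\"older and the $L^p$-boundedness of $\Lambda$ for $p>r$ and $p'>s$. The paper's argument is the same in every essential step.
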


For the sake of completeness we reproduce the proof, which is quite simple:  as the collection $ \mathcal{S} $ is  sparse, we have
$$ \Lambda_{r,s}(f_1,f_2) \leq C \sum_{S \in \mathcal{S}} \int_{E_S} \langle f_1\rangle_{S,r} \langle f_2 \rangle_{S,s} \mathbf{1}_{E_S} dx$$
where $ E_S \subset S $ are disjoint with the property that $ |E_S| \geq \eta |S|.$ The above leads to the estimate
$$ \Lambda_{r,s}(f_1,f_2) \leq C \int_{\H^n}  \big(M_{\operatorname{HL}} |f_1|^r(x)\big)^{1/r} \big(M_{\operatorname{HL}} |f_2|^s(x)\big)^{1/s} dx $$
where $M_{\operatorname{HL}}h $ stands for  the Hardy-Littlewood maximal function of $ h.$ In view of  the boundedness of $M_{\operatorname{HL}}$, an application of H\"older's inequality completes the proof of the proposition. 

A weight $w$ is a non-negative locally integrable function defined on $\H^n$.  Given $1<p<\infty$, the Muckhenhoupt class of weights $A_p$ consists of all $w$ satisfying
$$
[w]_{A_p}:=\sup_Q \langle w\rangle_{Q}\langle \sigma \rangle_{Q}^{p-1}<\infty,\quad \sigma:= w^{1-p'}
$$
where the supremum is taken over all cubes $Q$ in $\H^n$.  On the other hand, a weight $w$ is in the reverse H\"older class $\operatorname{RH}_p$, $1\le p<\infty$, if
$$
[w]_{\operatorname{RH}_p}=\sup_Q\langle w\rangle_Q^{-1}\langle w\rangle_{Q,p}<\infty,
$$
again the supremum taken over all cubes in $\H^n$.

The following theorem was shown in \cite[Section 6]{BFP}.

\begin{thm}[\cite{BFP}]
\label{thm:BFP}
Let $1\le p_0<q_0'\le\infty$. Then,
$$
\Lambda_{p_0,q_0}(f_1,f_2)\le \{[w]_{A_{p/p_0}}\cdot[w]_{\operatorname{RH}_{(q_0'/p)'}}\}^{\alpha}\|f_1\|_{L^p(w)}\|f_2\|_{L^{p'}(\sigma)}, \quad p_0<p<q_0',
$$
with $\alpha=\max\Big\{\frac{1}{p-1},\frac{q_0'-1}{q_0'-p}\Big\}$. 
\end{thm}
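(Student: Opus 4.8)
The plan is to derive the weighted bilinear bound for the sparse form $\Lambda_{p_0,q_0}$ directly from the sparseness of the underlying collection of cubes, reducing everything to the action of a suitable weighted maximal operator together with H\"older's inequality with the pair of exponents governed by $w$ and $\sigma=w^{1-p'}$. Concretely, I would first observe that, by the definition of $\eta$-sparseness, for the collection $\mathcal{S}$ there are pairwise disjoint sets $E_S\subset S$ with $|E_S|\ge\eta|S|$, so that
$$
\Lambda_{\mathcal{S},p_0,q_0}(f,g)=\sum_{S\in\mathcal{S}}|S|\langle f\rangle_{S,p_0}\langle g\rangle_{S,q_0}
\le\frac{1}{\eta}\sum_{S\in\mathcal{S}}|E_S|\,\langle f\rangle_{S,p_0}\langle g\rangle_{S,q_0}.
$$
The right-hand side should then be rewritten against the weights: inserting $w^{1/p}w^{-1/p}$ and $\sigma^{1/p'}\sigma^{-1/p'}$ inside the averages, one expresses $\langle f\rangle_{S,p_0}$ and $\langle g\rangle_{S,q_0}$ in terms of $w$-weighted and $\sigma$-weighted averages of $fw^{1/p}$ and $g\sigma^{1/p'}$, picking up factors that are exactly controlled by $[w]_{A_{p/p_0}}$ and $[w]_{\operatorname{RH}_{(q_0'/p)'}}$.

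The key steps, in order, are: (1) introduce the weighted fractional/maximal operators $M^w_{p_0}$ and $M^\sigma_{q_0}$ adapted to the grid, and bound $\langle f\rangle_{S,p_0}\lesssim$ (a weighted average of a power of $f$) times a power of $\langle w\rangle_S$, and similarly for $g$ with $\sigma$; (2) use the $A_{p/p_0}$ and $\operatorname{RH}_{(q_0'/p)'}$ conditions to replace the mixed averages of $w$ and $\sigma$ over $S$ by the stated constant $\{[w]_{A_{p/p_0}}[w]_{\operatorname{RH}_{(q_0'/p)'}}\}^{\alpha}$, with $\alpha=\max\{(p-1)^{-1},(q_0'-1)/(q_0'-p)\}$ emerging from balancing the exponents; (3) sum over $S\in\mathcal{S}$ using disjointness of the $E_S$ to pass from $\sum_S|E_S|(\cdots)$ to an integral $\int_{\H^n}(M^w_{p_0}(fw^{-1/p}))\,(M^\sigma_{q_0}(g\sigma^{-1/p'}))\,d\nu$ over an appropriate measure; (4) apply H\"older's inequality in $L^{p}(w)$ and $L^{p'}(\sigma)$ together with the boundedness of the weighted maximal operators $M^w_{p_0}:L^p(w)\to L^p(w)$ (valid since $p>p_0$) and $M^\sigma_{q_0}:L^{p'}(\sigma)\to L^{p'}(\sigma)$ (valid since $p'>q_0$, equivalently $p<q_0'$), to conclude $\lesssim\|f\|_{L^p(w)}\|g\|_{L^{p'}(\sigma)}$.

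The main obstacle is the bookkeeping in step (2): tracking precisely how the $A_{p/p_0}$ constant and the reverse-H\"older constant of $w$ combine, and verifying that the exponent on the product is exactly $\alpha=\max\{\frac{1}{p-1},\frac{q_0'-1}{q_0'-p}\}$ rather than something weaker. This requires writing $p/p_0$-averages and $(q_0'/p)'$-averages carefully and using that $w\in A_{p/p_0}$ iff $\sigma=w^{1-p'}$ satisfies a dual condition, then optimising; the self-improvement of reverse-H\"older classes and of $A_r$ weights (so that the relevant slightly-larger exponents are still admissible) is what makes the weighted maximal bounds in step (4) applicable. Since the statement is attributed to \cite{BFP}, I would present this as an adaptation of that argument to the space of homogeneous type $(\H^n,d_L,dx)$, noting only that the dyadic grids from Theorem \ref{thm:homogrid} and the doubling property of Lebesgue measure on $\H^n$ are all that is needed for the Euclidean proof to go through verbatim.
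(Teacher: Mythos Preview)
The paper does not prove this theorem at all; it is quoted verbatim from \cite[Section 6]{BFP} and invoked as a black box to deduce Corollary~\ref{cor:weight}. There is therefore no ``paper's own proof'' to compare against.

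Your sketch follows the standard route for weighted bounds on sparse forms and is broadly in the spirit of \cite{BFP}. Two remarks. First, in step~(4) you appeal to the $L^p(w)$-boundedness of the weighted maximal operator $M^w$; this is fine because $M^w$ (the maximal function with respect to the measure $w\,dx$) is bounded on $L^r(w)$ for $r>1$ with constant depending only on $r$ and the doubling structure, so no additional powers of $[w]$ enter here. Second, the sharp exponent $\alpha=\max\{\tfrac{1}{p-1},\tfrac{q_0'-1}{q_0'-p}\}$ must come entirely from step~(2), and the argument there is more than bookkeeping: one writes $\langle f\rangle_{S,p_0}$ and $\langle g\rangle_{S,q_0}$ against $w(S)$ and $\sigma(S)$ via H\"older, and then the product $\big(\tfrac{w(S)}{|S|}\big)^{a}\big(\tfrac{\sigma(S)}{|S|}\big)^{b}$ that appears has to be bounded by the stated power of $[w]_{A_{p/p_0}}[w]_{\operatorname{RH}_{(q_0'/p)'}}$ using both conditions simultaneously; the two branches of the maximum arise depending on which of the two H\"older applications is the bottleneck. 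Your outline is correct but this is the place where a full proof would need the explicit computation rather than a reference to ``balancing the exponents''. Since the paper simply cites the result, your proposal already goes beyond what is required here.
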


In view of Theorem \ref{thm:BFP} and Theorem \ref{thm:mainSH} we can obtain the following corollary: it provides unprecedented weighted estimates for the lacunary maximal spherical means in $\H^n$. We only state a qualitative result in order to simplify the presentation. 
\begin{cor}
\label{cor:weight}
Let $n\ge2$ and define
$$
\frac{1}{\phi(1/p_0)}=\begin{cases}1-\frac{1}{p_0}\frac{3}{3n+1}, \quad 0<\frac1p_0\le \frac{3n+1}{3n+4},\\
\frac{3n+1}{3}\Big(1-\frac1p_0\Big), \quad  \frac{3n+1}{3n+4}<\frac1p_0<1.
\end{cases}
$$
Then $M_{\operatorname{lac}}$ is bounded on $L^p(w)$ for $w\in A_{p/p_0}\cap \operatorname{RH}_{(\phi(1/p_0)'/p)'}$ and all $1<p_0<p<(\phi(1/p_0))'$.
\end{cor}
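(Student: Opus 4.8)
\medskip\noindent\textbf{Proposed proof.}
The plan is to feed the sparse domination of Theorem~\ref{thm:mainSH} into the quantitative weighted estimate for sparse forms recorded in Theorem~\ref{thm:BFP}, and then to pass from the resulting bilinear inequality to the operator norm by duality. Fix $1<p_0<\infty$ and put $q_0:=\phi(1/p_0)$, so that $1/q_0=1-1/(np_0)$ when $0<1/p_0\le n/(n+1)$ and $1/q_0=n(1-1/p_0)$ when $n/(n+1)<1/p_0<1$. By the very definition of $\phi$, the point $(1/p_0,1/q_0)$ lies on one of the two edges of the ``Euclidean'' triangle $T_E$ with vertices $(0,1)$, $(1,0)$ and $V:=(\frac n{n+1},\frac n{n+1})$ that issue from $V$, and as $p_0$ ranges over $(1,\infty)$ it sweeps out these two edges, minus their endpoints $(0,1)$ and $(1,0)$.

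The first step is to check that every such point lies in the \emph{interior} of the triangle $T$ from Theorem~\ref{thm:mainSH}, namely the one with vertices $(0,1)$, $(1,0)$ and $W:=(\frac{3n+1}{3n+4},\frac{3n+1}{3n+4})$. Since $T_E$ and $T$ are convex and share the edge $[(0,1),(1,0)]$, it suffices to verify that $V\in\operatorname{int}(T)$: then each segment from $(0,1)$ or $(1,0)$ to $V$, minus its endpoint lying on $\partial T$, lies in $\operatorname{int}(T)$. The condition $V\in\operatorname{int}(T)$ amounts, after a one-line computation, to $n\ge 2$ together with $\frac n{n+1}<\frac{3n+1}{3n+4}$ (equivalently $3n^2+4n<3n^2+4n+1$), so it holds; it is exactly this constraint that dictates the piecewise shape of $\phi$. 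Along the way one records $1/p_0+1/q_0>1$, i.e. $p_0<q_0'$, so that Theorem~\ref{thm:BFP} applies to the pair $(p_0,q_0)$, and by construction $q_0'=\phi(1/p_0)'$.

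Now take $f,g\ge 0$ bounded and compactly supported. Theorem~\ref{thm:mainSH} produces a sparse family $\mathcal S$ with $\langle M_{\operatorname{lac}}f,g\rangle\le C\,\Lambda_{\mathcal S,p_0,q_0}(f,g)$, and Theorem~\ref{thm:BFP} (valid for every sparse form) gives, for all $p$ with $p_0<p<q_0'=\phi(1/p_0)'$ and every weight $w$,
$$
\langle M_{\operatorname{lac}}f,g\rangle\le C\big\{[w]_{A_{p/p_0}}\,[w]_{\operatorname{RH}_{(\phi(1/p_0)'/p)'}}\big\}^{\alpha}\,\|f\|_{L^p(w)}\,\|g\|_{L^{p'}(\sigma)},\qquad \sigma=w^{1-p'} .
$$
Taking the supremum over all such $g$ with $\|g\|_{L^{p'}(\sigma)}\le 1$, and using that $\sigma=w^{-p'/p}$ so that $L^{p'}(\sigma)$ is the dual of $L^p(w)$ for the unweighted pairing, yields $\|M_{\operatorname{lac}}f\|_{L^p(w)}\le C(w)\|f\|_{L^p(w)}$ with $C(w)<\infty$ precisely when $w\in A_{p/p_0}\cap\operatorname{RH}_{(\phi(1/p_0)'/p)'}$. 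A standard density and monotone-convergence argument removes the compact support and boundedness of $f$, which is the assertion of the corollary.

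I do not anticipate a genuine obstacle: the passage from sparse domination to weighted norm inequalities is by now a standard consequence (see \cite{BC,CUMP,BFP}). The only points demanding care are the elementary geometric verification of the second paragraph---this is where the shape of $\phi$, and the nonemptiness of the range $p_0<p<(\phi(1/p_0))'$, come from---and keeping the exponent bookkeeping straight, so that the reverse Hölder index $(q_0'/p)'$ emerging from Theorem~\ref{thm:BFP} is exactly the index $(\phi(1/p_0)'/p)'$ appearing in the statement.
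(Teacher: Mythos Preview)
Your proposal is correct and follows exactly the route the paper intends: combine Theorem~\ref{thm:mainSH} with Theorem~\ref{thm:BFP} and read off the admissible weights by duality. The paper itself offers no details beyond pointing to these two results, so your explicit verification that the points $(1/p_0,1/\phi(1/p_0))$ lie in the interior of the larger triangle of Theorem~\ref{thm:mainSH} (via $V=(\tfrac{n}{n+1},\tfrac{n}{n+1})\in\operatorname{int}(T)$ and the convexity argument) is a welcome addition rather than a deviation.
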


\section{The full maximal function}
\label{sec:full}

As in the case of  the lacunary spherical maximal function we can also deduce sparse bounds for the full maximal function. 

\begin{thm} 
\label{thm:sparseF}
Assume $ n \geq 2 $. Let $ 1 < p, q < \infty $ be such that $ (\frac{1}{p},\frac{1}{q}) $ belongs to the interior of the triangle joining the points $ (0,1)$, $ (\frac{2n-1}{2n},\frac{1}{2n})$ and $ (\frac{3n+1}{3n+7},\frac{3n+1}{3n+7})$. Then for any  pair of compactly supported bounded functions $ (f_1,f_2) $ there exists a $ (p,q)$-sparse form such that 
$$ \langle M_{\operatorname{full}}f_1,f_2\rangle \leq C \Lambda_{\mathcal{S},p,q}(f_1,f_2).
$$
\end{thm} 
Weighted norm inequalities for the full maximal function are implied from the sparse domination result, see Subsection \ref{sec:sparseF}. As explained in the Introduction, we expect that the range will not be sharp.

We will make use of the fixed time estimates for the operator $A_r$ from Section \ref{sec:Lp} trivially integrating in the $r$-variable and the known $ L^p$ estimates for $
M_{\operatorname{full}}$ to show the following theorem for the local (full) maximal operator. This theorem will be later used to prove Theorem~\ref{thm:sparseF}. Let us define, for some $0<\delta<1$,
$$ 
M_{\delta} f(z,t) := \sup_{1 \leq r \leq \delta^{-1}} |A_rf(z,t)|.
$$ 

\begin{thm}
\label{thm:LPQ}
Assume that $n\ge 2$ and  $0<\delta<1$. Then 
$$
M_{\delta} :L^{p}(\H^n)\to L^{q}(\H^n)
$$
whenever $\big(\frac{1}{p},\frac{1}{q}\big)$ lies in the interior of the triangle joining the points $(0,0)$, $\big(\frac{2n-1}{2n},\frac{2n-1}{2n} \big)$ and $\big(\frac{3n+1}{3n+7},\frac{6}{3n+7}\big)$, as well as the straight line segment joining the points $(0,0),\big(\frac{2n-1}{2n},\frac{2n-1}{2n} \big)$.
\end{thm}

Let $0<\delta <1$.
Observe that,  for $ f\ge 0$, by Fundamental Theorem of Calculus we obtain
$$
\sup_{1\le r\le \delta^{-1}}|A_rf|^q=|A_1f|^q+\int_1^{\delta^{-1}}q|A_rf|^{q-1}\Big(\frac d{dr}A_rf\Big)\,dr. 
$$
H\"older's inequality twice gives, for any $q\ge1$, 
\begin{equation}
\label{eq:FTC}
 \|M_{\delta}f\|_{L^q(\H^n)}  \leq  \|A_1f\|_{L^q(\H^n)}+\|A_rf\|_{L^q(\H^n\times [1,\delta^{-1}])}^{1-1/q}\Big\|\frac d{dr}A_rf\Big\|_{L^q(\H^n\times [1,\delta^{-1}])}^{1/q}.
\end{equation}
Since we already have $L^p -L^q $ estimates for $A_1f$ (see Corollary \ref{cor:LpLq}), we will only deal with $B_rf:=\frac d{dr}A_rf $ to get the $ L^p-L^q$ estimates for $M_{\delta}f$ in Theorem~\ref{thm:LPQ}. 

We will also use Corollary \ref{cor:dilat2} in order to prove a continuity condition of $M_{\delta}f$. The program is completely analogous to the lacunary case, only requiring more technical effort. We will omit the details in many instances. 

\subsection{$L^p-L^q$ estimates for the derivatives of the spherical means}
\label{sec:Lpfull}

Recall the expansion of $A_rf$ in \eqref{eq:expression}.
Now using the fact that $B_rf = \frac d{dr}A_rf $, we get the following expression for $B_rf $ 
\begin{multline}
\label{eq: expressionB}
B_rf(z,t)=(2\pi)^{-n-1}\\
\times\int_{-\infty}^{\infty}e^{-i\lambda t}\Big(\sum_{k=0}^{\infty}\big(-\frac{|\lambda| r}{2}\psi_k^{n-1}(\sqrt{|\lambda|}r)-\frac{k|\lambda|r}{n}\psi_{k-1}^{n}(\sqrt{|\lambda|}r)\big)f^{\lambda}\ast_{\lambda}\varphi_k^{\lambda}(z)\Big)|\lambda|^n\,d\lambda.
\end{multline}
where we have used the fact that $\frac{d}{dr}L_{k}^{\alpha}(r)=-L_{k-1}^{\alpha+1}$, see \cite[Chapter V]{Sz}. For $u>0$, we define
\begin{multline}
\label{eq:Abeta2}
\mathcal{B}_{u}^{\beta}f(z,t)=(2\pi)^{-n-1}\int_{-\infty}^{\infty}e^{-i\lambda t}\Big(\sum_{k=0}^{\infty}\big(-\frac{|\lambda|u}{2}\psi_k^{2\beta+n-1}(\sqrt{|\lambda|}u)\\
-\frac{k|\lambda|u}{n}\psi_k^{2\beta+n}(\sqrt{|\lambda|}u)\big)f^{\lambda}\ast_{\lambda}\varphi_k^{\lambda}(z)\Big)|\lambda|^n\,d\lambda,
\end{multline}
for $\Re(2\beta+n-1)>-1$. If $\beta=0$ we have $\mathcal{B}_{u}^{0}=B_u$. 

Let us consider also a rescaling of the formula \eqref{eq:Abetaa} namely, the operator $(\mathcal{A}_{\beta})_u$ given by 
$$
(\mathcal{A}_{\beta})_uf(z,t)=2\frac{\Gamma(\beta+n)}{\Gamma(\beta)\Gamma(n)}\int_0^1s^{2n-1}(1-s^2)^{\beta-1}P_{u^2(1-s^2)}f\ast \mu_{us}(z,t)\,ds. 
$$
Next we use the above expression along with the fact that $ \mathcal{B}_u^{\beta}f(z,t) = \frac d{du} (\mathcal{A}_{2\beta})_uf(z,t) $ to get the following expression for $\mathcal{B}_{u}^{\beta}f $.
\begin{lem}
\begin{align*}
\label{eq:beta }
& \mathcal{B}_u^{\beta}f (z,t)= 2\frac{\Gamma(2\beta+n)}{\Gamma(2\beta)\Gamma(n)u}\\
&\times\int_0^1s^{2n-1}(1-s^2)^{2\beta-1}\Big(c_n+\frac{2s(2\beta-1)}{1-s^2}\Big)P_{u^2(1-s^2)}f\ast \mu_{us}(z,t)\,ds\\
&\qquad -2\frac{\Gamma(2\beta+n)}{\Gamma(2\beta)\Gamma(n)}\int_0^1s^{2n-1}(1-s^2)^{2\beta-1}Q_{u^2(1-s^2)}f\ast \mu_{us}(z,t)\,ds\\
& \qquad +  2\frac{\Gamma(2\beta+n)}{\Gamma(2\beta)\Gamma(n)u}\int_0^1s^{2n}(1-s^2)^{2\beta-1}\frac{d}{ds}P_{u^2(1-s^2)}f\ast \mu_{us}(z,t)\,ds, 
\end{align*}
where $Q_{s}f:=f\ast q_{s}$ and $q_{s}(t)=c\frac{s^3}{(s^2+16t^2)^2}$ for some positive constant $c$.
\end{lem}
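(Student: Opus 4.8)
The plan is to establish the formula by differentiating in $u$ the integral representation of $\mathcal{A}_u^{2\beta}$, exploiting the identity $\mathcal{B}_u^{\beta}f=\frac{d}{du}\mathcal{A}_u^{2\beta}f$ recorded in the text. First I would justify that identity on the Fourier transform side. Running the argument of Lemma~\ref{lem:family} with $|\lambda|$ replaced by $|\lambda|u^2$ throughout (the Laguerre connection formula \eqref{eq:connection} being scale invariant), one sees that $\mathcal{A}_u^{2\beta}f$ is the right Fourier multiplier with symbol $\psi_k^{2\beta+n-1}(\sqrt{|\lambda|}u)P_k(\lambda)$; differentiating this in $u$ and using $\frac{d}{dx}L_k^{\alpha}(x)=-L_{k-1}^{\alpha+1}(x)$ — equivalently $\frac{d}{dr}\psi_k^{\nu}(r)=-\frac r2\psi_k^{\nu}(r)-\frac{kr}{\nu+1}\psi_{k-1}^{\nu+1}(r)$ — reproduces the symbol that defines $\mathcal{B}_u^{\beta}$ in \eqref{eq:Abeta2}. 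Since for $\Re\beta$ large the integrand $r^{2n-1}(1-r^2)^{2\beta-1}P_{u^2(1-r^2)}f\ast\mu_{ur}$ and its $u$-derivative are dominated uniformly near $r=0$ and $r=1$, differentiation under the integral sign is legitimate there; the resulting identity then extends to all admissible $\beta$ by analytic continuation in $\beta$.

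Two computational ingredients feed into $\frac{d}{du}$ of the integrand. The first is the Poisson-semigroup derivative identity $\frac{\partial}{\partial s}P_sf=\frac1s(P_sf-Q_sf)$, checked by Fourier transform in the central variable: one has $\widehat{p_s}(\lambda)=e^{-s|\lambda|/4}$, and differentiating $\int_{-\infty}^{\infty}e^{i\lambda t}(a^2+t^2)^{-1}\,dt=\frac{\pi}{a}e^{-a|\lambda|}$ in $a$ gives (for the correct normalisation of $c$) $\widehat{q_s}(\lambda)=(1+\frac{s|\lambda|}{4})e^{-s|\lambda|/4}$, so that $\widehat{p_s}-\widehat{q_s}=-\frac{s|\lambda|}{4}e^{-s|\lambda|/4}=s\,\frac{d}{ds}\widehat{p_s}$. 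The second is a reorganisation of the derivative of the spherical means in their radius: writing $F(u)=P_{u^2(1-r^2)}f\ast\mu_{ur}$ and applying the chain rule in the Poisson parameter $s=u^2(1-r^2)$ and the radius $\rho=ur$, one compares $\frac{dF}{du}$ with $\frac{d}{dr}\big(P_{u^2(1-r^2)}f\ast\mu_{ur}\big)$ and eliminates the radial derivative of the spherical mean in favour of the latter, picking up an extra multiple of $(P_sf-Q_sf)\ast\mu_{ur}$; concretely one finds $\frac{dF}{du}=\frac{2}{u(1-r^2)}(P_sf-Q_sf)\ast\mu_{ur}+\frac{r}{u}\frac{d}{dr}\big(P_sf\ast\mu_{ur}\big)$. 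Multiplying by $\frac{\Gamma(2\beta+n)}{\Gamma(2\beta)\Gamma(n)}r^{2n-1}(1-r^2)^{2\beta-1}$, integrating in $r$, and performing one integration by parts in $r$ on the term carrying $\frac{d}{dr}$ — the boundary terms vanishing by the factor $r^{2n}$ at $r=0$ and $(1-r^2)^{2\beta-1}$ at $r=1$, valid once $\Re\beta>\frac12$ — redistributes the weights so as to produce the $P$-piece with weight $r^{2n-1}(1-r^2)^{2\beta-1}\big(c_n+\frac{2r(2\beta-1)}{1-r^2}\big)$, the $Q$-piece, and the remaining piece containing $\frac{d}{dr}$.

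The main obstacle is essentially bookkeeping rather than conceptual: one must track carefully the Gamma-function constants $\frac{\Gamma(2\beta+n)}{\Gamma(2\beta)\Gamma(n)}$ and the $2\beta\mapsto\beta$ relabelling, the powers of $u$ generated by the chain rule and (if one prefers) by the substitution $\rho=ur$, and the signs in the integration by parts, in order to arrive exactly at the stated coefficients $c_n$ and $\frac{2r(2\beta-1)}{1-r^2}$. In addition, both the differentiation under the integral and the integration by parts are only directly valid in the half-plane $\Re\beta>\frac12$ (where the weights are integrable and the boundary terms disappear), so the identity for the full admissible range $\Re(2\beta+n-1)>-1$ must be obtained afterwards by analytic continuation, using that both sides are holomorphic in $\beta$ once paired with a fixed Schwartz function.
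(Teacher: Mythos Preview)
Your overall strategy coincides with the paper's: differentiate $\mathcal A_u^{2\beta}$ in $u$, use the Poisson identity $\partial_s P_s=\tfrac1s(P_s-Q_s)$, and integrate by parts in $r$. Your chain-rule identity
\[
\frac{dF}{du}=\frac{2}{u(1-r^2)}(P_s-Q_s)\ast\mu_{ur}+\frac{r}{u}\,\frac{d}{dr}\bigl(P_s\ast\mu_{ur}\bigr)
\]
is correct. The confusion is in the last step. If you integrate by parts on $\int_0^1 r^{2n}(1-r^2)^{2\beta-1}\frac{d}{dr}(P_s\ast\mu_{ur})\,dr$ as you describe, the $\frac{d}{dr}$ disappears completely; there is no ``remaining piece containing $\frac{d}{dr}$''. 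What survives in the lemma is $\bigl(\tfrac{d}{dr}P_{u^2(1-r^2)}f\bigr)\ast\mu_{ur}$, the derivative acting \emph{only} on the Poisson factor, not on the full product. Correspondingly, your $Q$-piece comes out with weight $r^{2n-1}(1-r^2)^{2\beta-2}$ and a factor $1/u$, which is not the weight in the statement.

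The paper obtains the stated form by organising the computation differently: it splits $\frac{d}{du}$ into the Poisson piece $\partial_u P_{u^2(1-r^2)}$ and the spherical piece $P_{u^2(1-r^2)}f\ast\frac{d}{du}\mu_{ur}=\tfrac{r}{u}P_{u^2(1-r^2)}f\ast\frac{d}{dr}\mu_{ur}$ from the outset, and integrates by parts only on the second, using the product rule $\frac{d}{dr}(P\ast\mu_{ur})=(\frac{d}{dr}P)\ast\mu_{ur}+P\ast\frac{d}{dr}\mu_{ur}$ to isolate $P\ast\frac{d}{dr}\mu_{ur}$. That is what leaves $(\frac{d}{dr}P)\ast\mu_{ur}$ behind. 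Your route and the paper's are equivalent (substituting $\frac{d}{dr}P=-\frac{2r}{1-r^2}(P-Q)$ into the paper's third term collapses it to yours), but as written your IBP does not land on the formula in the lemma.
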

\begin{proof}
From Lemma \ref{lem:family} we get
\begin{align*}
 \mathcal{B}_u^{\beta}f (z,t)&= 2\frac{\Gamma(2\beta+n)}{\Gamma(2\beta)\Gamma(n)}\int_0^1s^{2n-1}(1-s^2)^{2\beta-1}\frac d{du}P_{u^2(1-s^2)}f\ast \mu_{us}(z,t)\,ds\\
 & +  2\frac{\Gamma(2\beta+n)}{\Gamma(2\beta)\Gamma(n)}\int_0^1s^{2n-1}(1-s^2)^{2\beta-1}P_{u^2(1-s^2)}f\ast \frac d{du}\mu_{us}(z,t)\,ds. 
\end{align*}
Define $$\mathcal{B}_u^{1,\beta}f (z,t) = 2\frac{\Gamma(2\beta+n)}{\Gamma(2\beta)\Gamma(n)}\int_0^1s^{2n-1}(1-s^2)^{2\beta-1}\frac d{du}P_{u^2(1-s^2)}f\ast \mu_{us}(z,t)\,ds. $$ and 
$$\mathcal{B}_u^{2,\beta}f (z,t) = 2\frac{\Gamma(2\beta+n)}{\Gamma(2\beta)\Gamma(n)}\int_0^1s^{2n-1}(1-s^2)^{2\beta-1}P_{u^2(1-s^2)}f\ast \frac d{du}\mu_{us}(z,t)\,ds.$$
Now $ \frac d{du} P_{u^2(1-s^2)} = f \ast \frac d{du} p_{u^2(1-s^2)}$. Hence
$$\frac d{du} P_{u^2(1-s^2)} = f \ast \frac 1u( p_{u^2(1-s^2)} - q_{u^2(1-s^2)}).
$$
Thus
\begin{multline*} 
\mathcal{B}_u^{1,\beta}f (z,t)\\
= 2\frac{\Gamma(2\beta+n)}{\Gamma(2\beta)\Gamma(n)}\int_0^1s^{2n-1}(1-s^2)^{2\beta-1}\frac 1u (P_{u^2(1-s^2)}-Q_{u^2(1-s^2)})f\ast \mu_{us}(z,t)\,ds. 
\end{multline*}
Also we have
\begin{align*}
&\mathcal{B}_u^{2,\beta}f (z,t) = 2\frac{\Gamma(2\beta+n)}{\Gamma(2\beta)\Gamma(n)}\int_0^1s^{2n-1}(1-s^2)^{2\beta-1}P_{u^2(1-s^2)}f\ast \frac{s}{u} \frac d{ds}\mu_{us}(z,t)\,ds\\
& =- 2\frac{\Gamma(2\beta+n)}{u \Gamma(2\beta)\Gamma(n)}\int_0^1 2ns^{2n-1}(1-s^2)^{2\beta-1}P_{u^2(1-s^2)}f\ast \mu_{us}(z,t)\,ds\\ 
&\qquad + 2\frac{\Gamma(2\beta+n)}{u \Gamma(2\beta)\Gamma(n)}\int_0^1 s^{2n}(2\beta-1)(2s)(1-s^2)^{2\beta-2}P_{u^2(1-s^2)}f\ast \mu_{us}(z,t)\,ds\\
&\qquad - 2\frac{\Gamma(2\beta+n)}{u \Gamma(2\beta)\Gamma(n)}\int_0^1 r^{2n}(1-s^2)^{2\beta-1}\frac d{ds}P_{u^2(1-s^2)}f\ast \mu_{us}(z,t)\,ds.
\end{align*}
Adding $\mathcal{B}_{u}^{1,\beta}f $ and  $\mathcal{B}_{u}^{2,\beta}f $, we get the required result.
\end{proof}

We define a new family by
$$
\mathcal{T}_{\beta}f=\mathcal{B}_1^{\beta}(f\ast_3 k_{2\beta})
$$ 
where $k_{\beta}$ is as in \eqref{eq:kbeta}.
For $\beta=0$, we have $\mathcal{T}_{\beta}=\mathcal{B}_1^0=B_1$. Analogously as in Lemma \ref{lem:explicitTb} we can prove the following. 

\begin{lem}
The operator $\mathcal{T}_{\beta}f$ has the explicit expansion
\begin{multline*}
\mathcal{T}_{\beta}f(z,t)=(2\pi)^{-n-1}\int_{-\infty}^{\infty}e^{-i\lambda t}(1-i\lambda)^{-2\beta}\Big(\sum_{k=0}^{\infty}\Big(-\frac{|\lambda| }{2} \psi_k^{2\beta + n-1}(\sqrt{|\lambda|})\\
 - \frac{k}{n}(|\lambda|)\psi_{k-1}^{2\beta +n}(\sqrt{|\lambda|}) \big)f^{\lambda}\ast_{\lambda}\varphi_k^{\lambda}(z)\Big)|\lambda|^n\,d\lambda.
\end{multline*}
\end{lem}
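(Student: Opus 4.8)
The plan is to mimic the proof of Lemma~\ref{lem:explicitTb} almost verbatim, with $\mathcal{A}^{\beta}$ replaced by $\mathcal{B}_1^{\beta}$ and the kernel $k_\beta$ replaced by $k_{2\beta}$. First I would recall that, by definition, $\mathcal{T}_\beta f=\mathcal{B}_1^{\beta}(f\ast_3 k_{2\beta})$, and that $\mathcal{B}_u^{\beta}$ already carries the explicit spectral expansion \eqref{eq:Abeta2}; specialising to $u=1$ (the rescaled form of \eqref{eq: expressionB}, valid for $\Re(2\beta+n-1)>-1$), the operator $\mathcal{B}_1^{\beta}$ acts on the special Hermite side by multiplying the $k$-th component $f^{\lambda}\ast_{\lambda}\varphi_k^{\lambda}(z)$ by $-\frac{|\lambda|}{2}\psi_k^{2\beta+n-1}(\sqrt{|\lambda|})-\frac{k|\lambda|}{n}\psi_{k-1}^{2\beta+n}(\sqrt{|\lambda|})$.

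Next I would compute the effect of the partial convolution $\ast_3 k_{2\beta}$ on the partial inverse Fourier transform in the central variable. Since $f\ast_3 k_{2\beta}$ is a convolution in $t$ only, $(f\ast_3 k_{2\beta})^{\lambda}(z)=\widehat{k_{2\beta}}(\lambda)\,f^{\lambda}(z)$, where the one-dimensional transform $\widehat{k_{2\beta}}(\lambda)=\frac{1}{\Gamma(2\beta)}\int_0^{\infty}e^{i\lambda t}t^{2\beta-1}e^{-t}\,dt=(1-i\lambda)^{-2\beta}$ is exactly the identity established in the proof of Lemma~\ref{lem:explicitTb}, applied with $2\beta$ in place of $\beta$; this is legitimate because $\Re(2\beta)>0$ in the range under consideration, and the holomorphic-continuation argument via the function $F(\beta,z)$ there carries over unchanged. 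Because the $\lambda$-twisted convolution $\ast_{\lambda}$ acts only in the $z$-variable, the scalar factor pulls out: $(f\ast_3 k_{2\beta})^{\lambda}\ast_{\lambda}\varphi_k^{\lambda}(z)=(1-i\lambda)^{-2\beta}\,f^{\lambda}\ast_{\lambda}\varphi_k^{\lambda}(z)$.

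Finally, I would substitute this into the spectral expansion of $\mathcal{B}_1^{\beta}$ from the first step applied to $f\ast_3 k_{2\beta}$, pull $(1-i\lambda)^{-2\beta}$ outside the sum over $k$, and read off the asserted formula. The interchanges of the $\lambda$-integral with the sum over $k$ and with the $t$-convolution are justified as in the lacunary case: first take $f$ in the Schwartz class, where absolute convergence follows from the uniform bounds on $\psi_k^{\delta}$ in Lemmas~\ref{lem:uniform}--\ref{lem:uniform2} together with the decay of the Laguerre functions $\varphi_k^{\lambda}$, and then extend by density.

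I do not expect a genuine obstacle: once the preceding lemmas are in place the statement is bookkeeping. The two points deserving a little care are (i) confirming the rescaled spectral form of $\mathcal{B}_1^{\beta}$ stated in the first step, i.e. that differentiating the Poisson-smoothed spherical-mean representation of $\mathcal{A}_u^{2\beta}$ in $u$ and using $\frac{d}{dr}L_k^{\alpha}(r)=-L_{k-1}^{\alpha+1}(r)$ produces the index $\psi_{k-1}^{2\beta+n}$, consistently with \eqref{eq: expressionB}; and (ii) the analyticity in $\beta$ underlying the Fourier-transform identity $\widehat{k_{2\beta}}(\lambda)=(1-i\lambda)^{-2\beta}$ on the relevant strip. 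Both are routine.
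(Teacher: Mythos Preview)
Your proposal is correct and follows exactly the route the paper indicates: the paper's own proof is the single sentence ``Analogously as in Lemma~\ref{lem:explicitTb} we can prove the following,'' and your write-up simply unpacks that analogy by invoking the spectral expansion \eqref{eq:Abeta2} of $\mathcal{B}_1^{\beta}$ and the Fourier-transform identity $\widehat{k_{2\beta}}(\lambda)=(1-i\lambda)^{-2\beta}$ from the proof of Lemma~\ref{lem:explicitTb}. Your caution in point (i) about the index $k-1$ in $\psi_{k-1}^{2\beta+n}$ is well placed, since the display \eqref{eq:Abeta2} as printed carries $\psi_k^{2\beta+n}$, whereas both \eqref{eq: expressionB} and the lemma statement have $\psi_{k-1}$; the latter is what the derivative formula $\frac{d}{dr}L_k^{\alpha}=-L_{k-1}^{\alpha+1}$ actually gives, so your version is the correct one.
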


The next step is to show  that when $\beta=1+i\gamma$, $\mathcal{T}_{\beta}$ is bounded from $L^p(\H^n)$ into $L^{\infty}(\H^n)$ for any $p>1$, and that for certain negative values of $\beta$, $\mathcal{T}_{\beta}$ is bounded on $L^2(\H^n)$. 
\begin{prop}
\label{prop:LinftyF}
Let $n\ge 1$. For any $\delta>0$, $\gamma\in \R$,
$$
\|\mathcal{T}_{1+i\gamma}f\|_{\infty}\le C_1(\gamma)\|f\|_{1+\delta},
$$
where $C_1(\gamma)$ is of admissible growth.
\end{prop}
\begin{proof}
Let $\psi(t)=te^{-t}\chi_{(0,\infty)}(t)$. For $\beta=1+i\gamma$ it follows that 
\begin{align*}
&|\mathcal{T}_{1+i\gamma}f|  =|\mathcal{B}_1^{1+i\gamma}(f\ast_3 k_{2(1+i\gamma)} )(z,t)|\\
 &\le 2c_{n,\gamma}\frac{|\Gamma(2+2i\gamma+n)|}{\lvert\Gamma(2+2i\gamma)\rvert^2\Gamma(n)}\int_0^1s^{2n-1}P_{(1-s^2)}(f\ast_3 \psi )\ast \mu_{s}(z,t)\,ds\\
&\qquad + 2 \frac{|\Gamma(2+2i\gamma+n)|}{|\Gamma(2+2i\gamma)|^2\Gamma(n)}\int_0^1s^{2n-1}Q_{(1-s^2)}(f\ast_3 \psi)\ast \mu_{s}(z,t)\,ds\\
& \qquad  +  2\frac{|\Gamma(2+2i\gamma+n)|}{|\Gamma(2+2i\gamma)|^2\Gamma(n)}\int_0^1s^{2n}(1-s^2)| \frac{d}{ds}P_{(1-s^2)}(f\ast_3 \psi)|\ast \mu_{s}(z,t)\,ds.
\end{align*}
Let us  denote the right hand side of the above equation as $ I_1 +I_2+I_3$. 
 Since $\psi\ge0$, we get
$$
P_{1-r^2}(f\ast_3 \psi)=\psi\ast_3p_{1-r^2}\ast_3f\le \psi\ast_3M_{\operatorname{HL}}^0 f
$$ 
and 
$$
Q_{1-r^2}(f\ast_3 \psi)=\psi\ast_3q_{1-r^2}\ast_3f\le \psi\ast_3M_{\operatorname{HL}}^0f.
$$
  Thus we have the estimate
$$
|I_1+I_2 |\le C_1(\gamma)\int_0^1 (M_{\operatorname{HL}}^0 f\ast_3 \psi)\ast \mu_{s}(z,t)s^{2n-1}\,ds.
$$
Analogously as in the proof of Proposition \ref{prop:Linfty} we deduce
$$
|I_1+I_2|\le C_1(\gamma)M_{\operatorname{HL}}^0 f\ast K(z,t)
$$
where $K(z,t)=\chi_{|z|\le 1}(z)\psi(t)$. As $M_{\operatorname{HL}}^0f\in L^{1+\delta}(\H^n)$ and $K\in L^q(\H^n)$ for any $q\ge 1$, by H\"older we get
$$
\|I_1+I_2\|_{\infty}\le C_1(\gamma)\|M_{\operatorname{HL}}^0 f\|_{1+\delta}\le C_1(\gamma)\|f\|_{1+\delta}.
$$
Also we have 
$$
| I_3| \le 2\frac{| \Gamma(2+2i\gamma)+n)|}{|\Gamma(2+ 2i\gamma)|^2 \Gamma(n)} \int_0^1s^{2n+1} (P_{(1-s^2)}+Q_{1-s^2})(f\ast_3 \psi)\ast \mu_{s}(z,t)\,ds, 
$$
where we have used that  $ (1-s^2)\frac d{ds}p_{(1-s^2)}= - s p_{(1-s^2)} +  sq_{(1-s^2)}$. Reasoning in the same way as above, we get
$$
|I_3|\le C_1(\gamma)M_{\operatorname{HL}}^0f\ast K(z,t),
$$
and by H\"older 
$$
\|I_3\|_{\infty}\le C_1(\gamma)\|M_{\operatorname{HL}}^0 f\|_{1+\delta}\le C_1(\gamma)\|f\|_{1+\delta}.
$$
Finally $$\|\mathcal{T}_{1+i\gamma}f\|_{\infty} \le \|I_1+I_2\|_{\infty}+\| I_3\|_{\infty} \le C_1(\gamma) \|f\|_{1+\delta} .$$

\end{proof}

With an argument analogous to Proposition \ref{prop:L2}, it can be shown that $\mathcal{T}_{\beta}$ is bounded on $L^2(\H^n)$ for some $\beta<0$. 
\begin{prop}
\label{prop:L2F}
Assume that $n\ge2$ and $\beta\ge -\frac{n}{4}+\frac{5}{12}$. Then for any $\gamma\in \R$,
$$
\|\mathcal{T}_{\beta+i\gamma}f\|_{2}\le C_2(\gamma)\|f\|_{2}.$$
\end{prop}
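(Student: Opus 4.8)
\textbf{Proof proposal for Proposition \ref{prop:L2F}.} The plan is to run the argument of Proposition \ref{prop:L2} verbatim at the level of structure. By the Plancherel theorem for the group Fourier transform, $\|\mathcal{T}_{\beta+i\gamma}f\|_2\le C_2(\gamma)\|f\|_2$ will follow once we bound, uniformly in $k\ge 0$ and $\lambda\neq 0$, the absolute value of the multiplier of $\mathcal{T}_{\beta+i\gamma}$. By the explicit expansion of $\mathcal{T}_\beta$ established just above, this multiplier is a function of the Hermite operator $H(\lambda)$ with eigenvalues
$$
(1-i\lambda)^{-2(\beta+i\gamma)}\Big(-\tfrac{|\lambda|}{2}\,\psi_k^{2\beta+2i\gamma+n-1}(\sqrt{|\lambda|})-\tfrac{k}{n}\,|\lambda|\,\psi_{k-1}^{2\beta+2i\gamma+n}(\sqrt{|\lambda|})\Big).
$$
Since $|(1-i\lambda)^{-2(\beta+i\gamma)}|=(1+\lambda^2)^{-\beta}e^{2\gamma\arg(1-i\lambda)}\le e^{\pi|\gamma|}(1+\lambda^2)^{-\beta}$, and since (exactly as in Proposition \ref{prop:L2}) Corollary \ref{cor:ident} together with the asymptotics $|\Gamma(\mu+iv)|\sim\sqrt{2\pi}|v|^{\mu-1/2}e^{-\pi|v|/2}$ allow us to trade the complex types $2\beta+2i\gamma+n-1$, $2\beta+2i\gamma+n$ for the real types $2\beta-\varepsilon+n-1$, $2\beta-\varepsilon+n$ at the cost of a power of $|\gamma|$ (admissible growth), it suffices to prove, for $\gamma=0$ and all small $\varepsilon>0$,
$$
(1+\lambda^2)^{-\beta}\Big(|\lambda|\,|\psi_k^{2\beta+n-1}(\sqrt{|\lambda|})|+k|\lambda|\,|\psi_{k-1}^{2\beta+n}(\sqrt{|\lambda|})|\Big)\le C
$$
uniformly in $k$ and $\lambda$. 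For $|\lambda|\le 1$ every term is uniformly bounded by Lemma \ref{lem:T}, so assume $|\lambda|\ge 1$, where $(1+\lambda^2)^{-\beta}\sim|\lambda|^{-2\beta}$.

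The first summand is treated exactly as in Proposition \ref{prop:L2}: under our hypothesis $2\beta+n-1\ge -\tfrac13$, so Lemma \ref{lem:uniform} gives $|\lambda|\,|\psi_k^{2\beta+n-1}(\sqrt{|\lambda|})|\lesssim|\lambda|^{5/3-n-2\beta}$, hence after multiplying by $|\lambda|^{-2\beta}$ we get $|\lambda|^{5/3-n-4\beta}$, which is bounded precisely when $\beta\ge\tfrac{5}{12}-\tfrac n4$. The second summand is the new ingredient and the main obstacle, because the extra factor $k|\lambda|$ is not absorbed by a naive application of Lemma \ref{lem:uniform}, which carries no decay in $k$. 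To handle it I would write
$$
\psi_{k-1}^{2\beta+n}(\sqrt{|\lambda|})=2^{2\beta+n}\Big(\tfrac{\Gamma(k)\Gamma(2\beta+n+1)}{\Gamma(k+2\beta+n)}\Big)^{1/2}|\lambda|^{-(2\beta+n)/2}\mathcal{L}_{k-1}^{2\beta+n}\big(\tfrac12|\lambda|\big),
$$
use $\tfrac{\Gamma(k)}{\Gamma(k+2\beta+n)}\le Ck^{-(2\beta+n)}$, and then insert the four regimes of Lemma \ref{lem:T} (with $r=\tfrac12|\lambda|$), splitting according to the size of $k$ versus $|\lambda|$: for $k\lesssim 1$ apply Lemma \ref{lem:uniform} directly; for $k\ll|\lambda|$ one is in the exponentially decaying regime $r\ge\tfrac32(k-1)$, so the factor $e^{-\gamma|\lambda|}$ kills every polynomial in $|\lambda|$; for $k\gg|\lambda|$ one is in the oscillatory regime $|\mathcal{L}_{k-1}^{2\beta+n}(\tfrac12|\lambda|)|\lesssim(k|\lambda|)^{-1/4}$, giving $k|\lambda|\,|\psi_{k-1}^{2\beta+n}(\sqrt{|\lambda|})|\lesssim(k|\lambda|)^{3/4-(2\beta+n)/2}$, which is decreasing in $k$ since $2\beta+n\ge\tfrac n2+\tfrac56>\tfrac32$ and hence is maximised at $k\sim|\lambda|$, where it equals $|\lambda|^{3/2-2\beta-n}$ and contributes the constraint $\beta\ge\tfrac38-\tfrac n4$; and finally the binding middle Hermite regime $k\sim|\lambda|$ with $|(k-1)-\tfrac12|\lambda||\lesssim k^{1/3}$, where $|\mathcal{L}_{k-1}^{2\beta+n}(\tfrac12|\lambda|)|\lesssim k^{-1/4}(k^{1/3})^{-1/4}\sim|\lambda|^{-1/3}$ yields $k|\lambda|\,|\psi_{k-1}^{2\beta+n}(\sqrt{|\lambda|})|\lesssim|\lambda|^{5/3-2\beta-n}$, so that after multiplying by $|\lambda|^{-2\beta}$ we again obtain $|\lambda|^{5/3-n-4\beta}$, bounded exactly when $\beta\ge\tfrac{5}{12}-\tfrac n4$. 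Since all regimes other than the middle one impose strictly weaker conditions, the hypothesis $\beta\ge-\tfrac n4+\tfrac5{12}$ suffices; one may alternatively invoke Lemma \ref{lem:uniform2} for the regime $k\gg|\lambda|$, but the bookkeeping is routine once the regimes are fixed.

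I expect the only genuine difficulty to be this case analysis for the $k|\lambda|\,\psi_{k-1}^{2\beta+n}$ term: in particular, checking that $2\beta+n>\tfrac32$ throughout the admissible range of $\beta$ (so the oscillatory contribution decreases in $k$) and that the middle Hermite regime is what produces the sharp exponent $\tfrac53-n-4\beta$, matching the constraint coming from the first summand. The restriction $n\ge 2$ enters precisely as in Proposition \ref{prop:continuity} and Remark \ref{rem:restrn1}: it is what makes the threshold $-\tfrac n4+\tfrac5{12}$ negative, so that Stein interpolation of $\mathcal{T}_{\beta+i\gamma}$ between this $L^2$ estimate and the $L^{1+\delta}\to L^\infty$ bound of Proposition \ref{prop:LinftyF} produces a nontrivial $L^p$-improving range for $B_1=\tfrac{d}{dr}A_r|_{r=1}$, and it is what keeps $2\beta+n$ safely above $\tfrac32$.
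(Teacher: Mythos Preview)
Your proposal is correct and is precisely the ``argument analogous to Proposition \ref{prop:L2}'' that the paper invokes without details: reduce via Plancherel to a uniform bound on the multiplier symbol, strip off the imaginary part of the Laguerre type using Corollary \ref{cor:ident} and the Gamma asymptotics, and then feed in the four regimes of Lemma \ref{lem:T}. Your identification of the turning-point regime $k\sim|\lambda|$ as the one producing the sharp threshold $\beta\ge -\tfrac{n}{4}+\tfrac{5}{12}$ (matching the first summand) is exactly right, and the check that $2\beta+n>\tfrac32$ for $n\ge 2$ so that the oscillatory contribution $(k|\lambda|)^{3/4-(2\beta+n)/2}$ is decreasing in $k$ is the correct bookkeeping.

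One cosmetic point: the sentence ``For $|\lambda|\le 1$ every term is uniformly bounded by Lemma \ref{lem:T}'' is too quick for the second summand $k|\lambda|\,|\psi_{k-1}^{2\beta+n}(\sqrt{|\lambda|})|$, since the prefactor $k$ is unbounded. But your own regime split already covers this: if $(k-1)|\lambda|\lesssim 1$ then $k|\lambda|$ is bounded and $|\psi_{k-1}^{2\beta+n}|\le C$; otherwise one is in the oscillatory regime and $(k|\lambda|)^{3/4-(2\beta+n)/2}\le C$ since the exponent is negative and $k|\lambda|\gtrsim 1$. So nothing is missing, only the phrasing should be tightened.
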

\begin{proof}
We have to check that 
$$
 (1+\lambda^2)^{-\beta}\Big|\frac{|\lambda|}{2}\psi_{k}^{2\beta+i\gamma+n-1}(\sqrt{|\lambda|})+\frac{k|\lambda|}{n}\psi_{k-1}^{2\beta+i\gamma+n}(\sqrt{|\lambda|})\Big|\le C_2(\gamma)
$$
where $C_2(\gamma)$ is independent of $k$ and $\lambda$. When $\gamma=0$, it follows from the estimates of Lemma~\ref{lem:uniform} (with $\alpha=0,1$) that
\begin{align*}
 &(1+\lambda^2)^{-\beta}\Big|\frac{|\lambda|}{2}\psi_{k}^{2\beta+n-1}(\sqrt{|\lambda|})+\frac{k|\lambda|}{n}\psi_{k-1}^{2\beta+n}(\sqrt{|\lambda|})\Big|\\
 &\le |\lambda|^{-2\beta}\big(|\lambda|^{-2\beta-(n-1)+\frac23}+|\lambda|^{-2\beta-n+\frac53}\big)\\
 &\le C|\lambda|^{-4\beta-n+\frac53},
\end{align*}
for $|\lambda|\ge 1$, which is bounded for $\beta\ge -\frac{n}{4}+\frac{5}{12}$. For $\gamma\neq 0$ we can express $\psi_{k}^{2\beta+i\gamma+n-1}(\sqrt{|\lambda|})$ in terms of $\psi_{k}^{2\beta-\varepsilon+n-1}(\sqrt{|\lambda|})$ for small enough $\varepsilon>0$ and obtain the same estimate. The proof is complete.
\end{proof}

Let us consider the following holomorphic function $\alpha(z)$ on the strip $\{z:0\le \Re z\le 1\}$, given by  $\alpha(z)=\big(\frac{n}{4}-\frac 5{12}-\varepsilon \big)(z-1)+z$ for a small $\varepsilon>0$. We have $\alpha(0)=-\frac{n}{4}+\frac 5{12}+\varepsilon$ and $\alpha(1)=1$. Then, $\mathcal{T}_{\alpha(z)}$ is an analytic family of linear operators. In view of Propositions~\ref{prop:LinftyF} and \ref{prop:L2F}, we can apply Stein's interpolation theorem. Letting $z=u+iv$, we have
$$
\alpha(z)=0 \Longleftrightarrow \Big(\frac{n}{4}-\frac 5{12}-\varepsilon\Big)(u-1)+u=0 \Longleftrightarrow u=\frac{3n-5-12\varepsilon}{3n+7+\varepsilon}.
$$
Since $\varepsilon>0$ is arbitrary,
we obtain
$$
\mathcal{T}_{\alpha(u)}:L^{p_u}(\H^n)\to L^{q_u}(\H^n)
$$
where 
$$
\frac{6}{3n+7-12\varepsilon}<\frac{1}{p_u}<\frac{3n+1-12\varepsilon}{3n+7-12\varepsilon},\quad\frac{1}{q_u}=\frac{6}{3n+7-12\varepsilon}.
$$ 
This leads to the following result.

\begin{thm}
\label{prop:L2en}
Assume that $n\ge 2$ and $\varepsilon>0$. Then $ B_1 :L^{p}(\H^n)\to L^{q}(\H^n)$ for any $p,q$ such that 
$$
\frac{6}{3n+7-12\varepsilon}<\frac{1}{p}<\frac{3n+1-12\varepsilon}{3n+7-12\varepsilon},\qquad\frac{1}{q}=\frac{6}{3n+7-12\varepsilon}.
$$ 
\end{thm}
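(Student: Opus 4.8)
The plan is to follow exactly the scheme used for Theorem~\ref{thm:Lp}, but with the family $\mathcal{T}_\beta$ in place of $T_\beta$ and $B_1$ in place of $A_1$. Recall that $\mathcal{T}_0 = \mathcal{B}_1^0 = B_1$, so $B_1$ sits inside the analytic family $\{\mathcal{T}_\beta : \Re(2\beta + n - 1) > -1\}$. The two endpoint estimates are already in hand: Proposition~\ref{prop:LinftyF} gives that $\mathcal{T}_{1+i\gamma}$ is bounded from $L^{1+\delta}(\H^n)$ to $L^\infty(\H^n)$, for every $\delta>0$, with operator norm of admissible growth in $\gamma$; and Proposition~\ref{prop:L2F} gives that $\mathcal{T}_{\beta+i\gamma}$ is bounded on $L^2(\H^n)$, with admissible growth in $\gamma$, whenever $\Re\beta \ge -\tfrac{n}{4} + \tfrac{5}{12}$.

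Next I would fix $\varepsilon>0$ and take the affine holomorphic function $\alpha(z) = \big(\tfrac{n}{4} - \tfrac{5}{12} - \varepsilon\big)(z-1) + z$ on the strip $\{0\le \Re z\le 1\}$, which satisfies $\alpha(1)=1$ and $\Re\alpha(iv) = -\tfrac{n}{4} + \tfrac{5}{12} + \varepsilon > -\tfrac{n}{4} + \tfrac{5}{12}$ for all $v\in\R$. Thus $\{\mathcal{T}_{\alpha(z)}\}$ is an admissible analytic family which is $L^2$-bounded on the edge $\Re z = 0$ (by Proposition~\ref{prop:L2F}) and $L^{1+\delta}\to L^\infty$ bounded on the edge $\Re z = 1$ (by Proposition~\ref{prop:LinftyF}), so Stein's analytic interpolation theorem applies. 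It produces, for each real $u\in(0,1)$, the bound $\mathcal{T}_{\alpha(u)}:L^{p_u}(\H^n)\to L^{q_u}(\H^n)$ with $\tfrac{1}{p_u} = \tfrac{1-u}{2} + \tfrac{u}{1+\delta}$ and $\tfrac{1}{q_u} = \tfrac{1-u}{2}$.

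To finish, I would evaluate at the unique real zero $u_0 = \dfrac{3n-5-12\varepsilon}{3n+7-12\varepsilon} \in (0,1)$ of $\alpha$, where $\mathcal{T}_{\alpha(u_0)} = \mathcal{T}_0 = B_1$. This gives $\tfrac{1}{q} = \tfrac{1}{q_{u_0}} = \tfrac{6}{3n+7-12\varepsilon}$, while letting $\delta$ run over $(0,\infty)$ makes $\tfrac{1}{p_{u_0}} = \tfrac{1-u_0}{2} + \tfrac{u_0}{1+\delta}$ sweep the open interval $\big(\tfrac{6}{3n+7-12\varepsilon},\ \tfrac{3n+1-12\varepsilon}{3n+7-12\varepsilon}\big)$, which is precisely the claimed range. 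The genuinely substantive analytic work all lies in Propositions~\ref{prop:LinftyF} and~\ref{prop:L2F}; for the theorem itself the only things to check are that the family $\{\mathcal{T}_{\alpha(z)}\}$ meets the admissible-growth hypothesis of Stein's theorem — which is inherited from the admissible growth in those two propositions, itself obtained via the asymptotics $|\Gamma(\mu+iv)|\sim\sqrt{2\pi}\,|v|^{\mu-1/2}e^{-\pi|v|/2}$ exactly as in the proof of Proposition~\ref{prop:L2} — and the routine bookkeeping of the interpolation exponents. That bookkeeping, together with the correct normalisation of $\alpha$ so that its real zero lands at the right point, is the only (mild) obstacle; no new estimate beyond those already established is required.
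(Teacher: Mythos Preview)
Your proposal is correct and follows essentially the same approach as the paper: the same affine function $\alpha(z)=\big(\tfrac{n}{4}-\tfrac{5}{12}-\varepsilon\big)(z-1)+z$, the same endpoint inputs (Propositions~\ref{prop:LinftyF} and~\ref{prop:L2F}), and Stein's analytic interpolation applied to $\mathcal{T}_{\alpha(z)}$, evaluated at the real zero $u_0=\tfrac{3n-5-12\varepsilon}{3n+7-12\varepsilon}$. Your bookkeeping of the exponents $\tfrac{1}{q_{u_0}}=\tfrac{1-u_0}{2}=\tfrac{6}{3n+7-12\varepsilon}$ and the range of $\tfrac{1}{p_{u_0}}$ as $\delta$ varies is exactly what the paper records.
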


A version of the inequality in Theorem \ref{prop:L2en} when $ B_1 $ is replaced by $B_r$ can be accomplished easily. First we have the scaling lemma below, we omit the details. 
\begin{lem}
\label{lem:dilatF}
For any  $r> 0$  we have $B_rf=\frac 1r \delta_r^{-1}B_1\delta_rf$.
\end{lem}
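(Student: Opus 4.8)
The plan is to repeat the computation in the proof of Lemma~\ref{lem:dilat} essentially verbatim, but starting from the spectral representation \eqref{eq: expressionB} of $B_r f=\frac{d}{dr}A_r f$ in place of \eqref{eq:expression}. First I would perform the change of variables $\lambda\mapsto\lambda/r^2$ in the $\lambda$-integral defining $B_r f(z,t)$. As in that proof, $e^{-i\lambda t}$ becomes $e^{-i(\lambda/r^2)t}$, the measure $|\lambda|^n\,d\lambda$ produces a factor $r^{-2n-2}$, and every Laguerre argument $\sqrt{|\lambda|}\,r$ becomes $\sqrt{|\lambda|}$; the one new feature is that the prefactors $\frac{|\lambda|r}{2}$ and $\frac{k|\lambda|r}{n}$ occurring in \eqref{eq: expressionB} turn into $\frac{|\lambda|}{2r}$ and $\frac{k|\lambda|}{nr}$. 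This single extra power $r^{-1}$, absent in the computation for $A_r$, is exactly the origin of the factor $\frac1r$ in the statement.

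Then I would insert the identity $f^{\lambda/r^2}\ast_{\lambda/r^2}\varphi_k^{\lambda/r^2}(z)=r^{2+2n}(\delta_r f\ast_\lambda\varphi_k^\lambda)(z/r)$ already established inside the proof of Lemma~\ref{lem:dilat}, and collect the powers of $r$, which combine to $r^{-2n-2}\cdot r^{2+2n}\cdot r^{-1}=r^{-1}$. What remains is precisely $\frac1r$ times the integral defining $B_1(\delta_r f)$ evaluated at $(z/r,t/r^2)$, that is,
$$B_r f(z,t)=\frac1r\,(B_1\delta_r f)\!\left(\frac{z}{r},\frac{t}{r^2}\right)=\frac1r\,\delta_r^{-1}B_1\delta_r f(z,t).$$

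An alternative that avoids redoing this computation is to differentiate the scaling identity of Lemma~\ref{lem:dilat}: since $\delta_r\delta_s=\delta_{rs}$, that lemma yields the dilation covariance $A_{rs}f=\delta_s^{-1}(A_r(\delta_s f))$, and differentiating in $r$ (using the chain rule on the left, and that $\delta_s^{-1}$ and $\delta_s f$ do not depend on $r$) gives $s\,B_{rs}f=\delta_s^{-1}(B_r(\delta_s f))$; specializing to $r=1$ recovers the claim. Either way there is no real obstacle here; the only point deserving a word of justification is the legitimacy of differentiating \eqref{eq:expression} term by term in $r$ and of exchanging $\frac{d}{dr}$ with the $\lambda$-integral, which is routine for Schwartz data by the uniform Laguerre bounds of Subsection~\ref{subsec:spectralest} and then passes to general $f$ by density.
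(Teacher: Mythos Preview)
Your proposal is correct. The paper actually omits the proof of this lemma entirely (it only says ``we omit the details''), so there is no proof to compare against; your first approach---rerunning the computation of Lemma~\ref{lem:dilat} on the spectral formula \eqref{eq: expressionB} and tracking the extra factor $r^{-1}$ coming from the prefactors $|\lambda|r$---is precisely the argument the paper is gesturing at, and your alternative of differentiating the identity $A_{rs}f=\delta_s^{-1}A_r\delta_s f$ in $r$ is an equally valid (and arguably cleaner) route.
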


From Theorem \ref{prop:L2en} and Lemma \ref{lem:dilatF} we can prove
\begin{cor}
\label{cor:scaledBR }
Assume that $n\ge 2$. Then $ \| B_rf \|_{\frac {3n+7}6} \le \frac cr \, r^{(2n+2)(\frac 6{3n+7} -\frac 1p)} \| f \|_{p} $ for any $\frac {3n+7}{3n+1} < p <\frac {3n+7}{6} $.
\end{cor}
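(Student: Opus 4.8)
The plan is to deduce the estimate for $B_r$ from the $L^p$-improving bound for $B_1$ in Theorem~\ref{prop:L2en} by a homogeneity argument, in complete parallel with the way Corollary~\ref{cor:dilat2} was obtained from Corollary~\ref{cor:A1} in the lacunary case; the only new feature is the extra factor $\tfrac1r$ supplied by the scaling identity of Lemma~\ref{lem:dilatF}.

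First I would record how $L^p$ norms transform under the non-isotropic dilation $\delta_r\varphi(w,t)=\varphi(rw,r^2t)$: since the Haar measure $dz\,dt$ on $\H^n$ is homogeneous of degree $2n+2$, one has $\|\delta_r f\|_p=r^{-(2n+2)/p}\|f\|_p$, and hence $\|\delta_r^{-1}g\|_q=r^{(2n+2)/q}\|g\|_q$, for all $1\le p,q<\infty$ and $r>0$. Then, combining Lemma~\ref{lem:dilatF} with Theorem~\ref{prop:L2en} applied to $\delta_r f$ (for $q=\tfrac{n+2}{2}$ and $p$ in the indicated range), one computes
$$
\|B_r f\|_q=\frac1r\,\big\|\delta_r^{-1}B_1\delta_r f\big\|_q=\frac1r\,r^{(2n+2)/q}\,\|B_1\delta_r f\|_q\le\frac{C}{r}\,r^{(2n+2)/q}\,\|\delta_r f\|_p=\frac{C}{r}\,r^{(2n+2)(\frac1q-\frac1p)}\,\|f\|_p,
$$
which is the asserted inequality for the relevant pair $(\tfrac1p,\tfrac1q)$.

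I do not expect a real obstacle here beyond the work already invested in Theorem~\ref{prop:L2en}: the corollary is a routine rescaling, and the argument is verbatim that of Corollary~\ref{cor:dilat2} with the insertion of the factor $\tfrac1r$. The one point requiring a little care is that the endpoint exponent $\tfrac1q=\tfrac{6}{3n+7}$ is reached by Theorem~\ref{prop:L2en} only in the open sense — the loss $\varepsilon>0$, inherited from the $L^2\!\to\!L^2$ bound of Proposition~\ref{prop:L2F}, is genuine — so one fixes $q$ just below $\tfrac{3n+7}{6}$ and keeps the corresponding open interval $\tfrac{3n+7}{3n+1}<p<\tfrac{3n+7}{6}$ of admissible $p$; everything else is elementary.
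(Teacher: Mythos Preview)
Your argument is correct and is exactly the approach the paper has in mind: it merely states ``From Theorem~\ref{prop:L2en} and Lemma~\ref{lem:dilatF} we can prove'' and leaves the details to the reader, and your dilation computation fills these in precisely, in parallel with Corollary~\ref{cor:dilat2}. Your remark on the $\varepsilon$-loss at the endpoint is also apt.
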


We are now in position to prove Theorem \ref{thm:LPQ}.
\begin{proof}[Proof of Theorem \ref{thm:LPQ}] Let us denote by $\mathbf{F}_n$ the triangle with vertices $(0,1)$, $\big(\frac{2n-1}{2n},\frac{1}{2n} \big)$ and $\big(\frac{3n+1}{3n+7},\frac{3n+1}{3n+7}\big)$, and its dual  $\mathbf{F}'_n$, the triangle with vertices  $(0,0)$, $\big(\frac{2n-1}{2n},\frac{2n-1}{2n} \big)$ and $\big(\frac{3n+1}{3n+7},\frac{6}{3n+7}\big)$.

\begin{figure}[t]
\includegraphics[scale=1]{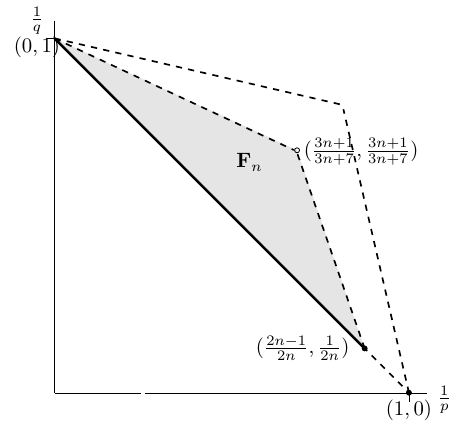}
\includegraphics[scale=1]{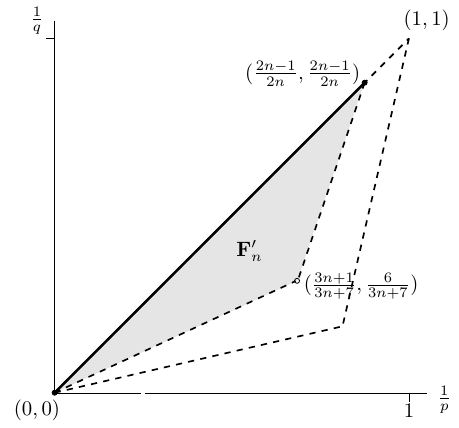}
\caption{Triangle $\mathbf{F}_n'$ shows the region for $L^p-L^q$ estimates for $M_{\delta}$. The dual triangle $\mathbf{F}_n$ is on the left. The outer (dashed) triangles correspond to the  lacunary triangles $\mathbf{S}_n'$ (right) and $\mathbf{S}_n$ (left).}
\label{pic:full}
\end{figure}

The triangle $\mathbf{F}'_n$ is contained in the triangle $\mathbf{S}_n'$, see Figure \ref{pic:full}.  Hence $ \|A_1f  \|_q \le c \| f \|_p $ for all $(\frac 1p, \frac 1q)\in \mathbf{F}'_n $. Using this and Corollary~\ref{cor:scaledBR } we get, in view of \eqref{eq:FTC}, the estimate $\|M_{\delta}f \|_{\frac {3n+7}6} \le     c \| f \|_p $ for $\frac {3n+7}{3n+1} < p <\frac {3n+7}6$. On the other hand,  we also have $ \|M_{\delta}f \|_p \le c\|f \|_p $ for all $p> \frac {2n}{2n-1}$, by \cite{MS} or  \cite{NaT}. After applying Marcinkiewicz interpolation theorem we get the required result.
\end{proof}

\begin{rem}
Actually, a scaling argument allows to state a result for the local maximal function taken over $ \delta^{k+1} \leq t \leq \delta^k$, for any $k\in \Z$.
\end{rem}

\subsection{The continuity property of a local variant of the maximal function}
\label{sec:contF}

We start with the following proposition. 
\begin{prop}
\label{prop:mod}
For all  $  (\tfrac1{p}, \tfrac 1{q}) $  in the interior of the triangle $\mathbf{F}_n'$, there exists $ \nu >0$ so that for all $ 0< \epsilon < \frac 12 $, we have 
\begin{equation}
\bigl\| \sup _{ \substack{ s,r\in [1 ,\delta^{-1}]\\  |  s-r|< \epsilon  }}
 |  A _{r} f - A_s f |  \big\|_ {q} \lesssim \epsilon ^{\nu  }   \| f\| _{p}.   
\end{equation}
\end{prop}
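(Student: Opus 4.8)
The plan is to estimate the supremum by the radial derivative $B_r=\tfrac{d}{dr}A_r$ studied in Subsection~\ref{sec:Lpfull}, extracting a power of $\epsilon$ from Hölder's inequality in the variable $r$, and then to interpolate the resulting estimate with order-one bounds in order to cover the whole open triangle $\mathbf{F}_n'$. Throughout I would first take $f$ smooth with compact support, so that $r\mapsto A_rf(z,t)$ is $C^1$ on $[1,\delta^{-1}]$; then the supremum over $s,t$ equals a supremum over rationals and is measurable, and the general case $f\in L^p$ follows by density at the end.

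First I would fix an exponent $m$ with $1<m\le q$ (note $q>1$ on $\mathbf{F}_n'$) and write, for $s<t$ in $[1,\delta^{-1}]$, $A_tf(x)-A_sf(x)=\int_s^t B_rf(x)\,dr$. Hölder's inequality in $r$ gives $|A_tf(x)-A_sf(x)|\le|t-s|^{1-1/m}\big(\int_1^{\delta^{-1}}|B_rf(x)|^m\,dr\big)^{1/m}$, hence
$$
\sup_{\substack{s,t\in[1,\delta^{-1}]\\ |s-t|<\epsilon}}|A_tf(x)-A_sf(x)|\ \le\ \epsilon^{1-1/m}\Big(\int_1^{\delta^{-1}}|B_rf(x)|^m\,dr\Big)^{1/m}.
$$
Taking $L^q$ norms and using Minkowski's integral inequality (legitimate since $q/m\ge1$) bounds the right side by $\epsilon^{1-1/m}\big(\int_1^{\delta^{-1}}\|B_rf\|_q^m\,dr\big)^{1/m}$. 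Now Theorem~\ref{prop:L2en} and Proposition~\ref{prop:L2F}, together with the scaling identity $B_rf=\tfrac1r\delta_r^{-1}B_1\delta_rf$ (Lemma~\ref{lem:dilatF}) and Riesz--Thorin interpolation, give $\|B_rf\|_q\le C(\delta)\|f\|_p$ uniformly for $r\in[1,\delta^{-1}]$ whenever $\big(\tfrac1p,\tfrac1q\big)$ lies in a subtriangle $R_n\subset\mathbf{F}_n'$ having the vertex $\big(\tfrac{3n+1}{3n+7},\tfrac{6}{3n+7}\big)$ on its closure and a neighbourhood of $\big(\tfrac12,\tfrac12\big)$ in its interior (the scaling factor $r^{-1}r^{(2n+2)(\frac6{3n+7}-\frac1p)}$ being bounded on the compact interval). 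This already proves the proposition for $\big(\tfrac1p,\tfrac1q\big)\in R_n$, with $\eta=1-1/m>0$.

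To reach the full interior of $\mathbf{F}_n'$, I would linearise the supremum by a measurable selection $x\mapsto(s(x),t(x))$, so that with the selection frozen $f\mapsto\big(x\mapsto A_{t(x)}f(x)-A_{s(x)}f(x)\big)$ is linear and dominated pointwise by the supremum, and interpolate (Riesz--Thorin, with bounds uniform in the selection) the estimate just obtained on $R_n$ against the order-one bounds on the diagonal edge of $\mathbf{F}_n'$: $\big\|\sup_{s,t}|A_tf-A_sf|\big\|_p\le2\|M_{\operatorname{full}}f\|_p\lesssim\|f\|_p$, valid for $\tfrac{2n}{2n-1}<p\le\infty$ by \cite{MS,NaT} (and the trivial $L^\infty$ bound). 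Since $\mathbf{F}_n'$ is the convex hull of this diagonal edge and the vertex $\big(\tfrac{3n+1}{3n+7},\tfrac{6}{3n+7}\big)$, and $R_n$ can be taken arbitrarily close to $\mathbf{F}_n'$ near that vertex (shrinking the parameter $\varepsilon$ in Theorem~\ref{prop:L2en}), every point of the open triangle is a convex combination of a diagonal point and an $R_n$-point with positive weight on the latter; Riesz--Thorin then yields $\big\|\sup_{s,t}|A_tf-A_sf|\big\|_q\lesssim\epsilon^{\eta}\|f\|_p$ for some $\eta>0$ depending only on $(p,q)$, and density concludes.

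The step I expect to be most delicate is the last one: pinning down the exact subtriangle $R_n$ on which $B_r$ enjoys uniform $L^p$--$L^q$ bounds, and verifying that interpolation with the diagonal estimates genuinely produces a strictly positive power of $\epsilon$ at every interior point of $\mathbf{F}_n'$ (one must avoid writing such a point as a convex combination of order-one points only). The measurability of the supremum, the linearisation, and the density reduction are routine and I would only indicate them.
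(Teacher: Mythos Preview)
Your proposal is correct, and the underlying mechanism---extracting an $\epsilon$-gain from the radial derivative $B_r$ and then interpolating with order-one bounds---is the same as the paper's. The execution, however, is organised differently.

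The paper argues more economically on both ends of the interpolation. For the $\epsilon$-gain, it works only at the single point $(\tfrac12,\tfrac12)$: applying the one-dimensional Sobolev embedding $H^1([1,\delta^{-1}])\hookrightarrow C^{1/2}$ to $t\mapsto A_tf(x)$ yields $|A_tf-A_sf|\le c|t-s|^{1/2}\|B_\cdot f(x)\|_{L^2_t}$, and then $\|B_tf\|_2\lesssim\|f\|_2$ (from Proposition~\ref{prop:L2F} at $\beta=0$ plus scaling) gives $\epsilon^{1/2}\|f\|_2$. This is exactly your Hölder-in-$r$ step specialised to $m=2$ and $p=q=2$, so your first step contains the paper's. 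For the order-one bound, the paper invokes Corollary~\ref{cor:LPsF}, which already gives $\|M_\delta f\|_{q_0}\lesssim\|f\|_{p_0}$ throughout the \emph{entire} interior of $\mathbf{F}_n'$, not merely on the diagonal. With that in hand the interpolation geometry is trivial: every interior point lies on a segment from $(\tfrac12,\tfrac12)$ to another interior point of $\mathbf{F}_n'$, and the weight on $(\tfrac12,\tfrac12)$ is positive.

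Your route---establishing the $\epsilon^{1-1/m}$ bound on a subtriangle $R_n$ via the $L^p$--$L^q$ mapping of $B_r$, and interpolating only against the diagonal bounds from \cite{MS,NaT}---works, but it is the source of the geometric ``most delicate'' step you flag at the end. That difficulty evaporates if you replace your diagonal order-one input by Corollary~\ref{cor:LPsF} (equivalently Theorem~\ref{thm:LPQ}), which is already available at this point in the paper: then you only need the $\epsilon$-gain at a single point, and any interior point of $R_n$ (in particular $(\tfrac12,\tfrac12)$) will do. What your more general FTC+H\"older argument buys is a larger initial region with an $\epsilon$-gain and therefore potentially better control of the exponent $\eta(p,q)$; what the paper's version buys is brevity.
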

\begin{proof}
It suffices to prove a version of statement at the point $ (\frac12 ,\frac12)$, and then interpolate to the other points in the interior of $\mathbf{F}_n'$. First we have, by Fundamental Theorem of Calculus and H\"older inequality, 
$$ | A_{r} f - A_{s} f | \leq c | r-s | ^{1/2} \Big\|\frac{d}{dr} A_{r}f  \Big\|_{L^{2}(\R)},
$$ 
This gives us
$$
\bigl\|\sup _{ \substack{ s,r\in [1 ,\delta^{-1}]\\ |  s-r|< \epsilon  }}
 | A _{r} f - A_s f |\big\|_2 \leq c \epsilon^{\frac{1}{2}} \|  \partial_{r} A_{r} f \| _{L ^2 (\mathbb H ^{n} \times [1 ,\delta^{-1}])} \lesssim \epsilon^{\frac{1}{2}} \| f\|_2. 
 $$
Moreover, from Theorem  \ref{thm:LPQ},
 $$\| \sup_{r\in [1 ,\delta^{-1}]}  A_{r} f \|_{q_0}   \leq  \| f \|_{p_0} , \quad \text{ for } \quad \Big(\frac 1{p_0}, \frac 1{q_0}\Big)\in  \mathbf{F}_n' $$
 which gives  
\begin{equation}
\bigl\| \sup _{ \substack{ s,r\in [1 ,\delta^{-1}]\\  | s-r|< \epsilon  }}
 |  A _{r} f - A_s f |  \big\|_ {q_0} \lesssim    \|f\| _{p_0},\quad  \text{ for } \quad \Big(\frac 1{p_0}, \frac 1{q_0}\Big)\in  \mathbf{F}_n'.
\end{equation}
 Then the proposition follows immediately by using interpolation. 
\end{proof} 

\begin{thm}
\label{thm:cont}
For all $ (\frac1p, \frac1q) $ in the interior of $\mathbf{F}_n'$, we have for some  $\nu = \nu (n,p,q)>0$,
$$
\|  \sup _{1 \leq r \leq \delta^{-1}} |  A_{r} f -  A_{r} \tau _y f  | 
\|_q 
\lesssim | {y}| ^{\nu}   \| f\|_p, \qquad |  y|<1 .
$$
\end{thm}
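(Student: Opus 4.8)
The plan is to discretise the dilation parameter and reduce the statement to the two continuity estimates already at our disposal: the $t$-continuity of the spherical means, Proposition \ref{prop:mod}, and the scaled spatial continuity of a single spherical mean, Corollary \ref{cor:dilat2}. First I would fix $(\tfrac1p,\tfrac1q)$ in the interior of $\mathbf{F}_n'$ and record the facts I will use. Since $\mathbf{F}_n'\subseteq\mathbf{S}_n'$ and, as one sees from Figure \ref{pic:full}, the interior of $\mathbf{F}_n'$ lies in the interior of $\mathbf{S}_n'$, Corollary \ref{cor:dilat2} applies at $(\tfrac1p,\tfrac1q)$; moreover there $q<\infty$ and $(2n+2)(\tfrac1q-\tfrac1p)<0$. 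Write $\eta_1>0$ for the exponent furnished by Proposition \ref{prop:mod} at this point and $\eta_2>0$ for the one in Corollary \ref{cor:dilat2}.

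For a parameter $0<\epsilon<\tfrac12$ to be chosen, I would pick a net $1=t_1<\dots<t_M=\delta^{-1}$ with consecutive gaps at most $\epsilon$, so $M\lesssim\epsilon^{-1}$, and for $t\in[1,\delta^{-1}]$ let $t_{j(t)}$ be a nearest net point. The elementary splitting
$$
A_tf-A_t\tau_yf=\bigl(A_tf-A_{t_{j(t)}}f\bigr)-\bigl(A_t\tau_yf-A_{t_{j(t)}}\tau_yf\bigr)+\bigl(A_{t_{j(t)}}f-A_{t_{j(t)}}\tau_yf\bigr),
$$
followed by taking $\sup_{1\le t\le\delta^{-1}}$, then the $L^q$ norm and the triangle inequality, reduces matters to two contributions. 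The first two terms are controlled by Proposition \ref{prop:mod}, applied once to $f$ and once to $\tau_yf$ (using $\|\tau_yf\|_p=\|f\|_p$, since $\H^n$ is unimodular), and contribute $\lesssim\epsilon^{\eta_1}\|f\|_p$. For the third term I would bound $\max_{1\le j\le M}|A_{t_j}f-A_{t_j}\tau_yf|$ by the $\ell^q$-sum $\bigl(\sum_j\|A_{t_j}f-A_{t_j}\tau_yf\|_q^q\bigr)^{1/q}$, legitimate since $q<\infty$; here Corollary \ref{cor:dilat2}, together with $1\le t_j\le\delta^{-1}$, $|y|<1$ and the negativity of the dilation exponent, gives $\|A_{t_j}f-A_{t_j}\tau_yf\|_q\lesssim|y|^{\eta_2}\|f\|_p$ uniformly in $j$, so this contribution is $\lesssim M^{1/q}|y|^{\eta_2}\|f\|_p\lesssim\epsilon^{-1/q}|y|^{\eta_2}\|f\|_p$.

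Putting the pieces together, $\|\sup_{1\le t\le\delta^{-1}}|A_tf-A_t\tau_yf|\|_q\lesssim(\epsilon^{\eta_1}+\epsilon^{-1/q}|y|^{\eta_2})\|f\|_p$, and I would finish by optimising: choosing $\epsilon=|y|^{\theta}$ with $\theta=\min\{\tfrac12,\tfrac12\eta_2q\}$ yields the desired bound with $\eta=\min\{\theta\eta_1,\eta_2-\theta/q\}>0$. A brief measurability remark — restricting the supremum to a countable dense subset of $[1,\delta^{-1}]$ using continuity of $t\mapsto A_tf$ for the bounded, compactly supported $f$ we work with — disposes of the one technical loose end.

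I do not expect a real obstacle: the two substantive inputs, Proposition \ref{prop:mod} and Corollary \ref{cor:dilat2}, are already established, so what is left is bookkeeping. The single point to watch is that discretising at scale $\epsilon$ costs a factor $M^{1/q}\sim\epsilon^{-1/q}$, which has to be absorbed into the gain $|y|^{\eta_2}$ from the spatial continuity estimate; this is exactly why $\epsilon$ must be taken to be a sufficiently small positive power of $|y|$, and the argument closes only because $\eta_2>0$. Finally, the same reasoning combined with the scaling identity in Lemma \ref{lem: scal} yields the continuity property over an arbitrary interval $[\delta^{k+1},\delta^k]$, $k\in\Z$, with constants uniform in $k$.
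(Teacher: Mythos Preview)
Your proof is correct and follows essentially the same strategy as the paper: discretise $[1,\delta^{-1}]$ by a net, split via the three-term triangle inequality, control the $t$-continuity pieces by Proposition~\ref{prop:mod} and the net-point piece by an $\ell^q$-sum together with Corollary~\ref{cor:dilat2}, then balance the net spacing against $|y|$. The only cosmetic difference is that the paper fixes the net to be a $|y|^{\eta}$-net from the outset, whereas you keep $\epsilon$ free and optimise at the end; the substance is identical.
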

\begin{proof}
For $ (\frac 1{p}, \frac1{q}) $ in the interior of the triangle $\mathbf{S}_n'$  we have, by Corollary \ref{cor:dilat2},
\begin{equation}\label{e:littman}
\| A_{r}  -  A_{r} \tau _y \|_ {L ^{p} \mapsto L ^{q}}
\lesssim |y | ^{\nu} , \qquad  |  y| < 1   , 
\end{equation}
for a choice of $ \nu = \nu (n, q,s)>0$. 
The triangle $\mathbf{F}_n'$ is contained in the triangle $\mathbf{S}_n'$. 
Thus, if $ \mathcal T \subset [1,\delta^{-1}]$ is a finite set, it follows from the previous statement that  
\begin{equation} \label{eq: net}
\| \sup _{r\in \mathcal T} | A _{r} f -  A _{r}\tau _y f |\| _ {p_2}
\lesssim  \sharp (\mathcal T) ^{1/p_2}  \cdot |  y | ^{\nu }  \| f\| _{p_1} , \qquad \Big(\frac1{p_1},\frac 1{p_2} \Big) \in \mathbf{F}_n'. 
\end{equation}
Take $ \mathcal T $ to be a $ | y |^{\nu} $-net 
in $ [1 ,\delta^{-1}] $. Then for any $r \in [1 ,\delta^{-1}] $ there exists $ r_{0} \in \mathcal{T}$ such that $|r-r_{0}| < |y|^{\nu}$.
Moreover, by triangular inequality we have
 $$ |A_{r}f - \tau_{y} A_{r} f | \leq 
  |A_{r} f - A_{r_{0}} f | + | A_{r_{0}} f - \tau_{y} A_{r_{0}} f | + | \tau_{y} A_{r_{0}} f - \tau_{y} A_{r} f |  $$
  which gives
$$\| \sup _{r\in [1,\delta^{-1}]} |  A _{r} f - \tau _y A _{r} f | \|_{q} \leq \big\| \sup _{ \substack{ s,r\in [1, \delta^{-1}]\\  | s-r|< \epsilon  }}
 | A _{r} f - A_s f | \big\|_ {q} + \| \sup _{t\in \mathcal T} \| A _{r} f - \tau _y A _{r} f |\| _ {q}.
$$ 
The final result follows  by using  Proposition \ref{prop:mod}  and the inequality \eqref{eq: net}.
\end{proof}

We need a different version of the inequality in Theorem $\ref{thm:cont}$ when the interval $1\le r \le \delta^{-1}$ is replaced by $\delta^{k+1} \le r \le \delta^k $. First we have a slight generalisation of Lemma \ref{lem:dilat}.

\begin{lem}
\label{lem: scal}
Let $r=\ell r'$ for some fixed $\ell$. Then we have $A_rf=\delta_{\ell^{-1}}A_{r'}\delta_{\ell}f $. 
\end{lem}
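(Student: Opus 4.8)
The plan is to deduce this from Lemma \ref{lem:dilat}, which is exactly the special case $r'=1$, $\ell=r$, by exploiting the group structure of the non-isotropic dilations. First I would record the elementary identities $\delta_s\delta_{s'}=\delta_{ss'}$ and $\delta_s^{-1}=\delta_{1/s}$ for $s,s'>0$, which are immediate from the definition $\delta_s\varphi(w,t)=\varphi(sw,s^2t)$; in particular $\delta_s$ is an automorphism of $\H^n$, so these identities are consistent with the convolution structure.

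Next, writing $r=\ell r'$ and applying Lemma \ref{lem:dilat} with dilation parameter $r=\ell r'$, I would rewrite
$$
A_rf=\delta_r^{-1}A_1\delta_rf=\delta_{(\ell r')^{-1}}A_1\delta_{\ell r'}f=\delta_{\ell^{-1}}\,\delta_{(r')^{-1}}A_1\delta_{r'}\,\delta_\ell f,
$$
using $\delta_{\ell r'}=\delta_\ell\delta_{r'}$ and $\delta_{(\ell r')^{-1}}=\delta_{\ell^{-1}}\delta_{(r')^{-1}}$. Then, invoking Lemma \ref{lem:dilat} once more, this time with parameter $r'$, I would replace the middle block $\delta_{(r')^{-1}}A_1\delta_{r'}$ by $A_{r'}$, which yields $A_rf=\delta_{\ell^{-1}}A_{r'}\delta_\ell f$, as claimed. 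Alternatively, one can give a direct verification from the spectral formula \eqref{eq:expression}, repeating verbatim the change-of-variables computation in the proof of Lemma \ref{lem:dilat} but keeping $r'$ in place of $1$ throughout.

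I do not expect any genuine obstacle here: the statement is a routine scaling identity. The only points requiring care are the bookkeeping of dilation exponents, since the central variable scales quadratically (so $\delta_s^{-1}=\delta_{1/s}$ rather than some other normalization), and the order in which the composition rule is applied, because $A_{r'}$ does not commute with $\delta_\ell$. This lemma will then be combined with Theorem \ref{thm:cont} to transfer the continuity property of the scaled full spherical means from the interval $[1,\delta^{-1}]$ to an arbitrary interval $[\delta^{k+1},\delta^k]$, $k\in\Z$.
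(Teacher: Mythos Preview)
Your proof is correct. You deduce the identity from Lemma \ref{lem:dilat} together with the group law $\delta_s\delta_{s'}=\delta_{ss'}$ for the non-isotropic dilations, whereas the paper proves it by a direct change-of-variables computation starting from the integral definition \eqref{eq:defin} of $A_rf$. Your route is shorter and cleaner, since Lemma \ref{lem:dilat} already encodes the relevant scaling; the paper's computation is self-contained but essentially repeats the work done in Lemma \ref{lem:dilat}. Both arguments are elementary and valid.
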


\begin{proof}
\begin{align*}
A_rf(z,t)&=\int_{|w|=r}f\big(z-w,t-\frac 12 \Im  z\cdot\overline{w}\big)\,d\mu_r(w)\\
&=\int_{|w|=1}f\big(z-rw,t-\frac 12  \Im rz\cdot\overline{w}\big)\,d\mu_1(w)\\
&=\int_{|w|=1}f\big(z-\ell r'w,t-\frac 12  \Im \ell r'z\cdot\overline{w}\big)\,d\mu_1(w)\\
&=\int_{|w|=1}\delta_{\ell}f\big(\ell^{-1}z-r'w,\ell^{-2}t-\frac 12  \Im\ell^{-1}r'z\cdot\overline{w}\big)\,d\mu_1(w)\\
&= \delta_{-\ell}A_{r'}\delta_{\ell}f(z,t).
\end{align*}
\end{proof}
Finally, we can deduce the following.

\begin{cor}
\label{cor:contFull}
Assume that $n\ge2$. For all $ \big(\frac1p, \frac1q\big) $ in the interior of the triangle joining the points $(0,0)$, $\big(\frac{2n-1}{2n},\frac{2n-1}{2n} \big)$ and $\big(\frac{3n+1}{3n+7},\frac{6}{3n+7}\big)$,
we have for some  $\nu  = \nu  (n,p,q)>0$ and $ \big|  \frac{y}{\delta^{k+1}}\big|<1$,
$$
\| \sup _{\delta^{k+1} \leq r \leq \delta^{k}} |  A_{r} f -  A_{r} \tau _y f  |
\|_q 
\lesssim  \big| \frac{y}{\delta^{k+1}}\big|^{\nu }   \delta^{(k+1)(2n+2)(\frac 1q -\frac 1p)}\| f\|_p.
$$
\end{cor}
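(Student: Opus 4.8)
The plan is to reduce the estimate over the interval $[\delta^{k+1},\delta^{k}]$ to the one over $[1,\delta^{-1}]$ already established in Theorem~\ref{thm:cont}, by exploiting the scaling relation of Lemma~\ref{lem: scal}. Put $\ell=\delta^{k+1}$. Every $t\in[\delta^{k+1},\delta^{k}]$ can be written as $t=\ell t'$ with $t'\in[1,\delta^{-1}]$, and Lemma~\ref{lem: scal} gives $A_tf=\delta_{\ell}^{-1}A_{t'}\delta_{\ell}f$. Since $\delta_{\ell}\colon\H^n\to\H^n$ is an automorphism we also have $\delta_{\ell}(\tau_yf)=\tau_{\delta_{\ell}^{-1}y}(\delta_{\ell}f)$, the same observation used in the proof of Corollary~\ref{cor:dilat2}. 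Hence
\[
A_tf-A_t\tau_yf=\delta_{\ell}^{-1}\bigl(A_{t'}(\delta_{\ell}f)-A_{t'}\tau_{\delta_{\ell}^{-1}y}(\delta_{\ell}f)\bigr).
\]

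Next I would take the supremum over $t$ --- equivalently over $t'\in[1,\delta^{-1}]$ --- using that $\delta_{\ell}^{-1}$ commutes with the supremum and the absolute value, together with the dilation identity $\|\delta_{\ell}^{-1}g\|_{q}=\ell^{(2n+2)/q}\|g\|_{q}$, to obtain
\[
\Bigl\|\sup_{\delta^{k+1}\le t\le\delta^{k}}|A_tf-A_t\tau_yf|\Bigr\|_{q}=\ell^{(2n+2)/q}\Bigl\|\sup_{1\le t'\le\delta^{-1}}\bigl|A_{t'}h-A_{t'}\tau_{y'}h\bigr|\Bigr\|_{q},
\]
where $h=\delta_{\ell}f$ and $y'=\delta_{\ell}^{-1}y$. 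The Koranyi norm is homogeneous of degree one under the non-isotropic dilations, so $|y'|=\ell^{-1}|y|=|y|/\delta^{k+1}$; in particular the hypothesis $|y/\delta^{k+1}|<1$ is exactly the condition $|y'|<1$ needed in Theorem~\ref{thm:cont}. Applying that theorem, which holds precisely on the interior of $\mathbf{F}_n'$ --- the triangle appearing in the statement --- yields
\[
\Bigl\|\sup_{1\le t'\le\delta^{-1}}\bigl|A_{t'}h-A_{t'}\tau_{y'}h\bigr|\Bigr\|_{q}\lesssim|y'|^{\eta}\|h\|_{p}=\Bigl|\frac{y}{\delta^{k+1}}\Bigr|^{\eta}\ell^{-(2n+2)/p}\|f\|_{p},
\]
using once more $\|\delta_{\ell}f\|_{p}=\ell^{-(2n+2)/p}\|f\|_{p}$.

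Multiplying the last two displays and recalling $\ell=\delta^{k+1}$ produces the factor $\ell^{(2n+2)(1/q-1/p)}=\delta^{(k+1)(2n+2)(\frac1q-\frac1p)}$ together with $|y/\delta^{k+1}|^{\eta}$, which is exactly the claimed inequality. There is no genuine analytic obstacle here: the only thing requiring care is the bookkeeping of the powers of $\ell$ coming from the two dilations, which enter $L^{p}$ and $L^{q}$ with opposite signs, and checking that the substitution $t=\ell t'$ is compatible with taking the supremum. All the substantive content --- the $L^{p}$-improving estimate, the $L^{2}$ square-function bound, and the net argument --- has already been absorbed into Theorem~\ref{thm:cont} and Corollary~\ref{cor:dilat2}.
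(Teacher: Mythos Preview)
Your proof is correct and follows essentially the same approach as the paper's own proof: both write $t=\ell t'$ with $\ell=\delta^{k+1}$, invoke Lemma~\ref{lem: scal} and the identity $\delta_{\ell}(\tau_y f)=\tau_{\delta_{\ell}^{-1}y}(\delta_{\ell}f)$, and then apply Theorem~\ref{thm:cont} together with the $L^p$ scaling of the dilations. If anything, your write-up is slightly more explicit in justifying $|y'|=|y|/\ell$ via the degree-one homogeneity of the Koranyi norm.
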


\begin{proof}

Let $r\in [\delta^{k+1},\delta^k]$. We have $r=\delta^{k+1}r'$ for some $r'\in [1,\delta^{-1}]$. To avoid confusion in the notations, we denote $\ell=\delta^{k+1}$. 
From Lemma \ref{lem: scal} we get
$$ 
A_{r}f- A_{r}\tau_y = \delta_{\ell^{-1}}A_{r'}\delta_{\ell}f - \delta_{\ell^{-1}}A_{r'}\delta_{\ell}(\tau_y f).
$$
Also we observe that $ \delta_{\ell}(\tau_yf)=\tau_{\ell^{-1}y}(\delta_{\ell}f) $. Hence
$$ 
A_{r}f- A_{r} \tau_y = \delta_{\ell^{-1}}A_{r'}\delta_{\ell}f -\delta_{\ell^{-1}}A_{r'}\tau_{\ell^{-1}y}(\delta_{\ell}f).
$$
This gives, for $ \big(\frac1p, \frac1q\big) \in \mathbf{F}_n'$,
\begin{align*}
\| \sup_{\delta^{k+1} \le r\le \delta^{k}} |A_{r}f- A_{r} \tau_y| \|_q &= \ell^{(2n+2)\frac 1q}\| \sup_{1 \le r'\le \delta^{-1}}|A_{r'}\delta_{\ell}f - A_{r'}\tau_{\ell^{-1}y} \delta_{\ell} f| \|_q \\
&\lesssim \ell^{(2n+2)\frac 1q} \big|\frac{y}{\ell} \big|^{\nu } \| \delta_{\ell}f \|_p  \\
& \lesssim  \ell^{(2n+2)(\frac 1q - \frac 1p)}  \big|\frac{y}{\ell} \big|^{\nu } \| f \|_p.
\end{align*}
Above, the first inequality follows from Theorem \ref{thm:cont} and for the second inequality we have used the fact that $\| \delta_{\ell}f \|_p= \ell^{-\frac{2n+2}{p}} \|f \|_p $. The corollary is proved.

\end{proof}

\subsection{Sparse bounds and boundedness properties}
\label{sec:sparseF}

The strategy to get Theorem \ref{thm:sparseF} is the same as in the lacunary case, now making use of Corollary \ref{cor:contFull}. We only provide the details of the main differences. First, a lemma analogous to Lemma \ref{lem:Hn} holds, and the proof is exactly the same.

\begin{lem}
\label{lem:Hn-1}
Let $0<\delta< \frac{1}{96}$. For $Q$ with $\ell(Q)=\delta^k$, $k\in \Z$, we consider
$$
\mathbb{V}_{Q}=\{P\in \mathcal{D}^1_{k+3}: B(z_{P},\delta^{k+1})\subseteq Q\}
$$
and define 
$$
\widetilde M_{Q}f:=\sup_{\delta ^{k+3} \leq r <\delta^{k+2}}A_{r}(f{\bf{1}}_{V_Q})
$$ 
where $ V_{Q}=\cup_{P\in \mathbb{V}_{Q}}P.$ 
Then  
\begin{equation*}
 \sup _{\delta ^{k+3} \leq r <\delta ^{k+2}} A _{r}f 
 \leq  \sum_{\alpha=1} ^{N} \sum_{Q \in \mathcal D_k^{\alpha}}   \widetilde M_Q f.
\end{equation*} 
\end{lem}

It suffices to prove the sparse bound for each of the maximal operators 
\begin{equation} 
M _{\mathcal D^{\alpha}} f := 
\sup _{Q\in \mathcal D^{\alpha}} \widetilde M_Q f , \qquad  1\leq \alpha \leq  N.  
\end{equation}
We fix a grid, and write $ \mathcal D = \mathcal D^\alpha$. With the same linearisation argument as in Section \ref{sec:sparse}, by denoting $\mathcal{D}(Q_0)$ the collection of all dyadic subcubes of $Q_0\in \mathcal{D}$, we define 
$$
E_Q:=\big\{x\in Q : \widetilde M_Qf(x)\ge \frac{1}{2} \sup_{P\in\mathcal{D}(Q_0)} \widetilde M_Pf(x)\big\}
$$ 
for $Q\in\mathcal{D}(Q_0)$. Note that for any $ x \in \H^n $ there exists  a $Q \in\mathcal{D}(Q_0)$ such that  
$$ 
\widetilde M_Qf(x)\ge \frac{1}{2} \sup_{P\in\mathcal{D}(Q_0)} \widetilde M_Pf(x) 
$$  
and hence $ x \in E_Q$.  Now we define $B_Q=E_Q\setminus \cup_{Q'\supseteq Q}E_{Q'}$, so that $\{B_Q: Q\in\mathcal{D}(Q_0)\}$ are disjoint and moreover $\cup_{Q\in\mathcal{D}(Q_0)}B_Q=\cup_{Q\in\mathcal{D}(Q_0)}E_Q$. Then it follows that
$$
\notag\langle\sup_{Q\in\mathcal{D}(Q_0)}\widetilde M_Qf,g\rangle \le 2 \sum_{Q\in\mathcal{D}(Q_0)}\langle \widetilde M_Qf_1,f_2\mathbf{1}_{B_Q}\rangle.
$$
Thus, defining  $(f_2)_Q:=f_2\mathbf{1}_{B_Q}$ we deal with $\sum_{Q\in\mathcal{D}(Q_0)}\langle \widetilde M_Qf_1,(f_2)_Q\rangle$.

\begin{lem}
 Let $1<p,q<\infty$ be such that $\big(\frac1p,\frac1q\big)$ in the interior of the triangle joining the points $(0,1)$, $\big(\frac{2n-1}{2n},\frac{1}{2n} \big)$ and $\big(\frac{3n+1}{3n+7},\frac{3n+1}{3n+7}\big)$. Let $f_1=\mathbf{1}_{F}$ and let $ f_2 $ be any bounded function supported in $ Q_0$. Let $C_0>1$ be a constant and let $\mathcal{Q}$ be a collection of dyadic subcubes of $Q_0\in \mathcal{D}$ for which the following holds
\begin{equation}
\sup_{Q'\in \mathcal{Q}}\sup_{Q: Q'\subset Q\subset Q_0}\frac{\langle f_1\rangle_{Q,p}}{\langle f_1\rangle_{Q_0,p}}<C_0.
\end{equation}
Then there holds 
$$
\sum_{Q\in \mathcal{Q}}\langle \widetilde M_Qf_1,(f_2)_Q\rangle\lesssim |Q_0|\langle f_1\rangle_{Q_0,p}\langle f_2\rangle_{Q_0,q}.
$$
\end{lem}

\begin{proof}

We perform a Calder\'on--Zygmund decomposition of $f_1=g_1+b_1$ at height $2C_0\langle f_1 \rangle_{Q_0,p}$ as in \eqref{eq:decoCZ}, where the bad cubes $\mathcal{B}$ result from the collection of (maximal) dyadic subcubes of $Q_0$  so that
\begin{equation}
\label{eq:stopp}
\langle f_1\rangle_{Q,p}>2C_0\langle f_1 \rangle_{Q_0,p}.
\end{equation}
We aim to bound the bilinear form 
$$
\big|\sum_{Q\in \mathcal{Q}}\langle \widetilde M_Qf_1,(f_2)_Q\rangle\big|
$$
The term carrying the good function $g_1$ is bounded analogously as in Lemma \ref{lem:key}. 
For the term involving $b_1$, we have, for any $Q\in \mathcal{Q}$ with $\ell(Q)=\delta^s $,
\begin{equation*}
\big|\sum_{Q\in \mathcal{Q}}\langle \widetilde M_Qb_1,(f_2)_Q\rangle\big|\le \sum_{k=1}^{\infty}\sum_{Q\in \mathcal{Q}}|\langle \widetilde M_QB_{1,s+k},(f_2)_Q\rangle|.
\end{equation*}
As shown in \cite[Lemma 3.4]{Lacey}, we can replace $\widetilde M_Q \phi(x)$ by 
$$L_{Q}\phi (x) := A_{r_{Q}(x)} \phi(x) \mathbf{1}_{V_Q}(x) 
$$ 
where $r_{Q} : Q \mapsto [\delta ^{s+3}, \delta^{s+2}]$ is a measurable function. 

By making use of  the mean zero property of $b_1$, we see that
\begin{align*}
&|\langle L_QB_{1,s+j},(f_2)_Q\rangle|=|\langle B_{1,s+j},L_Q^*(f_2)_Q\rangle|\\
&= \sum_{P\in B(s+j)}\big|\int_P L_Q^*(f_2)_Q(x)B_{1,s+j}(x)\,dx\big|\\
&\le  \sum_{P\in B(s+j)}\frac{1}{|P|}\Big|\int_P\int_P\big[L_Q^*(f_2)_Q(x)-L_Q^*(f_2)_Q(x')\big]B_{1,s+j}(x)\,dx\,dx'\Big|\\
&\le \sum_{P\in B(s+j)}\frac{1}{|P|}\Big|\int_{P^{-1}P}\int_P\big[L_Q^*(f_2)_Q(x)-\tau_yL_Q^*(f_2)_Q(x)\big]B_{1,s+j}(x)\,dx\,dy\Big|\\
&\lesssim \frac{1}{|P_0|}\int_{P_0}\Big|\int_Q(f_2)_Q(x)(L_Q-L_Q\tau_{-y})B_{1,s+j}(x)\,dx\Big|\,dy,
\end{align*}
where $P_0=B(0,\delta^{s+j-1})$ and we used that $P^{-1}x\subset P^{-1}P\subset P_0$. 
Now 
$$
|L_{Q}f(x)-L_{Q}\tau_{-y}f(x)| \leq   \sup _{\delta^{s+3}\leq r \leq \delta^{s+2}}|  A_r f(x) -  A_r\tau _{-y} f(x) |.  
$$
Then, under the assumptions of $(1/p,1/q)$ in Corollary \ref{cor:contFull}, it follows
$$
\| L_{Q}f-L_{Q}\tau_{-y}f\|_{q'} \leq \Big|\frac{y}{\delta^{s+3}}\Big|^{\nu}\delta^{(j+2)(n+2)(\frac{1}{q'} - \frac1p)}\| f \|_p \quad \text{ for all } \Big(\frac{1}{p}, \frac{1}{q'}\Big) \in F'_n.
$$
So for all $(\frac{1}{p}, \frac{1}{q}) \in F_n$,
\begin{align*}
&|\langle L_QB_{1,q-j},(f_2)_Q\rangle|\\
&\lesssim \frac{1}{|P_0|}\int_{P_0} \Big|\frac{y}{\delta^{s+3}}\Big|^{\nu}\delta^{(j+2)(n+2)(\frac 1{q'} - \frac1p)}\| B_{1,q-j} 1_Q \|_p \|(f_2)_Q \|_q\,dy \\
&=\delta^{j\nu}|Q|\langle B_{1,q-j}\mathbf{1}_Q\rangle_{Q,p}\langle (f_2)_Q\rangle_{Q,q}.
\end{align*} 
Finally, recall the inequality \eqref{eq:claimII}
\begin{equation*}
\sum_{Q\in \mathcal{Q}}|Q|\langle B_{1,s+j}\mathbf{1}_Q\rangle_{Q,p}\langle f_2\mathbf{1}_{B_Q}\rangle_{Q,q}\lesssim |Q_0|\langle f_1\rangle_{Q_0,p}\langle f_2\rangle_{Q_0,q},
\end{equation*}
for all $j\ge1$ and for all $1<p,q<\infty$ such that $\big(\frac1p,\frac1q\big)$ are in the interior of the triangle joining the points $(0,1), (1,0)$ and $(1,1)$, including the segment joining $(0,1)$ and $(1,0)$, excluding the endpoints (observe that this triangle contains the triangle given by the assumptions of the lemma). 
This concludes the proof of the lemma.
\end{proof}

The proof of Theorem \ref{thm:sparseF} now follows the same steps of the proof of Theorem \ref{thm:mainSH} with $A_Qf$ replaced by $\widetilde{M}_Qf$. We omit the details.

From the sparse domination results, analogously as in the lacunary case, a number of weighted estimates can be immediately deduced.
\begin{cor}
\label{cor:weightF}
Let $n\ge2$ and define
$$
\frac{1}{\phi(1/p_0)}=\begin{cases}1-\frac{1}{p_0}\frac{6}{3n+1}, \quad 0<\frac1p_0\le \frac{3n+1}{3n+7},\\
\frac{6n^2-n-7}{9n-7}\Big(\frac{2n-1}{2n}-\frac1p_0\Big)+\frac{1}{2n}, \quad \frac{3n+1}{3n+7}<\frac1p_0<\frac{2n-1}{2n}.
\end{cases}
$$
Then $M_{\operatorname{lac}}$ is bounded on $L^p(w)$ for $w\in A_{p/p_0}\cap \operatorname{RH}_{(\phi(1/p_0)'/p)'}$ and all $1<p_0<p<(\phi(1/p_0))'$.
\end{cor}

\begin{center}{\bf Acknowledgments}
\end{center}
We are grateful to the anonymous referee for his/her very careful reading of the original manuscript and constructive comments that have contributed to improve the presentation of the paper.

This work was mainly carried out when the first, second and the fourth author were visiting the third author in  Bilbao. They wish to thank BCAM in general and Luz Roncal in particular for the warm hospitality they enjoyed during their visit. The last author fondly remembers the daily shots of cortado as well as the changing colours of The Puppy! 

All the four authors are thankful to Michael Lacey for answering  several queries and offering clarifications regarding his work \cite{Lacey}. They would also like to thank Kangwei Li for fruitful discussions and David Beltran for a careful reading of the manuscript and his helpful suggestions.

The first, second, and third-named authors were supported by 2017 Leonardo grant for Researchers and Cultural Creators, BBVA Foundation. The first-named author was also supported by Inspire Faculty Fellowship [DST/INSPIRE/04/2016/000776]. The fourth author was visiting Basque Center for Applied Mathematics through a Visiting Fellow programme.
The third and fourth-named author were also supported 
by the Basque Government through
BERC 2018--2021 program, by Spanish Ministry of Science, Innovation and Universities through BCAM Severo Ochoa accreditation SEV-2017-2018 and the project MTM2017-82160-C2-1-P funded by AEI/FEDER, UE. The third author also acknowledges the RyC project RYC2018-025477-I and IKERBASQUE.


\begin{thebibliography}{10}

 

\bibitem{BC}
{\scshape Beltran, David; Cladek, Laura.}
{ Sparse bounds for pseudodifferential operators}, 
\textit{J. Anal. Math.} 
\textbf{140} (2020), no.1, 89--116.

\bibitem{BGHS}
 {\scshape Beltran, David; Guo, Shaoming; Hickman, Jonathan; Seeger, Andreas.}
{The circular maximal operator on Heisenberg radial functions},
to appear in Ann. Sc. Norm. Super. Pisa Cl. Sci..

\bibitem{BFP}
 {\scshape Bernicot, Fr\'ed\'eric; Frey, Dorothee; Petermichl, Stefanie}
{Sharp weighted norm estimates beyond Calder\'{o}n-Zygmund theory},
\textit{ Anal. PDE } 
\textbf{9} (2016), no. 5, 1079--1113.


\bibitem{Bourgain} 
 {\scshape Bourgain, Jean.}
{Averages in the plane over convex curves and maximal operators},
\textit{J. Anal. Math.}
\textbf{47} (1986), 69--85.


\bibitem{C} 
 {\scshape Calder\'on, Calixto P.}
{Lacunary spherical means},
\textit{Illinois J. Math.}
\textbf{23} (1979), 476--484.


\bibitem{Cowling}
 {\scshape Cowling, Michael.}
{On Littlewood--Paley--Stein theory},
\textit{Rend. Circ. Mat. Palermo (2)} (1981), suppl. 1, 21--55.


\bibitem{CUMP}
 {\scshape Cruz--Uribe, David; Martell, Jos\'e M.; P\'erez, Carlos.}
{Sharp weighted estimates for classical operators},
\textit{Adv. Math.} \textbf{229} (2012), 408--441.


\bibitem{F} 
 {\scshape  Folland, Gerald B.}
Harmonic analysis in phase phase, 
Annals of Mathematics Studies, \textbf{122}. \textit{Princeton University Press}, Princeton, NJ, 1989.


\bibitem{GT}
 {\scshape Ganguly, Pritam; Thangavelu, Sundaram.}
{On the lacunary spherical maximal function on the Heisenberg group},
\textit{J. Funct. Anal.} \textbf{280} (2021), no.3, 108832, 32pp.


\bibitem{GM1} 
 {\scshape  Grafakos, Loukas.}
\textit{Classical Fourier analysis. Third edition.} Graduate Texts in Mathematics, \textbf{250}. Springer, New York, 2014.


\bibitem{HK} 
 {\scshape Hyt\"onen, Tuomas; Kairema, Anna.}
{Systems of dyadic cubes in a doubling metric space},
\emph{Colloq. Math.}
\textbf{126} (2012), 1--33.



\bibitem{Lacey} 
 {\scshape Lacey, Michael T.}
Sparse bounds for spherical maximal functions, 
\textit{J. Anal. Math.} 
\textbf{139} (2019), 613--635.



\bibitem{M} 
 {\scshape Markett, C.} 
 {Mean Ces\`aro summability of Laguerre expansions and norm estimates with shifted parameter}, \emph{Anal. Math.} \textbf{8} (1982), no. 1, 19--37.


\bibitem{Mu} 
 {\scshape Muckenhoupt, Benjamin.} 
 {Mean convergence of Hermite and Laguerre series. II} \emph{Trans. Amer. Math. Soc.} \textbf{147} (1970), 433--460.


\bibitem{MS} 
 {\scshape M\"uller, Detlef; Seeger, Andreas.} 
 {Singular spherical maximal operators on a class of two step nilpotent Lie groups},
\emph{Israel J. Math.} \textbf{141} (2004), 315--340.


\bibitem{NaT} 
 {\scshape Narayanan, E. K.; Thangavelu, Sundaram.} 
{An optimal theorem for the spherical maximal operator on the Heisenberg group}, 
\textit{Israel J. Math.}
\textbf{144} (2004), 211--219.


\bibitem{NeT} 
 {\scshape Nevo, Amos; Thangavelu, Sundaram.} 
{Pointwise ergodic theorems for radial averages on the Heisenberg group,}
\textit{Adv. Math.} \textbf{127} (1997), 307--339.


\bibitem{PBM} 
 {\scshape  Prudnikov, A. P; Brychkov, Y. A.; Marichev, O. I.} 
Integrals and Series. Vol. 2. Special Functions.Translated from the Russian by N. M. Queen, 
\textit{Gordon and Breach Science Publishers}, New York, 1986.


\bibitem{Stein-I}
 {\scshape Stein, Elias M.}
Interpolation of linear operators,
\textit{Trans. Amer. Math. Soc.} \textbf{83} (1956), 482--492.


\bibitem{Stein}
 {\scshape Stein, Elias M.}
Maximal functions. I. Spherical means,
\emph{Proc. Nat. Acad. Sci. U.S.A.} \textbf{73} (1976), 2174--2175.


\bibitem{SW} 
{\scshape Stein, Elias M.; Weiss, Guido.}
Introduction to Fourier analysis in Euclidean spaces,  \textit{Princeton University Press}, Princeton, N. J. 1971. 


\bibitem{S} 
 {\scshape  Strichartz, Robert S.} 
 {Convolutions with kernels having singularities on a sphere}, \emph{Trans. Amer. Math. Soc.} \textbf{148} (1970), 461--471.


\bibitem{VSS} 
 {\scshape  Sunde, V. S.} 
Operators on Hilbert space, Texts and Readings in Mathematics, {\bf 71}.  \emph{Hindustan Book Agency}, New Delhi, 2015.

\bibitem{Sz} 
 {\scshape   Szeg\"o, Gabor.} 
Orthogonal polynomials. Fourth edition, American Mathematical Society, Colloquium Publications, Vol. XXIII. \emph{American Mathematical Society}, Providence, R. I., 1975.


\bibitem{TRMI} 
 {\scshape Thangavelu, Sundaram.} 
 {Spherical means on the Heisenberg group and a restriction theorem for the symplectic Fourier transform}, \emph{Rev. Mat. Iberoamericana} \textbf{7} (1991), 135--155.


\bibitem{T}   
 {\scshape   Thangavelu, Sundaram.} Lectures on Hermite and Laguerre expansions, Mathematical Notes {\bf 42}. \emph{Princeton University Press}, Princeton, NJ, 1993.


\bibitem{STH} 
 {\scshape   Thangavelu, Sundaram.} Harmonic Analysis on the Heisenberg group, Progress in Mathematics {\bf 159}. \emph{Birkh\"auser}, Boston, MA, 1998.


\bibitem{TK} 
 {\scshape  Thangavelu, Sundaram.} 
 {Local ergodic theorems for $K$-spherical averages on the Heisenberg group}, \emph{Math. Z.} \textbf{234} (2000), no. 2, 291--312.


\end{thebibliography}
\end{document}